\documentclass[11pt,reqno]{amsart}

\usepackage[utf8]{inputenc}
\usepackage{amsmath,amssymb,amsthm}
\usepackage{mathrsfs}
\usepackage{booktabs}
\usepackage{enumitem}
\usepackage{graphicx}
\usepackage{tikz}
\usetikzlibrary{arrows.meta,positioning,calc}
\usepackage{adjustbox}
\usepackage{nicefrac}
\usepackage[margin=1.15in]{geometry}
\usepackage{url}
\usepackage{xurl}
\usepackage{amscd}
\usepackage[sc]{mathpazo}
\usepackage[pagebackref=true]{hyperref}
\hypersetup{hidelinks,
 pdftitle={Integer knot invariants: inequalities, computations, and open problems},
 pdfauthor={Michal Jablonowski},
 pdfsubject={Knot invariants and inequalities between them},
 pdfkeywords={knot invariants, skein tree depth, interval propagation}}

\usetikzlibrary{
	arrows.meta,
	calc,
	positioning,
	fit,
	backgrounds,
	shapes.geometric,
	bending
}

\definecolor{PosterBlue}{HTML}{123C69}
\definecolor{EdgeBlue}{HTML}{274C77}
\definecolor{NodeFill}{HTML}{FFFFFF}
\definecolor{IneqRed}{HTML}{B11226}
\definecolor{AccentGreen}{HTML}{2A7F62}
\definecolor{LightGray}{HTML}{DDF2D1}
\definecolor{Sunflower}{HTML}{EDF7FF}
\definecolor{Cream}{HTML}{FFFAD3}

\tikzset{
	invariant/.style={
		draw=PosterBlue!75!black,
		fill=NodeFill,
		rounded corners=2.6pt,
		minimum height=8.8mm,
		inner xsep=4.2pt,
		inner ysep=2.6pt,
		font=\large\bfseries,
		align=center,
		rotate=-90,
		transform shape
	},
	poly/.style={
		invariant,
		fill=LightGray,
		draw=AccentGreen!70!black
	},
	homol/.style={
		invariant,
		fill=Sunflower,
		draw=Sunflower!80!black
	},
	4D/.style={
		invariant,
		fill=Cream,
		draw=Cream!80!black
	},
	edgefwd/.style={
		draw=EdgeBlue,
		line width=0.9pt,
		-{Latex[length=3mm,width=2mm]}
	},
	edgeback/.style={
		draw=EdgeBlue,
		line width=0.9pt,
		{Latex[length=3mm,width=2mm]}-
	},
	edgebackwhite/.style={
		draw=NodeFill,
		line width=5pt
	},
	redlabel/.style={
		font=\normalsize,
		text=IneqRed,
		fill=white,
		inner sep=2pt,
		rotate=-90,
		transform shape
	},
}

\newcommand{\VLabel}[1]{{\small\ensuremath{#1}}}
\newcommand{\ELabel}[1]{#1}

\theoremstyle{plain}
\newtheorem{theorem}{Theorem}[section]
\newtheorem{proposition}[theorem]{Proposition}
\newtheorem{lemma}[theorem]{Lemma}
\newtheorem{corollary}[theorem]{Corollary}
\newtheorem{definition}[theorem]{Definition}
\theoremstyle{definition}
\newtheorem{convention}[theorem]{Convention}
\newtheorem{remark}[theorem]{Remark}

\newcommand{\kn}[3]{#1\mathrm{#2}_{#3}}

\newcommand{\Z}{\mathbb{Z}}
\DeclareMathOperator{\sgn}{sign}
\DeclareMathOperator{\Tors}{Tors}
\DeclareMathOperator{\Ord}{Ord}
\newcommand{\gds}{g_{ds}}
\newcommand{\gfree}{g_{f}}
\newcommand{\gcan}{g_{c}}
\newcommand{\gr}{g_{r}}
\newcommand{\gtop}{g_4^{\mathrm{top}}}
\newcommand{\cl}{\mathrm{cl}}
\newcommand{\td}{\mathrm{td}}
\newcommand{\tr}{\mathrm{tr}}
\newcommand{\br}{\mathrm{br}}
\newcommand{\ub}{u_b}
\newcommand{\ulb}{u_{lb}}
\newcommand{\us}{u_s}
\newcommand{\uc}{u_c}
\newcommand{\urw}{u_r^{*}}
\newcommand{\spD}{\mathrm{sp}\Delta_t}
\newcommand{\spV}{\mathrm{sp}V_t}
\newcommand{\spPa}{\mathrm{sp}P_a}
\newcommand{\spFa}{\mathrm{sp}F_a}
\newcommand{\degPz}{\deg P_z}

\begin{document}

\title[Integer knot invariants]{Integer knot invariants: inequalities,
computations,\\ and open problems}

\author{Micha\l{} Jab\l{}onowski}
\address{Institute of Mathematics, Faculty of Mathematics, Physics and
Informatics, University of Gda\'nsk, 80-308 Gda\'nsk, Poland}
\email{michal.jablonowski@gmail.com}

\date{September 15, 2026}

\subjclass[2020]{57K10, 57K18, 57K40}
\keywords{knot invariants, unknotting number, doubly slice genus, slice--ribbon
conjecture, interval propagation, skein tree depth}

\begin{abstract}
We study how inequalities among integer knot invariants combine to yield new exact values and sharper constraints. A directed network of forty-seven established inequalities among thirty-three invariants is combined with parity conditions, alternating evaluations, known ribbon status, and interval propagation for prime knots through thirteen crossings. We prove that the propagation is sound, terminating, and order independent. Under a stated source-bound hypothesis, the resulting database reports, as improvements over the source intervals, exact unknotting numbers for thirty-six thirteen-crossing knots and exact doubly slice genera for seventy-two thirteen-crossing knots. Both lists are given explicitly, and we distinguish rows derived only from genus data from those inheriting an imported unknotting bound. Our main theoretical result is a leading-coefficient obstruction to the depth of ordinary oriented binary skein trees. Its specialisation to the Conway polynomial yields two simpler tests using columns already present: a leading-coefficient test and, as its weakest case, a degree-drop test. Combined with previously computed upper bounds, the obstruction determines eleven skein depths exactly; two of these are not detected by the Conway specialisation. We also formulate nine candidate comparisons not implied by the network, prove family cases and structural restrictions for eight of them, and show that the ninth is equivalent to the smooth Slice--Ribbon Conjecture.
\end{abstract}

\maketitle

\section{Introduction}

Integer-valued knot invariants encode different aspects of the same knot, but
most are difficult to determine individually and many are known only through
upper and lower bounds. Their usefulness therefore increases when inequalities
between them are treated as a system rather than one at a time. Diagrammatic
complexity, Seifert-surface invariants, four-dimensional genera, concordance
invariants, knot homologies, and polynomial degrees all contribute constraints
of different kinds. Building on the earlier inequality network in
\cite{Jab26}, we organise a selected collection of these constraints and study
what can be deduced from their interaction. The displayed arrows in this paper
are established results from the literature; the new mathematical arguments
concern the skein-depth obstruction and the candidate comparisons, while the
new computational component concerns the systematic propagation of interval
data.

The first contribution is a directed, acyclic graph of $47$ established
inequalities among $33$ normalised integer quantities. The normalisation is
chosen so that an arrow has the uniform meaning that its source is at least its
target. The graph mixes crossing-type invariants, classical and canonical
genera, four-dimensional and concordance quantities, Floer- and
Khovanov-theoretic invariants, and polynomial degrees and spreads. Several
arrows are intentionally weaker than sharper published statements because the
graph is designed around a fixed vertex set; the sharper forms are recorded
separately in Subsection~\ref{sec:next}. We also distinguish carefully between
the smooth and locally flat categories, in particular when Alexander-polynomial
bounds are compared with four-dimensional genera.

The second contribution is a reproducible interval-propagation framework for
combining these inequalities with source data. Starting from KnotInfo and the
additional sources cited below, we use the $47$ arrows together with parity
constraints, three non-triviality bounds, the classical alternating-knot
evaluation $\gcan=\gfree=g$, and exact zero values on knots known to be
ribbon. For each knot, lower bounds propagate upward along an inequality and
upper bounds downward. Under the stated hypothesis that every imported interval contains the true
value, we prove that this procedure is sound, that only finitely many strict
endpoint changes can occur, and that every fair schedule reaches the same fixed
point. The order-independence argument is an instance of
standard monotone chaotic iteration, which we cite explicitly rather than claim
as a new general fixed-point theorem.

Applied to prime knots through thirteen crossings, the resulting database
\textsf{NewDB5} contains $12965$ rows. Under the standing source-bound
hypothesis, the construction reports $36$ improved unknotting-number
determinations, all giving the exact value $u=2$, and $72$ improved doubly
slice genus determinations, all giving the exact value $\gds=4$; every row in
both lists is a thirteen-crossing knot. These lists are recorded explicitly in
Section~\ref{sec:db}. The final workbook also
supports direct fixed-point and witness checks, but it does not contain the
pre-propagation snapshot needed to reconstruct every retrospective change
count. We therefore separate reproducible properties of the supplied final
assignment from historical statements about how many bounds were improved in
the original construction. The same distinction is made for the reported
skein-depth change counts.

Our main new theoretical result is a leading-coefficient obstruction for the
depth of an ordinary oriented binary skein tree. The usual degree estimate
$\td(K)\geq\degPz(K)$ follows from the skein expansion. We analyse the equality
case and show that if a depth-minimising tree attains this degree, then the top
coefficient of the HOMFLY--PT polynomial must be a single monomial of a very
restricted form. Specialisation to the Conway polynomial gives two successively weaker but
simpler tests: a leading-coefficient test and a degree-drop test, both using
columns already present in the database. Combined with the previously reported
depth-five upper bounds of \cite{Jab26}, the full obstruction determines the
skein depth exactly for eleven knots, two of which pass the Conway
specialisation $a=1$ and are detected only after retaining the $a$-dependence
of the top HOMFLY--PT coefficient. The historical construction reports $443$ cases in which the polynomial
tests strengthen lower bounds; as above, that count is treated as retrospective
because the earlier source intervals are not part of the supplied final
workbook.

The final part of the paper uses the network as a screening device for possible
new inequalities. We retain nine comparisons that are not consequences of the
transitive closure of the $47$ established arrows, are not contradicted by the
recorded intervals, and have both certified equality and strict witnesses in
the database. None of these candidates is used to construct the database. We
prove family cases and structural restrictions for eight of them, covering
alternating, torus, fibred, homogeneous, simultaneously signature-thin,
quasi-alternating, and positive knots. For the remaining comparison, between
ribbon genus and slicing number, we prove that its universal validity is
equivalent to the smooth Slice--Ribbon Conjecture. Thus, the candidate section separates
empirical compatibility from theorem-level evidence and from statements whose
full resolution would settle a major open problem.

The paper is organised as follows. Section~\ref{sec:network} fixes the
conventions, displays the graph, and records the references and short arguments
justifying its arrows. Section~\ref{sec:main-results} proves the
leading-coefficient obstruction and the eleven exact skein depths.
Section~\ref{sec:db} describes the database, its initialisation and
propagation, the soundness and confluence results, the scope of the
computations, and the exact values singled out by the construction.
Section~\ref{sec:candidates} introduces the nine candidate comparisons and
proves the partial results and counterexample restrictions.
Appendix~\ref{app:defs} collects the definitions of all $33$ vertex quantities,
and Appendix~\ref{app:table} gives the machine-readable form of the $47$
established relations.

\section{The inequality network}\label{sec:network}

We present the graph of the selected $47$ invariant inequalities for any knot
$K \hookrightarrow S^3$. In the diagram, shown in Figure~\ref{fig:graph}, we use
the source/target convention that an arrow $X \to Y$ means $X(K) \ge Y(K)$ for
every non-trivial knot $K$. We stress that the vertices are the normalised
quantities written inside the boxes, and the arrows compare precisely those quantities. Some
of the displayed relations fail for the unknot, which is why non-triviality is
assumed for the graph; relation $38$, for instance, reads $m(K)-1\le\alpha(K)-2$,
while the unknot has $m=2$ and $\alpha=2$. The knot invariants used below are abbreviated as
follows; definitions are collected in Appendix~\ref{app:defs}.

\medskip
\noindent\emph{Crossing-type and diagrammatic invariants.}
\begin{itemize}[leftmargin=2.2em,itemsep=0pt]
\item $c$ --- the crossing number,
\item $c_3$ --- the triple-crossing number,
\item $c_\Delta$ --- the delta-crossing number,
\item $\gfree$ --- the free genus,
\item $\gcan$ --- the canonical genus,
\item $g$ --- the genus,
\item $u$ --- the unknotting number,
\item $\ub$ --- the band-unknotting number,
\item $\ulb$ --- the band-unlinking number,
\item $\cl$ --- the clasp number,
\item $\td$ --- the skein tree depth,
\item $\tr$ --- the trivializing number,
\item $\br$ --- the bridge number,
\item $b$ --- the braid index,
\item $\alpha$ --- the arc index,
\item $m$ --- the mosaic number,
\item $a$ --- the ascending number.
\end{itemize}

\noindent\emph{Four-dimensional and concordance invariants.}
\begin{itemize}[leftmargin=2.2em,itemsep=0pt]
\item $\cl_4$ --- the 4D clasp number,
\item $g_4$ --- the slice genus,
\item $\sigma$ --- the knot signature,
\item $\gds$ --- the doubly slice genus,
\item $\gr$ --- the ribbon genus,
\item $\urw$ --- the weak ribbon unknotting number,
\item $\us$ --- the slicing number,
\item $\uc$ --- the concordance unknotting number.
\end{itemize}

\noindent\emph{Quantities from knot homologies.}
\begin{itemize}[leftmargin=2.2em,itemsep=0pt]
\item $\Ord_v$ --- the torsion order,
\item $\tau$ --- the Ozsv\'ath--Szab\'o $\tau$-invariant,
\item $s$ --- the Rasmussen $s$-invariant.
\end{itemize}

\noindent\emph{Quantities from knot polynomials.}
\begin{itemize}[leftmargin=2.2em,itemsep=0pt]
\item $\spD$ --- the span of the Alexander polynomial $\Delta(t)$,
\item $\spV$ --- the span of the Jones polynomial $V(t)$,
\item $\degPz$ --- the $z$-degree of the HOMFLY-PT polynomial $P_K(a,z)$,
\item $\spPa$ --- the $a$-spread of the HOMFLY-PT polynomial $P_K(a,z)$,
\item $\spFa$ --- the $a$-spread of the Kauffman polynomial $F_K(a,z)$.
\end{itemize}

\begin{figure}[p]
	\begin{center}
		
		\begin{adjustbox}{
				max width=\textwidth,
				max height=\textheight,
				center
			}
			
			\rotatebox{90}{
				
				\begin{tikzpicture}[
					x=1.6cm,
					y=1.7cm
					]
					
					\node[invariant] (b)
					at (12,8.4)
					{\VLabel{2b-2}};
					
					\node[invariant] (br)
					at (7.7,10)
					{\VLabel{2\mathrm{br}-2}};
					
					\node[4D] (uc)
					at (6.0,8.4)
					{\VLabel{2u_c}};
					
					\node[invariant] (asc)
					at (9.5,8.4)
					{\VLabel{2a}};

					\node[4D] (cl4)
					at (2.1,6.25)
					{\VLabel{2\mathrm{cl}_4}};
					
					\node[4D] (us)
					at (4,6.25)
					{\VLabel{2u_s}};
					
					\node[4D] (ur)
					at (6.0,6.25)
					{\VLabel{2u_r^\ast}};
					
					\node[invariant] (u)
					at (7.7,6.25)
					{\VLabel{2u}};
					
					\node[invariant] (cl)
					at (10.2,6.25)
					{\VLabel{2\mathrm{cl}}};
					
					\node[invariant] (tr)
					at (12.0,6.25)
					{\VLabel{\mathrm{tr}}};

					\node[4D] (g4)
					at (0.0,3.95)
					{\VLabel{2g_4}};
					
					\node[4D] (gr)
					at (2.1,3.95)
					{\VLabel{2g_r}};
					
					\node[invariant] (ulb)
					at (4,3.95)
					{\VLabel{u_{lb}}};
					
					\node[invariant] (ub)
					at (6.0,3.95)
					{\VLabel{u_b}};
					
					\node[invariant] (g)
					at (7.7,3.95)
					{\VLabel{2g}};
					
					\node[invariant] (gf)
					at (9.5,3.95)
					{\VLabel{2g_f}};
					
					\node[invariant] (gc)
					at (12.0,3.95)
					{\VLabel{2g_c}};
					
					\node[invariant] (c3)
					at (13.8,3.95)
					{\VLabel{c_3}};
					
					\node[invariant] (c)
					at (16.1,3.95)
					{\VLabel{c}};

					\node[4D] (sig)
					at (-1.3,3.95)
					{\VLabel{|\sigma|}};
					
					\node[4D] (gds)
					at (2.1,1.55)
					{\VLabel{g_{ds}}};
					
					\node[homol] (ordv)
					at (4,1.55)
					{\VLabel{\mathrm{Ord}_v}};
					
					\node[invariant] (td)
					at (11.0,-1.55)
					{\VLabel{\mathrm{td}}};
					
					\node[poly] (spD)
					at (8.2,1.55)
					{\VLabel{\mathrm{sp}\Delta_t}};
					
					\node[poly] (degPz)
					at (10.0,1.55)
					{\VLabel{\deg P_z}};
					
					\node[poly] (spV)
					at (12.0,1.55)
					{\VLabel{\left\lceil
							\nicefrac{\mathrm{sp}V_t}{2}
							\right\rceil}};
					
					\node[invariant] (cD)
					at (15.0,1.55)
					{\VLabel{2c_{\Delta}}};

					\node[homol] (tau)
					at (-1.0,6.25)
					{\VLabel{2|\tau|}};
					
					\node[homol] (s)
					at (-0.0,2.1)
					{\VLabel{|s|}};
					
					\node[poly] (spFa)
					at (9.5,11.55)
					{\VLabel{\mathrm{sp}F_a}};
					
					\node[invariant] (m)
					at (11.0,11.55)
					{\VLabel{m-1}};
					
					\node[invariant] (alpha)
					at (12.6,10.55)
					{\VLabel{\alpha-2}};
					
					\node[poly] (spPa)
					at (6.0,11.55)
					{\VLabel{\mathrm{sp}P_a}};

					\draw[edgeback]
					(b.west)
					to[bend right=10]
					node[redlabel,pos=.60,auto,swap]
					{\ELabel{14}}
					(alpha.east);
					
					\draw[edgeback]
					(br.north)
					to[bend left=12]
					node[redlabel,pos=.50,auto,swap]
					{\ELabel{22}}
					(b.south);
					
					\draw[edgeback]
					(br.north east)
					to[bend left=15]
					node[redlabel,pos=.60,auto,swap]
					{\ELabel{33}}
					(asc.south west);
					
					\draw[edgeback]
					(uc.north)
					--
					node[redlabel,pos=.50,auto]
					{\ELabel{31}}
					(u.south west);
					
					\draw[edgefwd]
					(asc.south)
					to[bend right=30]
					node[redlabel,pos=.50,auto,swap]
					{\ELabel{32}}
					(u.north west);
					
					\draw[edgeback]
					(asc.north)
					to[bend left=7]
					node[redlabel,pos=.50,auto]
					{\ELabel{24}}
					(c.west);
					
					\draw[edgeback]
					(spPa.north)
					to[bend left=7]
					node[redlabel,pos=.40,auto]
					{\ELabel{23}}
					(b.south west);
					
					\draw[edgefwd,rounded corners=45pt]
					(br.south)
					-- (5.15,9.35)
					-- node[redlabel,pos=.3,auto,swap]
					{\ELabel{47}}
					(5.15,2.15)
					-- (ordv.north);
					
					\draw[edgeback]
					(cl4.north)
					--
					node[redlabel,pos=.50,auto]
					{\ELabel{12}}
					(us.south);
					
					\draw[edgebackwhite]
					(us.north)
					--
					(ur.south);
					\draw[edgeback]
					(us.north)
					--
					node[redlabel,pos=.20,auto]
					{\ELabel{25}}
					(ur.south);
					
					\draw[edgeback]
					(ur.north)
					to[bend left=2]
					node[redlabel,pos=.50,auto]
					{\ELabel{35}}
					(u.south);
					
					\draw[edgeback]
					(u.north east)
					to[bend left=2]
					node[redlabel,pos=.50,auto,swap]
					{\ELabel{29}}
					(cl.south);
					
					\draw[edgeback]
					(u.north)
					to[bend left=35]
					node[redlabel,pos=.50,auto]
					{\ELabel{10}}
					(tr.south);
					
					\draw[edgeback]
					(tr.north)
					--
					node[redlabel,pos=.50,auto,swap]
					{\ELabel{26}}
					(c.south west);
					
					\draw[edgeback]
					(g4.north)
					--
					node[redlabel,pos=.50,auto]
					{\ELabel{19}}
					(gr.south);
					
					\draw[edgeback]
					(g4.north west)
					to[bend left=32]
					node[redlabel,pos=.50,auto]
					{\ELabel{11}}
					(cl4.south);
					
					\draw[edgeback]
					(g4.north east)
					--
					node[redlabel,pos=.50,auto]
					{\ELabel{21}}
					(gds.south);
					
					\draw[edgeback]
					(gr.north)
					--
					node[redlabel,pos=.50,auto,swap]
					{\ELabel{18}}
					(ulb.south);
					
					\draw[edgebackwhite]
					(gr.north west)
					to[bend left=5]
					(ur.south east);
					
					\draw[edgeback]
					(gr.north west)
					to[bend left=5]
					node[redlabel,pos=.50,auto,swap]
					{\ELabel{30}}
					(ur.south east);
					
					\draw[edgebackwhite]
					(ulb.north)
					--
					(ub.south);
					\draw[edgeback]
					(ulb.north)
					--
					node[redlabel,pos=.20,auto]
					{\ELabel{17}}
					(ub.south);
					
					\draw[edgeback]
					(ub.north west)
					to[bend left=10]
					node[redlabel,pos=.50,auto,swap]
					{\ELabel{15}}
					(u.south east);
					
					\draw[edgeback]
					(ub.north)
					--
					node[redlabel,pos=.60,auto,swap]
					{\ELabel{16}}
					(g.south);
					
					\draw[edgeback]
					(g.south east)
					to[bend right=57]
					node[redlabel,pos=.60,auto,swap]
					{\ELabel{27}}
					(td.south east);

					\draw[edgeback]
					(ordv.west)
					--
					node[redlabel,pos=.50,auto,swap]
					{\ELabel{44}}
					(ulb.east);

					\draw[edgeback]
					(g.north)
					--
					node[redlabel,pos=.50,auto]
					{\ELabel{6}}
					(gf.south);

					\draw[edgeback]
					(g.north west)
					to[bend left=6]
					node[redlabel,pos=.50,auto,swap]
					{\ELabel{4}}
					(cl.south east);

					\draw[edgeback]
					(gf.north)
					--
					node[redlabel,pos=.50,auto]
					{\ELabel{5}}
					(gc.south);

					\draw[edgeback]
					(gc.north)
					--
					node[redlabel,pos=.50,auto]
					{\ELabel{3}}
					(c3.south);

					\draw[edgeback]
					(gc.west)
					to[bend left=0]
					node[redlabel,pos=.50,auto]
					{\ELabel{28}}
					(tr.east);

					\draw[edgeback]
					(c3.north)
					to[bend left=0]
					node[redlabel,pos=.50,auto,swap]
					{\ELabel{2}}
					(c.south);

					\draw[edgeback]
					(c3.east)
					--
					node[redlabel,pos=.50,auto]
					{\ELabel{36}}
					(cD.west);

					\draw[edgeback]
					(sig.north)
					--
					node[redlabel,pos=.50,auto]
					{\ELabel{13}}
					(g4.south);

					\draw[edgeback]
					(gds.north)
					--
					node[redlabel,pos=.50,auto,swap]
					{\ELabel{20}}
					(ulb.south east);

					\draw[edgeback]
					(td.north)
					to[bend right=25]
					node[redlabel,pos=.50,auto]
					{\ELabel{43}}
					(cD.south east);

					\draw[edgeback]
					(td.north east)
					to[bend right=40]
					node[redlabel,pos=.50,auto,swap]
					{\ELabel{1}}
					(c.east);

					\draw[edgefwd]
					(td.south)
					to[bend left=20]
					node[redlabel,pos=.50,auto,swap]
					{\ELabel{39}}
					(degPz.east);

					\draw[edgeback]
					(spD.north)
					--
					node[redlabel,pos=.50,auto]
					{\ELabel{40}}
					(degPz.south);

					\draw[edgeback]
					(spD.south west)
					--
					node[redlabel,pos=.50,auto,swap]
					{\ELabel{8}}
					(g.north east);

					\draw[edgeback]
					(degPz.north west)
					--
					node[redlabel,pos=.50,auto,swap]
					{\ELabel{7}}
					(gc.south east);

					\draw[edgeback]
					(spV.north west)
					to[bend right=8]
					node[redlabel,pos=.50,auto,swap]
					{\ELabel{9}}
					(c3.south east);

					\draw[edgeback]
					(tau.north)
					to[bend left=18]
					node[redlabel,pos=.60,auto,swap]
					{\ELabel{42}}
					(g4.west);

					\draw[edgeback]
					(s.west)
					to[bend right=0]
					node[redlabel,pos=.50,auto,swap]
					{\ELabel{41}}
					(g4.east);

					\draw[edgeback]
					(spFa.north)
					to[bend right=17]
					node[redlabel,pos=.80,auto,swap]
					{\ELabel{37}}
					(alpha.south);

					\draw[edgeback]
					(m.north)
					--
					node[redlabel,pos=.60,auto,swap]
					{\ELabel{38}}
					(alpha.south west);

					\draw[edgeback]
					(alpha.north)
					to[bend left=40]
					node[redlabel,pos=.50,auto,swap]
					{\ELabel{45}}
					(c.north west);
					
					\draw[edgebackwhite]
					(cl4.west)
					to[bend left=15]
					(uc.south);
					
					\draw[edgeback]
					(cl4.west)
					to[bend left=15]
					node[redlabel,pos=.50,auto]
					{\ELabel{46}}
					(uc.south);

					\draw[edgebackwhite]
					(sig.east)
					to[bend right=75]
					(spD.south);
					
					\draw[edgeback]
					(sig.east)
					to[bend right=75]
					node[redlabel,pos=.6,auto]
					{\ELabel{34}}
					(spD.south);
					
				\end{tikzpicture}
			}
			
		\end{adjustbox}
		
	\end{center}
	
	\caption{Graph of selected $47$ inequalities between $33$ invariants}
	\label{fig:graph}
	
\end{figure}

\begin{convention}\label{conv:cat}
Unless explicitly stated otherwise, knots and surfaces are smooth and oriented.
The invariants $g_4$, $\gr$, $\gds$, and $\cl_4$ are used in the smooth
category. Besides the $33$ vertex quantities we use one auxiliary invariant
that is \emph{not} a vertex of Figure~\ref{fig:graph}: the topological slice
genus $\gtop(K)$, the minimal genus of a locally flat, oriented surface properly
embedded in $B^4$ with boundary $K$. It satisfies $\gtop(K) \le g_4(K)$, and the
inequality can be strict. We normalise the signature so that the right-handed trefoil has
$\sigma=-2$, and use the standard compatible conventions
$\tau(T(2,3))=1$ and $s(T(2,3))=2$. Thus, the alternating-knot formulas used
below read $s=-\sigma$ and $\tau=-\sigma/2$. Simultaneously reversing all
three sign conventions leaves every absolute-value statement in this paper
unchanged. For the inequalities imported from Shibuya~\cite{Shibuya}, we use the
corresponding knot versions of his local crossing-change and ribbon
constructions. In particular, $\urw$ denotes a crossing-change distance to weak
ribbon knots, as in Shibuya's Definition~5; it is \emph{not} the number of
ribbon singularities of a ribbon disk.
\end{convention}

\begin{convention}\label{conv:poly}
For the HOMFLY--PT polynomial we use the normalisation $P_U(a,z)=1$ and
\[
a\,P_{L_+}(a,z) - a^{-1}P_{L_-}(a,z) = z\,P_{L_0}(a,z).
\]
Thus, $P_K(1,z)$ is the Conway polynomial $\nabla_K(z)$ and, up to multiplication
by a unit $\pm t^{r}$,
\[
\Delta_K(t) = \nabla_K\!\left(t^{1/2}-t^{-1/2}\right).
\]
All polynomial spans and degrees below are taken in this convention; spans are
unaffected by multiplication by monomial units. KnotInfo uses
$v^{-1}P_{L_+}-vP_{L_-}=zP_{L_0}$, so its HOMFLY variable must be replaced by
$v=a^{-1}$ when comparing coefficients with our convention
\cite{KnotInfoPolynomials}. Neither the $z$-degree nor the spread in the other
variable changes under this replacement, but coefficient signs must not be
renormalised arbitrarily.
\end{convention}

References for the corresponding relations, numbered by the arrow labels, are
as follows: $1$ in \cite{Cromwell89}, $2,9$ in \cite{Adams}, $3$ in
\cite{Jab20}, $4,29$ in \cite{Shibuya}, $5$ in \cite{KobKob}, $6$ in
\cite{Moriah}, $7$ in \cite{Morton}, $8$ in \cite{Giller}, $10$ in
\cite{Henrich}, $28$ in \cite{Hanaki14}, $11$ in \cite{MurYas}, $13$ in \cite{KaufTaylor}, $14$ in
\cite{Cromwell95}, $15$ in \cite{HNT}, $16,17,18,44,47$ in \cite{JMZ}, $20$ in
\cite{McDonald}, $21$ in \cite{LivMeier}, $22$ in \cite{Schubert} (see also
\cite{Yamada}), $23$ in \cite{FranksWilliams,Morton}, $24,32,33$ in
\cite{Ozawa}, $26$ in \cite{Hanaki14}, $27$ in \cite{SchThom}, $31,46$ in
\cite{OwensStrle}, $36$ in \cite{Jab23}, $37$ in \cite{MortonBeltrami}, $38$ in
\cite{LHLO}, $39,43$ in \cite{Jab26}, $41$ in \cite{Rasmussen}, $42$ in
\cite{OzsSzaGenus}, $45$ in \cite{BaePark}.

The remaining displayed relations, or useful details about them, are justified
as follows.

\begin{itemize}[leftmargin=1.6em]
\item \textbf{12.} If $n=\us(K)$ crossing changes transform $K$ into a slice
knot, trace those crossing changes in $S^3\times I$ and cap the final knot by a
slice disk in $B^4$. This is the crossing-change trace construction of
Shibuya~\cite[Lemma~10]{Shibuya}. The result is a properly immersed disk for $K$
with at most $n$ transverse double points. Hence, $\cl_4(K)\le \us(K)$, i.e.,\ $2\cl_4 \le 2\us$.

\item \textbf{19.} Every ribbon surface in $B^4$ is, in particular, a smooth
properly embedded surface in $B^4$. Therefore, $g_4(K)\le \gr(K)$.

\item \textbf{20.} McDonald's band number, written $b(K)$ in \cite{McDonald}
and not to be confused with the braid index $b(K)$ used throughout the present
paper, is the minimum number of oriented saddle moves needed to pass from $K$
to an unlink. This is exactly the quantity denoted $\ulb(K)$ here. McDonald's
theorem therefore gives $\gds(K)\le \ulb(K)$.

\item \textbf{22.} The bound $\br(K)\le b(K)$ is classical: present an $n$-braid in
a cylinder with all strands monotone in height, and close it by $n$ disjoint
exterior arcs, each with one maximum and one minimum. This gives a
presentation of the closure with $n$ maxima and hence an $n$-bridge
presentation. We attribute this bound to Schubert \cite{Schubert}; Yamada's identification of the braid index with the minimal
number of Seifert circles \cite{Yamada} is the standard route to the companion
relation $23$.

\item \textbf{25, 30, 35.} With the convention for $\urw$ fixed in
Appendix~\ref{app:defs}, these three comparisons have direct smooth proofs.
Every ribbon knot is slice, so $\us(K)\leq\urw(K)$, which is relation~$25$.
The unknot is ribbon, so $\urw(K)\leq u(K)$, which is relation~$35$. For
relation~$30$, let $n=\urw(K)$ and choose $n$ crossing changes taking $K$ to a
ribbon knot $J$. A crossing change is realisable by two oriented saddle moves
\cite[Section~1.4]{JMZ}. Reversing the sequence if necessary and concatenating
these saddle cobordisms with a ribbon disk for $J$ produces a ribbon surface for
$K$ of genus $n$. Hence $\gr(K)\leq\urw(K)$. Shibuya's Definition~5 is the
historical source of the weak-ribbon crossing-change invariant
\cite{Shibuya}; in particular, $\urw$ here is not the number of ribbon
singularities of a disk.

\item \textbf{34.} In \cite{Feller} the symbol $\deg\Delta_K$ denotes the
\emph{breadth} of the Alexander polynomial, i.e.,\ the difference between its
largest and its smallest exponent; this is exactly the quantity written $\spD(K)$
here. Feller's Theorem~1 states that this breadth is at least \emph{twice} the
topological slice genus, that is,
\[
2\gtop(K)\ \le\ \deg\Delta_K\ =\ \spD(K).
\]
The signature bound of Kauffman and Taylor \cite{KaufTaylor}, which is valid in
the locally flat category, gives $|\sigma(K)| \le 2\gtop(K)$. Combining the two
inequalities yields
$
|\sigma(K)| \le \spD(K).
$
The companion estimate $\spD(K)\le 2g(K)$ is relation $8$; the two together form
the chain $2\gtop\le \spD\le 2g$ recorded in \cite{Feller}.

\item \textbf{40.} With the normalisation fixed in Convention~\ref{conv:poly},
set $a=1$ to obtain the Conway polynomial. The specialisation can only cause
cancellation, so $\delta:=\deg_z\nabla_K \le \degPz(K)$. For a knot,
$\nabla_K$ is a polynomial in $z^2$; writing
$\nabla_K(z)=\sum_{j \text{ even}} c_j z^{j}$ with $c_\delta\neq 0$ and
substituting $z=t^{1/2}-t^{-1/2}$ therefore produces a genuine Laurent
polynomial in $t$, since a monomial $z^{j}$ with $j$ even expands into
integral powers of $t$ ranging from $t^{j/2}$ down to $t^{-j/2}$ and has
$t$-span exactly $j$. Only the top monomial contributes the extreme exponents
$\pm\delta/2$, with the non-zero coefficient $c_\delta$, so they survive;
hence $\spD(K)=\deg_z\nabla_K=\delta$ and therefore
$\spD(K) \le \degPz(K)$.
\end{itemize}

\begin{remark}\label{rem:norm}
Some comparisons in Figure~\ref{fig:graph} are deliberately displayed in the
normalisation determined by the chosen vertices, even when a sharper numerical
estimate is known.
\begin{itemize}[leftmargin=1.6em]
\item Relation $1$ is drawn as $c \ge \td$, whereas $\td(K)\le c(K)-1$ holds for
every non-trivial knot \cite{Cromwell89,Jab26}.
\item The composite of relations $24$ and $32$ gives $u(K)\le c(K)/2$, whereas
the classical bound is $u(K)\le (c(K)-1)/2$, also obtained here by composing
$2u\le\tr$ with Hanaki's $\tr\le c-1$
\cite{Henrich,Hanaki10,Hanaki14}.
\item Relation $47$ is $2\br(K)-2 \ge \Ord_v(K)$. The sharper
Juh\'asz--Miller--Zemke estimate is $\Ord_v(K)\le \br(K)-1$ \cite{JMZ}, or
equivalently $2\br(K)-2 \ge 2\Ord_v(K)$. Thus, relation $47$ is valid but
intentionally weaker; no additional vertex $2\Ord_v$ is introduced.

\item The weakening in the previous item is not forced by the vertex set. The
sharp form is the linear constraint $2\Ord_v(K)\le 2\br(K)-2$ between two
quantities that are \emph{both} already stored, so it can be applied as a side
rule of the propagation exactly as the congruence conditions of
Subsection~\ref{sec:prop} are, without introducing a vertex. It is not part of
the rule set imposed here; see Subsection~\ref{sec:next}.
\end{itemize}
\end{remark}

\begin{remark}\label{rem:top}
Relation $34$ is a topological statement and must not be composed with the
smooth vertices as if it were a smooth one. Feller's theorem bounds $\gtop$, and
$\gtop(K)\le g_4(K)$ with strict inequality for, e.g., the positively clasped
untwisted Whitehead double of the right-handed trefoil: its Alexander polynomial
is trivial, so it is topologically slice by Freedman's theorem \cite{Freedman}
and $\gtop=0$, while $\tau=1$ by Hedden's computation \cite{Hedden}, so
$g_4\ge|\tau|=1$ by \cite{OzsSzaGenus}. Consequently
\[
\spD(K)\ \ge\ 2\gtop(K) \;\;\text{does \emph{not} give}\;\; \spD(K)\ \ge\ 2g_4(K).
\]
Accordingly the only arrow drawn out of $\spD$ towards the four-dimensional part
of the graph is the one landing on $|\sigma|$, and $\gtop$ is not a vertex. The
classical companion bound $\spD(K)\le 2g(K)$ goes in the other direction and is
relation $8$ of Figure~\ref{fig:graph}.
\end{remark}

\section{A polynomial obstruction for skein tree depth}\label{sec:main-results}

The degree bound $\td(L)\geq\deg_zP_L$ is already recorded in
\cite[Proposition~4.3]{Jab26}. The argument below examines the equality case
in the standard skein expansion and obtains an explicit necessary condition
on the leading coefficient. For earlier work on skein-tree complexity,
including minimal trees and exact depths for positive braid closures, see
\cite{SchThom,Thompson89,Kaplan15}.

We use the normalisation of Convention~\ref{conv:poly},
\[
 aP_{L_+}(a,z)-a^{-1}P_{L_-}(a,z)=zP_{L_0}(a,z),
 \;\;\; P_U(a,z)=1.
\]
For a knot or link with $d=\deg_zP_L\geq0$, write
$p_d(a)=[z^d]P_L(a,z)$ and set
\[
 \mathcal M_d=\{(-1)^q a^{2q-d}:q=0,\ldots,d\}.
\]

\begin{theorem}[Leading-coefficient obstruction]\label{prop:td-leading}
If $L$ admits an ordinary oriented binary skein tree, as defined in Appendix~\ref{app:defs}, of depth $d=\deg_zP_L\geq0$, then
$p_d(a)\in\mathcal M_d$. Consequently,
\[
 p_d(a)\notin\mathcal M_d
 \;\;\Longrightarrow\;\;
 \operatorname{td}(L)\geq d+1.
\]

\end{theorem}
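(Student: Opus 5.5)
The plan is to expand $P_L$ along the given tree and follow the $z$-degree of each leaf's contribution. Only one root-to-leaf path can reach $z$-degree $d$, so $p_d(a)$ is a single signed monomial. I use the normalisation of Convention~\ref{conv:poly}, solved for the diagram at the node:
\[
P_{L_+}=a^{-2}P_{L_-}+a^{-1}zP_{L_0},\qquad P_{L_-}=a^{2}P_{L_+}-azP_{L_0}.
\]
In an ordinary oriented binary skein tree (Appendix~\ref{app:defs}), each internal node is a diagram with a distinguished crossing. Its two children are the diagram with that crossing switched (the \emph{switch edge}) and the oriented smoothing (the \emph{smoothing edge}). The leaves are diagrams of trivial links, and the depth is the maximal number of edges on a root-to-leaf path.

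\textbf{Step 1: expansion as a sum over leaves.} By induction on the tree,
\[
P_L=\sum_{\lambda}w_\lambda\,P_{U_{r(\lambda)}},
\]
where $\lambda$ runs over the leaves, $r(\lambda)$ is the number of components of the unlink at $\lambda$, and $w_\lambda$ is the product of the edge weights along the path from the root. A switch edge has weight $a^{\mp2}$. A smoothing edge has weight $a^{-1}z$ at a positive crossing and $-az$ at a negative crossing. From the normalisation,
\[
P_{U_r}=\Bigl(\frac{a-a^{-1}}{z}\Bigr)^{r-1},
\]
which has $z$-degree exactly $-(r-1)$.

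\textbf{Step 2: degree count.} Let $k(\lambda)$ be the number of smoothing edges on the path to $\lambda$. The $\lambda$-term is a polynomial in $a^{\pm1}$ times a power of $z$, with $z$-exponent $k(\lambda)-(r(\lambda)-1)$. Since $k(\lambda)\le\operatorname{depth}(\lambda)\le d$ and $r(\lambda)\ge1$, this exponent is at most $d$. Equality requires that the path to $\lambda$ has length exactly $d$, consists only of smoothing edges, and ends at the unknot.

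At every node there is exactly one smoothing child. Hence there is at most one path from the root consisting only of smoothing edges, and at most one leaf can contribute to $[z^d]$. Because $\deg_zP_L=d$, the coefficient $p_d(a)$ is nonzero, so this leaf exists and $p_d(a)=w_\lambda$ is its weight.

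\textbf{Step 3: identify the monomial.} Suppose $q$ of the $d$ smoothed crossings are negative and $d-q$ are positive. Then
\[
w_\lambda=(-az)^{q}(a^{-1}z)^{d-q}=(-1)^q a^{2q-d}z^d .
\]
So $p_d(a)=(-1)^qa^{2q-d}\in\mathcal M_d$. The second statement then follows at once from the first together with the known degree bound $\td(L)\ge d$ \cite[Proposition~4.3]{Jab26}. If $p_d(a)\notin\mathcal M_d$, no tree of depth $d$ can exist, so any tree must have depth at least $d+1$.

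\textbf{Main obstacle.} I expect the only delicate point to be matching the precise conventions of Appendix~\ref{app:defs}. First, the leaves must be unlinks, since other trivial-polynomial leaves would break the count. Second, depth must be counted in edges along both children. Third, no cancellation between leaves should be possible; this is automatic here because only one leaf reaches degree $d$. I would check these conventions first, and if the appendix allows split or unlink leaves with several components, confirm that they only lower the degree, as computed above.
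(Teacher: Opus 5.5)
Your proposal is correct and follows the paper's route. You expand $P_L$ over the leaves with edge factors $a^{\mp2}$ and $a^{-1}z$ or $-az$, show that each leaf contributes in $z$-degree $k-(r-1)\le d$ with equality only along the unique all-smoothing path of length $d$ ending at the unknot, and read off $(-1)^q a^{2q-d}$. The one point the paper states explicitly and you leave implicit is that the zero-cost Reidemeister moves allowed by the definition do not disturb the expansion: they preserve the HOMFLY--PT polynomial of the current diagram and leave the accumulated edge weights unchanged.
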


\begin{proof}
Solving the skein relation for the parent expresses it as a linear combination of
its two children: at a positive crossing the crossing-change and smoothing
factors are $a^{-2}$ and $a^{-1}z$, at a negative crossing they are
$a^2$ and $-az$. Iterating this at every internal vertex writes $P_L$ as the sum,
over the leaves of the tree, of the product of the factors along the
root-to-leaf path times the polynomial of the leaf; the factor attached to an
edge depends only on the sign of the crossing resolved there, and the
$r$-component unlink at a leaf has polynomial $((a-a^{-1})/z)^{r-1}$. Hence, a
leaf reached by $s$ smoothing edges and $t$ crossing-change edges and
representing the $r$-component unlink contributes an expression of the form
\[
 \varepsilon a^m(a-a^{-1})^{r-1}z^{s-r+1},\;\;\; \varepsilon\in\{\pm1\},
\]
whose $z$-degree is exactly $s-r+1$, since $(a-a^{-1})^{r-1}$ does not involve
$z$. Since $r\geq1$, in a tree of depth $h$ every leaf contribution satisfies
\[
 s-r+1\leq s\leq s+t\leq h.
\]
Consequently $\deg_zP_L\leq h$, recovering the degree bound
\cite[Proposition~4.3]{Jab26}; in particular, $\td(L)\geq d$.
If $h=d$, a contribution to $z^d$ must satisfy $s-r+1=d$, hence $s=d$, $r=1$
and $t=0$.
There is at most one such path: having no crossing-change edge, it chooses the
smoothing child at each internal vertex, and this determines it. Call it the
\emph{all-smoothing path} of the tree. A priori it may terminate at a leaf of
depth $s<d$, or at a leaf representing an $r$-component unlink with $r\ge2$; in
either case its $z$-degree $s-r+1$ is strictly less than $d$, and then no leaf
at all contributes to $z^{d}$, so that $p_d(a)=0$, contradicting
$d=\deg_zP_L$. Hence the all-smoothing path has length exactly $d$, ends at the unknot,
and is the unique leaf contributing to $z^{d}$. If exactly $q$
of its $d$ smoothings are at negative crossings, its contribution is
\[
(a^{-1}z)^{d-q}(-az)^q\cdot1=(-1)^q a^{2q-d}z^d ,
\]
so this single monomial, with $0\leq q\leq d$, is the whole leading
coefficient; that is, $p_d(a)\in\mathcal M_d$. Since no tree of depth $d$ exists when
$p_d(a)\notin\mathcal M_d$, while $\td(L)\geq d$ always, the stated consequence
follows. Reidemeister moves made after a resolution preserve the
HOMFLY--PT polynomial of the current diagram and do not alter the monomial
factor already accumulated along the preceding skein edges, so allowing such
moves at zero cost does not affect the argument.
\end{proof}

\begin{remark}[Parity and mirror invariance]\label{rem:mirror-Md}
For a knot $K$ one has $P_K(a,z)\in\Z[a^{\pm2},z^{2}]$, so
$d=\degPz(K)$ is even and $p_d(a)$ involves only even powers of $a$.
Since $d$ is even, the substitution $a\mapsto a^{-1}$ maps $\mathcal M_d$ onto
itself: writing $q'=d-q$ gives
\[
(-1)^{q}a^{-(2q-d)}=(-1)^{q}a^{2q'-d}=(-1)^{q'}a^{2q'-d},
\]
the last equality because $(-1)^{d}=1$. Consequently the conclusion of
Theorem~\ref{prop:td-leading} is insensitive to the choice of mirror
representative, as is the test derived from it; no chirality convention has to
be fixed before the obstruction is applied. The same observation is what makes
the coefficient conversion of Subsection~\ref{subsec:td-computation}
unambiguous.
\end{remark}

\begin{corollary}[A test using existing NewDB5 columns]\label{cor:td-drop}
For a knot $K$,
\[
 \operatorname{sp}\Delta_t(K)<\deg_zP_K=d
 \;\;\Longrightarrow\;\;
 \operatorname{td}(K)\geq d+1.
\]
\end{corollary}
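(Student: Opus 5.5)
The plan is to reduce the corollary to Theorem~\ref{prop:td-leading} by showing that the hypothesis $\spD(K)<d$ forces $p_d(a)\notin\mathcal M_d$. The contrapositive is the cleanest route: assume $p_d(a)\in\mathcal M_d$ and derive $\spD(K)=d$. The hypothesis is incompatible with this, so the theorem gives $\td(K)\ge d+1$.

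The first step is to relate $p_d$ to the Conway polynomial. By Convention~\ref{conv:poly}, $\nabla_K(z)=P_K(1,z)$. Hence the coefficient of $z^d$ in $\nabla_K$ is $p_d(1)$, and the coefficients of higher powers vanish because $d=\deg_zP_K$. If $p_d(a)=(-1)^q a^{2q-d}$ for some $0\le q\le d$, then $p_d(1)=(-1)^q\neq0$, so $\deg_z\nabla_K=d$. The second step invokes the argument recorded for relation~$40$, which shows that for a knot $\spD(K)=\deg_z\nabla_K$. Together these give $\spD(K)=d$.

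Hence $\spD(K)<d$ implies $p_d(1)=0$, so $p_d(a)\notin\mathcal M_d$, because every element of $\mathcal M_d$ evaluates to $\pm1$ at $a=1$. Theorem~\ref{prop:td-leading} then yields $\td(K)\ge d+1$. The case $d=0$ should be checked for consistency. There $\spD\ge0=d$, so the hypothesis is vacuous and nothing needs to be shown.

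There is no real obstacle here. The only point needing care is the identification $\spD=\deg_z\nabla_K$. This requires that the top coefficient of $\nabla_K$ survives the substitution $z=t^{1/2}-t^{-1/2}$, and that is exactly the computation already given in item~$40$. The remaining care is in the normalisation: the $a=1$ specialisation must be taken in the convention of Convention~\ref{conv:poly}. By Remark~\ref{rem:mirror-Md} and the $v=a^{-1}$ conversion, the vanishing of $p_d(1)$ does not depend on the choice of convention.
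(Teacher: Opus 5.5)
Your proposal is correct and matches the paper's proof. Both arguments use $P_K(1,z)=\nabla_K(z)$ and $\deg_z\nabla_K=\spD(K)$ from item~40 to show that a degree drop forces $p_d(1)=0$; this excludes $p_d(a)\in\mathcal M_d$, since every member of $\mathcal M_d$ equals $\pm1$ at $a=1$, and Theorem~\ref{prop:td-leading} then gives the bound. Your contrapositive framing, the $d=0$ check and the remark on conventions are harmless additions.
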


\begin{proof}
We have $P_K(1,z)=\nabla_K(z)$ and
$\deg_z\nabla_K=\operatorname{sp}\Delta_t(K)$.
A degree drop forces $p_d(1)=0$, whereas every member of
$\mathcal M_d$ takes the value $1$ or $-1$ at $a=1$.
\end{proof}

\begin{corollary}[Conway leading-coefficient test]\label{cor:td-conway}
Let $K$ be a knot, let $d=\degPz(K)$, and write
$\nabla_K(z)=\sum_j c_j z^{j}$ for its Conway polynomial. If
\[
 c_d\notin\{1,-1\}
 \;\;\;\text{then}\;\;\;
 \operatorname{td}(K)\geq d+1.
\]
Equivalently, in terms of the stored columns: either $\spD(K)<d$, in which case
$c_d=0$ and the conclusion is Corollary~\ref{cor:td-drop}; or $\spD(K)=d$, in
which case $|c_d|$ is the absolute value of the extreme coefficient of
$\Delta_K$, and the conclusion holds as soon as that coefficient is not $\pm1$.
\end{corollary}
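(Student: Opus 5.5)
The plan is to deduce the statement directly from Theorem~\ref{prop:td-leading} by specialising at $a=1$. Let $d=\degPz(K)$ and $p_d(a)=[z^d]P_K(a,z)$. Convention~\ref{conv:poly} gives $P_K(1,z)=\nabla_K(z)$. Specialising $a$ commutes with extracting the coefficient of $z^d$, so $c_d=p_d(1)$; this holds even when $c_d=0$, that is, when cancellation lowers the degree.

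Every element of $\mathcal M_d$ has the form $(-1)^q a^{2q-d}$, and at $a=1$ it evaluates to $(-1)^q\in\{1,-1\}$. Hence $p_d(a)\in\mathcal M_d$ implies $c_d\in\{1,-1\}$. Contrapositively, $c_d\notin\{1,-1\}$ gives $p_d(a)\notin\mathcal M_d$, and Theorem~\ref{prop:td-leading} then yields $\td(K)\ge d+1$. This is the main assertion.

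For the reformulation in terms of stored columns, I would use relation~$40$, whose justification shows $\spD(K)=\deg_z\nabla_K\le d$. This splits into two cases.

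\begin{itemize}[leftmargin=1.6em]
\item If $\spD(K)<d$, then $\deg_z\nabla_K<d$, so $c_d=0\notin\{\pm1\}$. This is exactly the situation of Corollary~\ref{cor:td-drop}.
\item If $\spD(K)=d$, then $c_d$ is the top Conway coefficient, and I need to identify $|c_d|$ with the extreme coefficient of $\Delta_K$.
\end{itemize}

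The second case is the only step needing care. By the argument for relation~$40$, after substituting $z=t^{1/2}-t^{-1/2}$, only the monomial $c_dz^d$ reaches the exponents $\pm d/2$. Since $(t^{1/2}-t^{-1/2})^d$ has extreme coefficients $1$ and $(-1)^d=1$ ($d$ is even by Remark~\ref{rem:mirror-Md}), both extreme coefficients of $\Delta_K$ equal $c_d$. This holds up to the unit $\pm t^r$, which affects only the sign. So the absolute value of the extreme Alexander coefficient is $|c_d|$, and the condition "not $\pm1$" is the same in both descriptions. No real obstacle is expected beyond tracking this unit ambiguity, which disappears on taking absolute values.
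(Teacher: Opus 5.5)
Your proposal is correct and follows essentially the same route as the paper. Both arguments specialise Theorem~\ref{prop:td-leading} at $a=1$, using that every element of $\mathcal M_d$ evaluates to $\pm1$ and that $p_d(1)=c_d$, and then use the item~$40$ computation to identify $\spD$ with $\deg_z\nabla_K$ and the extreme Alexander coefficient with $c_d$ up to sign. Your explicit check that $(t^{1/2}-t^{-1/2})^d$ has both extreme coefficients equal to $1$ for even $d$ spells out a detail the paper leaves implicit.
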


\begin{proof}
Since $\operatorname{td}(K)\ge d$ always, the only alternative to
$\operatorname{td}(K)\ge d+1$ is $\operatorname{td}(K)=d$, that is, the
existence of an ordinary oriented binary skein tree of depth $d$. In that case
Theorem~\ref{prop:td-leading} gives $p_d(a)=(-1)^{q}a^{2q-d}$ for some
$0\le q\le d$, so $p_d(1)=(-1)^q\in\{1,-1\}$. By
Convention~\ref{conv:poly} we have $P_K(1,z)=\nabla_K(z)$, hence
$p_d(1)=c_d$, and the first assertion follows. For the second, the computation
of item~$40$ of Section~\ref{sec:network} shows that if
$\delta=\deg_z\nabla_K$ then $\spD(K)=\delta$ and the extreme coefficient of
$\Delta_K$ equals $c_{\delta}$ up to the unit $\pm t^{r}$ of
Convention~\ref{conv:poly}. Thus $\spD(K)=d$ gives $\delta=d$ and
$|c_d|$ is the absolute value of the extreme Alexander coefficient, while
$\spD(K)<d$ gives $c_d=0$.
\end{proof}

\begin{remark}[Strict position of the three tests]\label{rem:three-tests}
The three obstructions of this section are strictly nested. Corollary~\ref{cor:td-drop}
is the case $c_d=0$ of Corollary~\ref{cor:td-conway}, and
Corollary~\ref{cor:td-conway} is the specialisation at $a=1$ of
Theorem~\ref{prop:td-leading}. Both inclusions are strict on the present table:
$\kn{8}{}{20}$ and $\kn{8}{}{21}$ have $d=4$, with $c_4=1$ for
$\kn{8}{}{20}$ and $c_4=-1$ for $\kn{8}{}{21}$, so they pass
Corollary~\ref{cor:td-conway} and, among the three
tests stated here, are detected only by the full monomial condition of
Theorem~\ref{prop:td-leading}, while $\kn{7}{}{3}$ has $c_4=2\neq0$ and so is
detected by Corollary~\ref{cor:td-conway} but not by
Corollary~\ref{cor:td-drop}.
Passing any of the three tests does not prove that $\td(K)=d$.
\end{remark}

\begin{theorem}[Eleven exact skein depths]\label{thm:eleven-depths}
For
\[
K\in\{7_3,7_5,8_4,8_6,8_8,8_{11},8_{13},8_{14},8_{15},8_{20},8_{21}\},
\]
one has $\td(K)=5$.
\end{theorem}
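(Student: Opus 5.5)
The plan is to combine an upper bound with a lower bound. For the upper bound, I will import the depth-five upper bounds reported in \cite{Jab26}: for each of the eleven knots there is an explicit ordinary oriented binary skein tree of depth $5$, so $\td(K)\le 5$. If any of these is missing from \cite{Jab26}, I will construct a depth-five tree by hand. Resolve one well-chosen crossing of a minimal diagram so that both children have previously known depth at most $4$, for example a twist knot, a $2$-bridge knot of lower crossing number, or a two-component link whose tree is standard. This check is routine but must be done knot by knot.

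For the lower bound, I will show that each of these knots has $d=\degPz(K)=4$ and that $p_4(a)\notin\mathcal M_4=\{a^{-4},-a^{-2},1,-a^{2},a^{4}\}$. Theorem~\ref{prop:td-leading} then gives $\td(K)\ge 5$. By Remark~\ref{rem:mirror-Md}, the choice of mirror does not matter. I will read the HOMFLY--PT polynomials from KnotInfo after substituting $v=a^{-1}$, as in Convention~\ref{conv:poly}. The $z$-degree is read off directly, and $d\le 2g$ together with $d\ge\spD$ constrains it: for these genus-two or lower-$d$ cases, check that $d$ is exactly $4$, not $2$.

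The work then splits into three cases. In the first, $\spD(K)<4$, so Corollary~\ref{cor:td-drop} applies; I expect some of the knots with small Alexander span relative to $P$ to fall here. In the second, the extreme Alexander coefficient has absolute value at least $2$, for instance $7_3$ with $c_4=2$, and Corollary~\ref{cor:td-conway} applies. In the third, which covers $8_{20}$ and $8_{21}$ as noted in Remark~\ref{rem:three-tests}, $c_4=\pm1$. Here I will display the full top coefficient $p_4(a)$ and observe that it is not a single monomial of the form $(-1)^q a^{2q-4}$; for instance, it is a sum of several $a$-powers whose values at $a=1$ total $\pm1$.

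The main obstacle is not conceptual but bookkeeping. The first task is to confirm that $\degPz=4$ for each knot; otherwise the theorem would only yield $\td\ge d+1<5$. The second is to make sure the imported depth-five trees are genuinely ordinary oriented binary skein trees in the sense of Appendix~\ref{app:defs}. If some knot had $d=2$, the obstruction would give only $\td\ge3$, and I would instead need an iterated argument applied to the children. So I will first tabulate $d$ and $p_d(a)$ for all eleven knots and verify that $d=4$ throughout.
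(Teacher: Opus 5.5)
Your proposal matches the paper's proof: the depth-five trees of \cite{Jab26} give $\td(K)\le5$, and Theorem~\ref{prop:td-leading} with $d=4$ and $p_4(a)\notin\mathcal M_4$ gives $\td(K)\ge5$; your three-way split via Corollaries~\ref{cor:td-drop} and~\ref{cor:td-conway} is only a finer way of organising that membership check. A few of your predictions are wrong, though none of them affects the argument: all eleven knots have $\spD=4$, so the degree-drop case is empty and $d=4$ is already forced by $4=\spD\le\degPz\le\td\le5$ together with evenness (note that $\degPz\le 2g$ is not a valid general bound, since Morton's bound is $\degPz\le 2\gcan$), and for $8_{20}$ and $8_{21}$ the top coefficients $a^{2}$ and $-a^{-4}$ are single monomials that fail only the sign requirement, because the members of $\mathcal M_4$ in those degrees are $-a^{2}$ and $a^{-4}$.
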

\begin{proof}
The upper bounds $\td\leq5$ are supplied by the depth-five trees recorded for
these eleven knots in the skein-depth tables of \cite{Jab26}, obtained there by an explicit
finite search; they are the upper endpoints imported in
Subsection~\ref{subsec:td-computation}. Their recomputed HOMFLY--PT polynomials
have $z$-degree four, and the leading coefficients listed in
Subsection~\ref{subsec:td-computation} lie outside $\mathcal M_4$, so
Theorem~\ref{prop:td-leading} gives the matching lower bounds $\td\geq5$.
Neither bound depends on the choice of root diagram.
\end{proof}

\section{Database}\label{sec:db}

The database \textsf{NewDB5} accompanying this paper has one row for each prime
knot through thirteen crossings. It records the $33$ quantities appearing as
vertices of Figure~\ref{fig:graph}, two knot-identifier columns, and the leading
coefficient $p_d(a)$ of Section~\ref{sec:main-results}. Each invariant cell
carries an integer interval, printed as an exact value when its endpoints agree;
the conventions for printing are fixed in Remark~\ref{rem:sheet}.

\paragraph{Standing hypothesis on the source bounds.}
All statements obtained by interval propagation are conditional on the validity
of the initial intervals: they are assertions about knot invariants only under
the assumption that each initial interval contains the invariant it represents.
In particular, \emph{certified by the current bounds} in
Section~\ref{sec:candidates} means certified from the recorded intervals under
that assumption. Closure of an assignment under the rule set constrains the
recorded intervals against one another; it does not establish an imported
value.

For knots up to $13$ crossings, the initial intervals for the following $19$
quantities are taken from the KnotInfo tables \cite{KnotInfo}:
\[
c,\ 2u,\ 2g,\ 2\br-2,\ 2b-2,\ |\sigma|,\ \alpha-2,\ \spD,\ \lceil \spV/2\rceil,\
\spFa,
\]
\[
2g_4,\ |s|,\ 2|\tau|,\ 2\cl_4,\ 2\cl,\ \gds,\ m-1,\ \spPa,\ \degPz.
\]
The six quantities $c_3$, $2c_\Delta$, $\td$, $\tr$, $2a$, and $2\uc$ are
taken from the references cited in this paper. The remaining eight quantities
$2\gcan$, $2\gfree$, $2\gr$, $2\urw$, $2\us$, $\Ord_v$, $\ub$, and $\ulb$ are
left unknown when no source value is available.

\subsection{Alternating knots and elementary initialisation rules}
\label{sec:altinit}
We first impose one classical exact evaluation. If $K$ is alternating, then
Seifert's algorithm applied to a reduced alternating diagram produces a minimal
genus Seifert surface \cite{Crowell,Murasugi58}, so that $\gcan(K)=g(K)$; since
$g(K)\le \gfree(K)\le \gcan(K)$ holds for every knot, this forces
\begin{equation}\label{eq:alt}
\gcan(K)=\gfree(K)=g(K)\;\;\;\text{for every alternating knot }K.
\end{equation}
We therefore initialise $2\gcan$ and $2\gfree$ with the imported value $2g$ on
the $6729$ alternating knots of the table.

Since all displayed relations are used only for non-trivial knots, and every
knot of the database is non-trivial, we additionally initialise the a priori
lower bounds
\[
\ub \ge 1,\;\;\; \ulb \ge 1,\;\;\; \Ord_v \ge 1.
\]
The first two hold because a non-trivial knot is neither the unknot nor a
completely split unlink, so at least one band surgery is needed in either case;
the third holds because the torsion order vanishes exactly for the unknot
\cite{JMZ}. Together with the evenness of $\ub$ recorded below, the first bound
yields $\ub \ge 2$ for every knot of the database; through the lower-bound
propagation of relation $47$ and the evenness of $2\br-2$, the third bound also
recovers $2\br-2 \ge 2$ on the knots whose bridge number is not imported
exactly. These lower bounds are imposed together with the $47$ arrows.

The parity constraints used in the propagation are the following.
\begin{itemize}[leftmargin=1.6em,itemsep=1pt]
\item every displayed quantity of the form $2X$ is even;
\item $|\sigma|$ and $|s|$ are even for knots;
\item $\spD$ is even, by the symmetry of the Alexander polynomial;
\item $\tr$ is even by Hanaki's results \cite{Hanaki10,Hanaki14};
\item $\ub$ is even, because each oriented band surgery changes the number of
link components by one, whereas both the initial knot and the final unknot have
one component.
\end{itemize}
No parity restriction is imposed here on $\gds$, $\ulb$, or $\Ord_v$.

\paragraph{Attribution of the torsion-order initialisation.}
The non-triviality rule $\Ord_v(K)\ge1$ uses knot Floer homology's detection
of the unknot, not only the definition of torsion order
\cite{OzsSzaDetect,JMZ}. Indeed, torsion order zero makes the torsion
summand of $HFK^-(K)$ vanish. Since its free summand has rank one,
the standard hat/minus exact sequence then gives rank-one
$\widehat{HFK}(K)$, which detects the unknot. Thus, the rule applies to the
non-trivial prime knots in the table.

\subsection{Known ribbon knots}
\label{sec:ribboninit}
For knots, the definition of $\urw$ in Appendix~\ref{app:defs} gives
\[
 K\text{ ribbon}\;\;\Longleftrightarrow\;\;\urw(K)=0.
\]
Ribbonness also gives $\gr(K)=0$, and a ribbon knot is smoothly slice, hence
$\us(K)=\uc(K)=\cl_4(K)=g_4(K)=0$. For $\uc$, the reason is that a slice
knot is smoothly concordant to the unknot, whose unknotting number is zero.
These are definitional consequences for a known ribbon knot, not an
assumption of the Slice--Ribbon Conjecture.

KnotInfo explicitly records that all known slice prime knots through
thirteen crossings are known to be ribbon \cite{KnotInfoRibbon}; its smooth
four-genus documentation identifies the provenance, including the
thirteen-crossing ribbon list supplied by Manolescu and the work of
Gukov--Halverson--Manolescu--Ruehle and Dunfield--Gong
\cite{KnotInfoSlice,GHMR,DunfieldGong}. We use this status statement together
with the exact smooth-slice entries of the table.
There are $511$ such rows, distributed as follows:
\[
\begin{array}{c|rrrrrrr|r}
c&6&8&9&10&11&12&13&\text{Total}\\\midrule
\text{known ribbon rows}&1&3&3&14&30&107&353&511.
\end{array}
\]
The $158$ rows through twelve crossings agree in number with the completed
classification of prime knots through twelve crossings, in which the non-ribbon
cases are excluded by the known slice obstructions and the remaining $158$ knots
were exhibited as ribbon by earlier hand and machine searches; this count and
its provenance are recorded in \cite{GHMR}, whose thirteen- and
fourteen-crossing tables agree with those of \cite{DunfieldGong}. Rows with an unresolved
interval starting at zero are not selected. The ribbon status is imported from
the cited sources; no ribbon disk certificate is reconstructed here.

On these $511$ rows we impose $2\urw=0$, and $2\uc=0$ by their smooth
sliceness. Relations $25$ and $30$ then force $2\us=2\gr=0$, and relations
$12$ and $46$ force $2\cl_4=0$. Ribbonness does not force $\ub$, $\ulb$,
$\Ord_v$, or $\gds$ to vanish, and none of these four columns is constrained
by it: for example, the row of $6_1$ has $2\urw=2\gr=2\us=2\uc=2\cl_4=0$,
while $\ub=2$ and $\gds=1$.

\subsection{Interval propagation}\label{sec:prop}
For a fixed knot $K$, every quantity $X$ is stored as an integer interval
\[
X(K)\in [L_X(K),U_X(K)]\cap \Z,
\]
where an endpoint may initially be infinite. Exact values correspond to
$L_X=U_X$. If the graph contains the established inequality $X\ge Y$, the
elementary propagation step is
\begin{equation}\label{eq:prop}
L_X \longleftarrow \max\{L_X,L_Y\},\;\;\; U_Y\longleftarrow \min\{U_Y,U_X\}.
\end{equation}
Whenever a proved congruence condition $X\equiv r\pmod m$, $m\ge1$, is
available, replace the stored interval by the \emph{interval hull} of
$[L_X,U_X]\cap(r+m\Z)$. Thus, a finite lower endpoint is rounded up to the
least admissible integer and a finite upper endpoint down to the greatest
admissible integer; infinite endpoints remain infinite. An empty intersection is
reported as a contradiction. The admissible set itself need not be an interval:
the hull, together with the persistent congruence rule, is the representation
used here. This includes the stated parity constraints.

A schedule is \emph{fair} if every rule is revisited indefinitely until
termination. Repeated complete sweeps give such a schedule; termination is
certified by a complete sweep in which no endpoint changes.

\begin{proposition}[Soundness and termination of propagation]\label{prop:sound}
Assume that every initial interval contains the true value of the corresponding
invariant, that every true value is finite, and that every inequality and
congruence condition used by the algorithm is valid. Then every interval
produced by \eqref{eq:prop} and by congruence projection still contains the true
value. For a finite database, every run has only finitely many strict endpoint
updates. A fair run reaches a common fixed point after finitely many steps; in
particular, repeated complete sweeps terminate. An arbitrary unfair run need
not reach a common fixed point.
\end{proposition}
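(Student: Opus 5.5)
Soundness is an invariant maintained by induction on the number of update steps, using only the two kinds of rules. Suppose before a step every stored interval contains the true value, so $L_X\le X(K)\le U_X$ for all $X$.

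For an elementary step \eqref{eq:prop} attached to a valid inequality $X\ge Y$ we have
\[
L_Y\le Y(K)\le X(K),
\]
so $\max\{L_X,L_Y\}\le X(K)$. Symmetrically, $Y(K)\le X(K)\le U_X$ gives $\min\{U_Y,U_X\}\ge Y(K)$.

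For a congruence projection, the true value lies in $[L_X,U_X]\cap(r+m\Z)$, hence in its interval hull. In particular that intersection is nonempty, so no contradiction is ever reported under the hypotheses. Finiteness of true values guarantees that comparisons with infinite endpoints cause no trouble.

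For termination, I would use a potential argument. Each update only raises lower endpoints and lowers upper endpoints. By soundness, every lower endpoint is bounded above by the finite true value, and every upper endpoint is bounded below by it. I will check that an endpoint which starts infinite can become finite only by one strict update: for instance, $L_X$ can be set to the value $L_Y$. After that it moves monotonically through integers within a bounded range.

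The first step is to note that an infinite endpoint can only be replaced by a finite value already present elsewhere, or stay infinite, since congruence rounding keeps infinite endpoints infinite. The second step observes that every finite lower endpoint lies in $[m_0,X(K)]$ with $m_0$ the minimum finite initial endpoint (after rounding). Indeed, updates only copy or round upward existing values, so they never go below the initial finite minimum, minus at most $m$ from rounding. Each strict change moves an integer endpoint by at least one within this bounded range. With finitely many knots and quantities, the total number of strict updates is finite.

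A fair run must then have a tail with no strict updates. During that tail every rule is revisited and leaves all endpoints unchanged, so the final assignment is a common fixed point of all rules. Repeated complete sweeps are fair, and a sweep with no changes certifies termination.

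For the final sentence I would give a trivial counterexample: a schedule that only ever applies one rule, never a second applicable rule, stabilises without being fixed by that second rule.

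The main obstacle is the bound for the congruence rounding and the handling of infinite endpoints. The question is whether an endpoint can change infinitely often while staying finite. The boundedness by the true value is exactly what rules this out, so soundness must be proved first and then fed into termination.
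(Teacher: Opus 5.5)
Your proposal is correct and follows essentially the same route as the paper: soundness by induction over the updates, then termination because each endpoint is a monotone integer that becomes finite at most once and is then trapped by the finite true value (via soundness). Fairness then yields a common fixed point, and an enabled rule that is never applied gives the unfair counterexample; the paper's concrete instance is relation $47$ on a row whose $\Ord_v$ lower endpoint has just been raised. The only blemish is your auxiliary bound $m_0$, which is unnecessary because monotonicity already bounds a finite lower endpoint below by its first finite value. Your aside ``minus at most $m$ from rounding'' is also backwards, since congruence rounding moves lower endpoints up, but neither point affects the argument.
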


\begin{proof}
Suppose $X(K)\ge Y(K)$. Since $Y(K)\ge L_Y$, we have $X(K)\ge L_Y$, so replacing
$L_X$ by $\max\{L_X,L_Y\}$ cannot remove $X(K)$. Likewise
$Y(K)\le X(K)\le U_X$, so replacing $U_Y$ by $\min\{U_Y,U_X\}$ cannot remove
$Y(K)$. Taking the interval hull of the intersection with a congruence class known
to contain the true value preserves that value as well. Thus, every update
preserves all true values, by induction over the update sequence.

For termination, observe that lower endpoints never decrease, upper endpoints
never increase, and every strict update moves an endpoint by at least one.
Consider a single endpoint. If it is infinite it can become finite at most
once, since neither a max with a finite quantity nor an intersection with a
congruence class can return an infinite value once a finite one has been
recorded. Once finite, a lower endpoint is bounded above by the true value of
its invariant: by the soundness statement just proved the interval always
contains that value, and the value is finite by hypothesis. Dually, an upper
endpoint is bounded below by the same value. Being an integer that moves
monotonically by at least one at each strict update, such an endpoint can
therefore change only finitely many times. Since the database is
finite and carries finitely many endpoints, only finitely many strict endpoint
changes occur in total. After the last strict change the assignment is
constant. Under a fair schedule every rule is subsequently applied, so that
constant assignment is fixed by every rule. A complete unchanged sweep is a
finite stopping certificate.

Fairness is used only for the last two assertions. A schedule that stops
applying some rule $r$ after finitely many steps can become constant while $r$
is still enabled: on a row whose $\Ord_v$ lower endpoint has just been raised,
a run that never applies relation $47$ again is constant from that point on,
yet its limit does not satisfy $L_{2\br-2}\ge L_{\Ord_v}$ and is therefore not
the assignment reached by any fair run. Soundness and the finiteness of the
number of strict updates hold for every schedule.
\end{proof}

\begin{proposition}[Confluence of propagation]\label{prop:confluent}
Under the hypotheses of Proposition~\ref{prop:sound}, the fixed point reached by
the propagation does not depend on the order in which the rules are applied: any
two runs that terminate, and in which every rule has been applied at least once
after the last strict update, produce the same interval assignment. In
particular the phrase ``the fixed point of the rule set'', used throughout this
paper, is unambiguous.
\end{proposition}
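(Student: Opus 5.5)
The plan is to recast the propagation as a family of monotone, deflationary operators on a complete lattice and apply the standard chaotic-iteration argument. For a fixed knot $K$, consider the set $\mathcal A$ of all interval assignments $X\mapsto[L_X,U_X]$, with endpoints in $\Z\cup\{\pm\infty\}$. Order it by refinement: $A\sqsubseteq B$ when every interval of $A$ is contained in the corresponding interval of $B$. Add a bottom element that records a contradiction. Each rule then becomes a map $f_r\colon\mathcal A\to\mathcal A$; these are the step \eqref{eq:prop} for an arrow $X\ge Y$ and congruence hull projection for a congruence rule. I will check two properties for every $f_r$. First, $f_r$ is \emph{deflationary}, $f_r(A)\sqsubseteq A$, which is immediate because endpoints only move inward. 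Second, $f_r$ is \emph{monotone}: if $A\sqsubseteq B$ then $f_r(A)\sqsubseteq f_r(B)$. For \eqref{eq:prop} this holds because $\max$ and $\min$ are monotone in each argument. For the hull of $[L,U]\cap(r+m\Z)$ it holds because rounding $L$ up and $U$ down to the class are monotone maps, and emptiness is inherited by subintervals.

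The core step is to show that any terminating run whose final assignment $A^\ast$ has every rule applied after the last strict update equals the greatest common fixed point $G$ of all $f_r$ lying below the initial assignment $A_0$. The argument has two halves.

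For the first half, $A^\ast$ is a common fixed point. Since it is constant after the last strict update and every rule was applied afterwards, each $f_r$ fixes it. It also lies below $A_0$, by deflation.

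For the second half, $G\sqsubseteq A^\ast$. Take any common fixed point $F\sqsubseteq A_0$ and argue by induction along the run. If $F\sqsubseteq A_i$, then $F=f_r(F)\sqsubseteq f_r(A_i)=A_{i+1}$ by monotonicity. Hence $F\sqsubseteq A^\ast$.

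Together these show $A^\ast$ is the greatest common fixed point below $A_0$, which is determined by $A_0$ and the rule set alone. Two such runs therefore give the same assignment. Termination, if one wants it explicitly, is supplied by Proposition~\ref{prop:sound}. The existence of $G$ needs no Knaster–Tarski argument, since any qualifying run exhibits it. I will cite the chaotic-iteration literature for this pattern rather than presenting it as new.

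I expect the main obstacle to be the monotonicity of the congruence projection, including the infinite endpoints and the contradiction element. I will handle this by treating the empty intersection as the bottom element $\bot$, declaring $f_r(\bot)=\bot$, and checking the cases directly.
\begin{itemize}
\item If $[L,U]\subseteq[L',U']$ and the larger interval meets the class in the empty set, then so does the smaller one.
\item If the smaller interval meets the class, then its rounded endpoints stay between the rounded endpoints of the larger one.
\item Infinite endpoints are fixed points of rounding, so they cause no difficulty.
\end{itemize}
A second subtlety is that the rules act on a single row at a time. The lattice is therefore a product over rows, with rules acting coordinatewise, and the argument runs row by row.
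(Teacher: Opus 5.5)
Your proof is correct and is essentially the paper's argument. The paper likewise shows that every rule is reductive and monotone for componentwise inclusion, proves $y_\infty\subseteq x_n$ by induction along the run, and concludes by symmetry; your ``greatest common fixed point below $A_0$'' is that same induction with $F=y_\infty$, and your bottom element for contradictions is harmless extra care, since under the soundness hypotheses no interval ever becomes empty.
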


\begin{proof}
Order interval assignments by componentwise inclusion, writing $x\subseteq y$
when every interval of $x$ is contained in the corresponding interval of $y$.
Each rule $r$, whether an instance of \eqref{eq:prop} or the interval hull of
an intersection with a congruence class, defines a map on assignments that is
\emph{reductive}, $r(x)\subseteq x$, and \emph{monotone}, $x\subseteq y$ implies
$r(x)\subseteq r(y)$. Monotonicity is immediate from the formulas: if
$L_X\le L_X'$ and $L_Y\le L_Y'$ then $\max\{L_X,L_Y\}\le\max\{L_X',L_Y'\}$, and
dually for the upper endpoints. Both intersection with a fixed congruence
class and passage to interval hulls preserve inclusions.

Equivalently, if $x\subseteq y$, then for every invariant $X$ one has
$L_X(x)\ge L_X(y)$ and $U_X(x)\le U_X(y)$. Thus, the $\max$ update of a
lower endpoint preserves the required reversed endpoint inequality, while the
$\min$ update of an upper endpoint preserves the required endpoint inequality;
this is exactly the componentwise inclusion $r(x)\subseteq r(y)$.

Let $x_0=y_0$ be the common initial assignment, let $(x_n)$ and $(y_n)$ be two
terminating runs, and let $x_\infty$ and $y_\infty$ be their limits; by the
terminality assumption each limit is a fixed point of every rule. We show
$y_\infty\subseteq x_n$ for all $n$ by induction. For $n=0$ this is
$y_\infty\subseteq y_0=x_0$. If $y_\infty\subseteq x_n$ and $x_{n+1}=r(x_n)$, then
monotonicity gives $r(y_\infty)\subseteq r(x_n)=x_{n+1}$, and $r(y_\infty)=y_\infty$
because $y_\infty$ is a fixed point of $r$. Hence, $y_\infty\subseteq x_\infty$, and
by symmetry $x_\infty\subseteq y_\infty$.
\end{proof}

\paragraph{Relation to standard fixed-point iteration.}
The preceding order-independence argument is an instance of fair chaotic
iteration for monotone constraint propagators; see
Apt~\cite[Section~2.1, Chaotic Iteration Theorem and Stabilization Corollary]{Apt99}.
In the arXiv version these are Theorem~2.1 and Corollary~1, respectively.
We include the proof to specify our interval conventions, not to claim a new
general confluence theorem.
Termination here uses the bounded integer endpoint changes proved above,
including the convention for upper endpoints that remain infinite.
The hull representation and fair stopping rule have been fixed above. Neither
order independence nor termination can certify a source interval that was
incorrect before the iteration.

\begin{remark}[Finiteness]\label{rem:finite}
The finiteness hypothesis of Proposition~\ref{prop:sound} holds for all $33$
vertex quantities: each is bounded above on every knot, for instance by
$\gr\le g$, $\us\le u$, $\urw\le u$, $\ulb\le \ub\le 2g$, and $\Ord_v\le \br-1$.
Here $\gr\le g$ holds because a Seifert surface of genus $g$, pushed into the
interior of $B^4$ rel boundary, becomes a properly embedded surface whose radial
function has only minima and saddles, hence a ribbon surface.
\end{remark}

\begin{remark}[Spreadsheet convention]\label{rem:sheet}
The propagation is always carried out on the complete interval assignment. The
table is a rendering of the \emph{complete} assignment in
which the interval of a quantity is printed only when a finite upper bound is
known: a blank cell means that no finite upper bound is available, and the
finite lower bound carried by that cell, for instance $2c_\Delta \ge c_3$
through relation $36$ or $2\cl\ge 2g$ through relation $4$, is then not printed.
All fixed-point statements in this paper refer to the completed interval
assignment, in which every blank cell carries its propagated finite lower bound
together with the upper endpoint $+\infty$; on the printed cells the two
descriptions agree.
All candidate-witness counts below use this complete assignment, not zero
lower bounds substituted for blank cells.
\end{remark}

\begin{remark}[A constraint on the imported columns]\label{rem:kmt}
By the theorem of Kauffman, Murasugi and Thistlethwaite
\cite{Kauffman87,Murasugi87,Thistlethwaite}, one has
$\spV(K)\le c(K)$ for every knot, with equality for alternating knots.
The diagram-level converse requires care. Murasugi proves that if $L$ is a
prime link and $D$ is a non-alternating diagram of $L$, then
\[
 \spV(L)<c(D).
\]
See \cite[Theorem~3]{Murasugi87}. The primeness hypothesis is essential: Murasugi
explicitly notes that equality can occur for a non-alternating diagram of a
composite knot. On the other hand, his link-type characterisation
\cite[Theorem~4]{Murasugi87}, specialised to knots, gives
\[
 \spV(K)=c(K)\;\;\Longleftrightarrow\;\; K\text{ is alternating}.
\]
Indeed, in the knot case the connected sums appearing in Murasugi's theorem
are connected sums of alternating knots and hence are alternating.

Every row of the present table is a prime knot. Thus, on a non-alternating
row, a minimal diagram is non-alternating and Theorem~3 gives
$\spV(K)<c(K)$, while on an alternating row Theorem~2 gives
$\spV(K)=c(K)$. Consequently the dichotomy used in the database is valid on
all rows, even though equality for an arbitrary connected diagram does not in
general characterise reduced alternating diagrams. Since $c$ and
$\lceil\spV/2\rceil$ are both imported columns, this constrains them
independently of the propagation and of every relation of
Figure~\ref{fig:graph}: on the $6729$ alternating rows one must have
\[
 \left\lceil\frac{\spV(K)}2\right\rceil
 =\left\lceil\frac{c(K)}2\right\rceil,
\]
and
\[
 \left\lceil\frac{\spV(K)}2\right\rceil
 \le \left\lceil\frac{c(K)}2\right\rceil,
\]
on every row. Equality of the rounded quantities is not confined to the
alternating rows, which is compatible with the strict inequality
$\spV(K)<c(K)$ on prime non-alternating knots: the rounding
$\spV\mapsto\lceil\spV/2\rceil$ can absorb a difference of one when $c$ is
even, and the un-rounded span $\spV$ is not a column of \textsf{NewDB5}.
\end{remark}

\begin{remark}[An identity constraining the Kauffman column]\label{rem:kauffman}
For a non-split alternating link $L$ one has
\[
\spFa(L)\ \ge\ c(L).
\]
This is the alternating case of Thistlethwaite's lower estimate for the
$a$-spread of the Kauffman polynomial of an adequate link
\cite{Thistlethwaite88,Thistlethwaite88b}: a reduced alternating diagram is
adequate, and there the graph-theoretic lower bound supplied by that estimate is
the number of crossings of the diagram. It is stated in this form for
alternating links by Cromwell \cite{Cromwell98}, and the specialisation is
carried out explicitly in \cite{ParkSeo}. Combining it with relations $37$ and $45$ of
Figure~\ref{fig:graph} gives the Morton--Beltrami bound
$\spFa(K)\le \alpha(K)-2$ \cite{MortonBeltrami} and the Bae--Park bound
$\alpha(K)\le c(K)+2$ \cite{BaePark}. Combining these bounds with the
preceding estimate gives, for every alternating knot $K$,
\[
c(K)\ \le\ \spFa(K)\ \le\ \alpha(K)-2\ \le\ c(K),
\]
so that both displayed inequalities are equalities:
\begin{equation}\label{eq:altF}
\spFa(K)=c(K)\;\;\;\text{and}\;\;\; \alpha(K)=c(K)+2
\;\;\;\text{for every alternating knot }K.
\end{equation}
Since $c$, $\spFa$ and $\alpha-2$ are all imported columns, \eqref{eq:altF}
constrains them independently of the propagation and of Remark~\ref{rem:kmt}:
on the $6729$ alternating rows $\spFa=c$ and $\alpha-2=c$, and on every row
$\spFa\le \alpha-2\le c$.

This constraint is not a consequence of closure under the rule set. The vertex
$\spFa$ is a sink of Figure~\ref{fig:graph}: it is the target of relation $37$
and the source of no arrow. An entry of $\spFa$ that is \emph{too small}
therefore does not violate any arrow inequality merely because it is too
small. Its actual value still has to satisfy nonnegativity, congruences, and
any separately imposed exact evaluations. The same caution applies to each of the eight sinks:
$m-1$, $\Ord_v$, $|s|$, $|\sigma|$, $\spFa$, $\spPa$, $\lceil \spV/2\rceil$, and
$2|\tau|$. For these columns, lowering an entry cannot be detected by the arrow
inequalities alone, provided that all separate initial and congruence
constraints remain satisfied. External identities such as \eqref{eq:altF}
or Remark~\ref{rem:kmt} can supply additional checks.
\end{remark}

\subsection{Structure of the database}
The database records the fixed point of the rule set described above: the
initial intervals of Subsections~\ref{sec:altinit} and~\ref{sec:ribboninit},
together with the skein-depth bounds of
Section~\ref{sec:main-results}, closed under the
$47$ arrows and the stated parity constraints. It has $12965$ data rows and
$33$ invariant columns, and all its intervals are nonempty. Being a fixed point
of the propagation rule \eqref{eq:prop}, it satisfies, for every
arrow $X\ge Y$ of Table~\ref{tab:rel} and every row on which both cells are
printed, the endpoint inequalities $L_X\ge L_Y$ and $U_Y\le U_X$; every endpoint
of a quantity subject to a parity rule of Subsection~\ref{sec:altinit} is of the
required parity, and the initial lower bounds $\ub\ge 2$, $\ulb\ge 1$,
$\Ord_v\ge 1$, and $2\br-2\ge 2$ hold in every row. Blank cells, in the sense of
Remark~\ref{rem:sheet}, occur only in the $2c_\Delta$ and $2\cl$ columns. The
graph has exactly three sources, namely $c$, $2\cl$ and $2c_\Delta$; the column
$c$ is imported exactly, whereas for the other two no finite upper bound can
reach the vertex through \eqref{eq:prop} at all, and no blank cell anywhere in
the table admits a finite propagated upper bound, exactly as
Remark~\ref{rem:sheet} requires.

We emphasise what closure under the rule set does and does not give. It says
that the assignment is closed: no interval contradicts a relation of
Table~\ref{tab:rel}, and no further propagation step is available. It does
\emph{not} certify the imported values themselves. A decrease at a sink or an increase at a source cannot, by itself, violate
an arrow inequality. This observation concerns arrows alone: it does not
permit violating an imported exact value, a non-triviality bound, a congruence,
or an independent identity. The two external
identities of Remarks~\ref{rem:kmt} and~\ref{rem:kauffman} exist for this
reason.

The arrow set is acyclic, each of the $47$ arrows is transitively irredundant,
and the nine retained candidates of Section~\ref{sec:candidates} stay
independent of the arrow set even when added jointly; see
Appendix~\ref{app:table}. The table contains $4630$ exact entries in the $\ub$
column and $6863$ exact entries in each of the $2\gcan$ and $2\gfree$ columns.
These are totals in the table, not novelty claims. The eight columns without
imported source values are constrained by the initialisation rules and the
propagation; in particular, the alternating evaluation \eqref{eq:alt}
initialises $2\gcan$ and $2\gfree$ exactly on $6729$ alternating knots, so that
$6863-6729=134$ of the exact entries in each of those two columns are on
non-alternating rows. Their derivation by propagation is part of the reported
construction, rather than a historical fact recoverable from the final values
alone.

The $36$ unknotting numbers and the $72$ doubly slice genera listed below are
exact entries of the supplied snapshot, under the standing source-bound
hypothesis. Their description in the construction as improvements over the
initial source intervals is retrospective: those pre-propagation intervals and
row-level derivation logs are not part of the supplied snapshot. Thus, the
present audit reproduces the final entries, but not their priority or the
historical change count. The same distinction applies to the $443$ and $341$
change counts in Subsection~\ref{subsec:td-computation}.

Both lists below record the rows that the construction reports as improvements;
neither is the list of all rows carrying the stated value. In the supplied
snapshot $5162$ rows have the exact value $2u=4$ and $1981$ rows the exact value
$\gds=4$.

\medskip
\noindent The following $36$ rows, all at thirteen crossings, carry the exact
value $u=2$:
$\kn{13}{n}{128}$, $\kn{13}{n}{152}$, $\kn{13}{n}{365}$, $\kn{13}{n}{391}$,
$\kn{13}{n}{492}$, $\kn{13}{n}{839}$, $\kn{13}{n}{842}$, $\kn{13}{n}{967}$,
$\kn{13}{n}{976}$, $\kn{13}{n}{993}$, $\kn{13}{n}{1082}$, $\kn{13}{n}{1084}$,
$\kn{13}{n}{1146}$, $\kn{13}{n}{1345}$, $\kn{13}{n}{1478}$, $\kn{13}{n}{1524}$,
$\kn{13}{n}{1846}$, $\kn{13}{n}{2453}$, $\kn{13}{n}{2456}$, $\kn{13}{n}{2612}$,
$\kn{13}{n}{2681}$, $\kn{13}{n}{2689}$, $\kn{13}{n}{2729}$, $\kn{13}{n}{3251}$,
$\kn{13}{n}{3263}$, $\kn{13}{n}{3639}$, $\kn{13}{n}{3729}$, $\kn{13}{n}{3811}$,
$\kn{13}{n}{3853}$, $\kn{13}{n}{3881}$, $\kn{13}{n}{4029}$, $\kn{13}{n}{4381}$,
$\kn{13}{n}{4623}$, $\kn{13}{n}{4648}$, $\kn{13}{n}{4672}$, $\kn{13}{n}{4964}$.

\medskip
\noindent The following $72$ rows, all at thirteen crossings, carry the exact
value $\gds=4$:
$\kn{13}{a}{5}$, $\kn{13}{a}{109}$, $\kn{13}{a}{146}$, $\kn{13}{a}{720}$, $\kn{13}{a}{793}$,
$\kn{13}{a}{820}$, $\kn{13}{a}{1020}$, $\kn{13}{a}{1099}$, $\kn{13}{a}{1332}$, $\kn{13}{a}{1419}$,
$\kn{13}{a}{1476}$, $\kn{13}{a}{1544}$, $\kn{13}{a}{1581}$, $\kn{13}{a}{1784}$, $\kn{13}{a}{1901}$,
$\kn{13}{a}{2143}$, $\kn{13}{a}{2151}$, $\kn{13}{a}{2233}$, $\kn{13}{a}{2372}$, $\kn{13}{a}{2377}$,
$\kn{13}{a}{2436}$, $\kn{13}{a}{2507}$, $\kn{13}{a}{2604}$, $\kn{13}{a}{2607}$, $\kn{13}{a}{2875}$,
$\kn{13}{a}{3105}$, $\kn{13}{a}{3122}$, $\kn{13}{a}{3157}$, $\kn{13}{a}{3484}$, $\kn{13}{a}{3513}$,
$\kn{13}{a}{3930}$, $\kn{13}{a}{4005}$, $\kn{13}{a}{4034}$, $\kn{13}{a}{4122}$, $\kn{13}{a}{4225}$,
$\kn{13}{a}{4295}$, $\kn{13}{a}{4422}$, $\kn{13}{a}{4458}$, $\kn{13}{a}{4531}$, $\kn{13}{a}{4727}$,
$\kn{13}{a}{4778}$, $\kn{13}{a}{4807}$, $\kn{13}{a}{4857}$, $\kn{13}{n}{128}$, $\kn{13}{n}{152}$,
$\kn{13}{n}{365}$, $\kn{13}{n}{391}$, $\kn{13}{n}{492}$, $\kn{13}{n}{839}$, $\kn{13}{n}{993}$,
$\kn{13}{n}{1146}$, $\kn{13}{n}{1345}$, $\kn{13}{n}{1478}$, $\kn{13}{n}{1524}$, $\kn{13}{n}{1846}$,
$\kn{13}{n}{2612}$, $\kn{13}{n}{2681}$, $\kn{13}{n}{2689}$, $\kn{13}{n}{2729}$, $\kn{13}{n}{3251}$,
$\kn{13}{n}{3263}$, $\kn{13}{n}{3639}$, $\kn{13}{n}{3729}$, $\kn{13}{n}{3811}$, $\kn{13}{n}{3853}$,
$\kn{13}{n}{3881}$, $\kn{13}{n}{4029}$, $\kn{13}{n}{4381}$, $\kn{13}{n}{4623}$, $\kn{13}{n}{4648}$,
$\kn{13}{n}{4672}$, $\kn{13}{n}{4964}$.

\subsection{Skein-depth computations and their scope}
\label{subsec:td-computation}

\paragraph{Normalisation of the coefficient data.}
The coefficients through ten crossings are those of Stoimenow's HOMFLY table,
in its stated notation conventions \cite{StoimenowTables}; independent
tabulations of the same polynomials, in their own conventions, are
\cite{KnotInfoPolynomials,KnotAtlas}. The Rolfsen numbering
is used through ten crossings, with the Perko duplication removed and the
$10_{83}/10_{86}$ convention agreeing with KnotInfo.
If $h_d(l)$ is the coefficient of $m^d$ in that convention,
the conversion is $l=i/a$, $m=-iz$ with $i^2=-1$, up to mirroring the
representative; these substitutions turn the relation
$lP_{L_+}+l^{-1}P_{L_-}+mP_{L_0}=0$ into the mirror of
Convention~\ref{conv:poly}. For a knot only even powers of $l$ and of $m$ occur,
so $d$ is even, every exponent $e$ with $c_e\neq0$ is even, and $(d+e)/2$ is an
integer. Thus, a coefficient $c_e l^e$ contributes
$c_e(-1)^{(d+e)/2}a^{-e}$ to $p_d(a)$.
The encoding lists only the coefficients of the non-vanishing powers; the
exponents themselves must be restored from the stated conventions before the
test is applied.
Since $d$ is even, $a\mapsto a^{-1}$ preserves $\mathcal M_d$,
so the obstruction is independent of the chosen mirror representative.
In the source encoding, passing the test is equivalent to
$h_d(l)=l^e$ for an even $e$ with $|e|\leq d$.

\paragraph{Scope of the test.}
In the reported construction, the full leading-coefficient condition was
applied to all $249$ prime knots through ten crossings in the normalisation
just described. The supplied coefficients reproduce $151$ failures of the
necessary condition. The reported $97$ strict improvements concern the earlier
lower endpoints, which are not retained in the final workbook. Through
thirteen crossings Corollary~\ref{cor:td-drop} is applied to all $12965$ rows,
using the recorded $z$-degrees and Alexander spreads. The obstruction produces
lower bounds only, so no upper endpoint changes. The reported historical totals are $443$ strict improvements, of which $11$
made an interval exact:
\[
\begin{array}{c|r|r}
\text{Crossings}&\text{Strictly improved lower bounds}&\text{New exact entries}\\
\midrule
3\text{--}6&0&0\\
7&2&2\\
8&9&9\\
9&0&0\\
10&86&0\\
11&10&0\\
12&39&0\\
13&297&0\\
\midrule
\text{Total}&443&11
\end{array}.
\]
Here ``new'' means new relative to the source intervals in the reported
construction, not a priority claim. The final workbook alone does not certify
these change counts; reproducing them requires the original input snapshot. The $346$ degree-drop improvements at $11$--$13$
crossings are a subset of the improvements available at those crossing numbers,
since the full coefficient test is not applied there. Passing the necessary
condition of Theorem~\ref{prop:td-leading} is never a proof that
$\td(L)=d$.

\begin{remark}[Scope of the Conway test]\label{rem:conway-scope}
Corollary~\ref{cor:td-conway} uses only the Conway polynomial, so, unlike the
full leading-coefficient condition, it can be applied unconditionally to every
row of the table: the extreme Alexander coefficient is an imported datum of the
same provenance as the $\spD$ column, and no conversion of a HOMFLY--PT
coefficient string is involved. On the $249$ prime knots through ten crossings,
where the full condition of Theorem~\ref{prop:td-leading} fails on $151$ rows,
the degree-drop condition of Corollary~\ref{cor:td-drop} fires on none, whereas
Corollary~\ref{cor:td-conway} fires on $132$; the remaining $19$ rows carry
$c_d=\pm1$ and therefore pass the specialisation $a=1$ while failing the full
polynomial condition. This numerical split is computed from the stored
low-crossing coefficient and Conway data. The independent recomputation in
Remark~\ref{rem:recompute} covers all knots through nine crossings; the
contribution of the ten-crossing source data to this split remains source-bound
in the same sense as the other imported data.

Among those $19$ rows are $\kn{8}{}{20}$ and $\kn{8}{}{21}$, so two of the
eleven exact depths of Theorem~\ref{thm:eleven-depths} require information
beyond Corollary~\ref{cor:td-conway}. This must not be read as saying that
every one-point specialisation fails. Indeed, for $d=4$ and $a=2$, the
specialised allowed set is
\[
 \{2^{-4},-2^{-2},1,-2^2,2^4\},
\]
whereas $p_4(2)=4$ for $\kn{8}{}{20}$ and $p_4(2)=-2^{-4}$ for
$\kn{8}{}{21}$; neither value lies in that set.

Applying Corollary~\ref{cor:td-conway} at eleven to thirteen crossings therefore
strictly enlarges the set of rows reached by Corollary~\ref{cor:td-drop} there,
at no cost in hypotheses. It is not imposed in the frozen fixed point reported
above; like Remark~\ref{rem:fullcoeff} it is recorded as an available extension
of the input, and it produces lower bounds only, so no upper endpoint and no
printed column other than $\td$ and the blank $2c_\Delta$ cells can change.
\end{remark}

The historical construction reports no changed upper endpoint and no other
changed \emph{printed} cell after closure under the $47$ arrows and parity. It
also reports $341$ strengthened lower endpoints of $2c_\Delta$ through relation
$43$, all in cells that remain blank. These are retrospective comparison
counts, distinct from the directly reproducible fact that the supplied
assignment is already closed and has no empty interval.

\paragraph{Eleven certified exact entries.}
The reported earlier interval was $[4,5]$ and $d=4$ for each of
\[
 7_3,\ 7_5,\ 8_4,\ 8_6,\ 8_8,\ 8_{11},\ 8_{13},\
 8_{14},\ 8_{15},\ 8_{20},\ 8_{21}.
\]
Each has $\operatorname{td}=5$. Coefficients, up to $a\leftrightarrow a^{-1}$,
are
\[
\begin{array}{c|l}
K&p_4(a)\\ \midrule
7_3,\ 7_5&a^{-4}+a^{-6}\\
8_4&-a^2-1\\
8_6,\ 8_{11},\ 8_{14}&-a^{-2}-a^{-4}\\
8_8,\ 8_{13}&a^2+1\\
8_{15}&a^{-4}+2a^{-6}\\
8_{20}&a^2\\
8_{21}&-a^{-4}
\end{array}.
\]
None belongs to
$\mathcal M_4=\{a^{-4},-a^{-2},1,-a^2,a^4\}$.
The first nine knots have more than one leading monomial; the final two
fail the sign requirement even though their leading coefficient is a
monomial. The lower bound is independent of the root diagram. The upper bound of $5$
is witnessed by the explicit finite computer search.

\begin{remark}[An internal check on the coefficient column]\label{rem:pdcheck}
Specialising at $a=1$ compares column \texttt{AJ} with the Alexander column and
uses no further input. By Convention~\ref{conv:poly}, $p_d(1)$ is the
coefficient of $z^{d}$ in $\nabla_K$, so $p_d(1)=0$ exactly when
$\spD(K)<\degPz(K)$, and $p_d(1)$ is the leading coefficient of the Conway
polynomial when $\spD(K)=\degPz(K)$. The supplied snapshot satisfies this
equivalence on all $12965$ rows: the rows with a degree drop are exactly the
rows whose stored coefficient vanishes at $a=1$, and there are $346$ of them,
distributed as $10$, $39$ and $297$ at eleven, twelve and thirteen crossings.
This check does not certify the individual coefficients; it excludes the
mismatches between the two columns that a conversion error would produce.
\end{remark}

\begin{remark}[Independent recomputation on the low-crossing range]
\label{rem:recompute}
The entries of column \texttt{AJ} through nine crossings were recomputed
from scratch, bypassing the tabulated sources entirely. For each of the $84$
prime knots with at most nine crossings a braid word was taken from the
standard tables, the HOMFLY--PT polynomial was evaluated through the Ocneanu
trace on the Hecke algebra $H_n(q)$ in the normalisation of
Convention~\ref{conv:poly}, and the result was expanded in
$z=s-s^{-1}$. The recomputation reproduces, on every one of those rows, the
stored value of $\degPz$, of $\spPa$, of $\spD$, of $|\sigma|$, and the
stored leading coefficient $p_d(a)$ up to the mirror substitution
$a\leftrightarrow a^{-1}$. In particular the eleven leading coefficients
displayed in Subsection~\ref{subsec:td-computation}, and hence the lower
bounds of Theorem~\ref{thm:eleven-depths}, do not depend on the conversion
described in the normalisation paragraph above.

Two labelling conventions must be applied before such a comparison is
meaningful, and both are the ones already fixed there. First, the names
$10_{83}$ and $10_{86}$ are interchanged between the Rolfsen-based tables
and some computational packages. Second, after the removal of the Perko
duplicate the names $10_{162},\dots,10_{165}$ of the present table
correspond to $10_{163},\dots,10_{166}$ of the unreduced list. With these
two conventions applied, the imported $2|\tau|$ column also agrees with a
direct knot Floer computation on all $249$ prime knots through ten
crossings.
\end{remark}

\begin{remark}[The complete coefficient column is a separate input extension]
\label{rem:fullcoeff}
Although the reported construction used the full coefficient test only through
ten crossings, column \texttt{AJ} of the supplied snapshot contains an entry for
every one of its $12965$ rows. Parsing those Laurent polynomials exactly gives
\[
\begin{array}{c|r|r|r}
c&\text{populated entries}&p_d\notin\mathcal M_d&
  \text{additional lower endpoints}\\\midrule
3\text{--}10&249&151&0\\
11&552&362&351\\
12&2176&1486&1447\\
13&9988&7143&6846\\\midrule
\text{Total}&12965&9142&8644
\end{array}.
\]
The last column compares with the \emph{supplied final snapshot}, not with the
historical input intervals. Conditional on the higher-crossing entries being
the correctly normalised leading coefficients of the named knots, replacing
$L_{\td}$ by $\max(L_{\td},d+1)$ at these $8644$ rows and closing under the
same rules strengthens a further $6795$ lower endpoints of $2c_\Delta$.
All of the latter cells remain blank under the printing convention; no upper
endpoint changes, no interval becomes empty, and no further skein depth
becomes exact. No other printed column changes.

This is an exact check of the \emph{stored coefficient strings}, not an
independent computation of all higher-crossing HOMFLY--PT polynomials. Their
source and conversion history must be supplied before promoting this extension
to unconditional knot data. It is not included in the frozen fixed point or
candidate-witness counts reported here, and the supplied workbook is left
unchanged.
\end{remark}

\paragraph{Completeness of the newly exact list for this one-step test.}
A further exact value from the one-step bound and an unchanged input upper
endpoint requires $U_{\operatorname{td}}=d+1$ and
$L_{\operatorname{td}}\leq d$. Besides the eleven knots above, the only
such row is $13\mathrm a_{4878}$, with $d=12$ and interval $[12,13]$.
It is $T(2,13)$ \cite{StoimenowTables}; its top coefficient is $a^{12}$ or $a^{-12}$ and satisfies
the necessary condition. Thus, no additional interval can become exact
by this particular one-step obstruction while all input upper endpoints
are held fixed.
Separately, two already established results each give
$\operatorname{td}(13\mathrm a_{4878})=12$: the upper estimate
$\operatorname{td}(K)\leq c(K)-1$ in~\ref{S1}, and the exact torus-knot
evaluation of \ref{S6}, since $13\mathrm a_{4878}=T(2,13)$. Neither
improvement is one of the $443$ changes and neither is imposed here; the
recorded interval $[12,13]$ is valid but not sharp. The scope of \ref{S6} on
the present table is narrow: the eight torus knots through thirteen crossings
are
\[
3_1,\ 5_1,\ 7_1,\ 9_1,\ \kn{11}{a}{367},\ \kn{13}{a}{4878},\
8_{19},\ 10_{124},
\]
being $T(2,p)$ for $p=3,5,7,9,11,13$ together with $T(3,4)$ and $T(3,5)$, and
the $\td$ entry is already exact and equal to $2g$ on seven of them, the single
exception being $\kn{13}{a}{4878}$, whose recorded interval is $[12,13]$.
Imposing \ref{S6} therefore changes exactly one cell of the supplied snapshot,
namely the one also resolved by~\ref{S1}; no lower endpoint moves and no
interval becomes empty.

\subsection{Sharper forms of displayed relations, and two exact evaluations}
\label{sec:next}

Several of the displayed arrows are weakenings of sharper published statements,
in the sense already illustrated in Remark~\ref{rem:norm}. We collect them here.
None of them is imposed in the propagation used here; each could be imposed
exactly as the congruence conditions of Subsection~\ref{sec:prop} are, without
enlarging the vertex set.

\begin{enumerate}[label=(S\arabic*),leftmargin=2.6em]
\item\label{S1} Relation $1$ is displayed as $c \ge \td$, whereas
$\td(K)\le c(K)-1$ \cite{Cromwell89,Jab26}.
\item\label{S2} Relation $26$ is displayed as $c \ge \tr$, whereas Hanaki proves
$\tr(K)\le c(K)-1$ for every non-trivial knot \cite{Hanaki10,Hanaki14}.
\item\label{S3} Relation $47$ is displayed as $2\br-2 \ge \Ord_v$, whereas
Juh\'asz--Miller--Zemke prove $2\Ord_v(K) \le 2\br(K)-2$ \cite{JMZ}.
\item\label{S4} The composite of relations $24$ and $32$ gives $2u \le c$,
whereas $2u(K)\le\tr(K)\le c(K)-1$
\cite{Henrich,Hanaki10,Hanaki14}.
\item\label{S5} Relations $15$, $17$ and $44$ give $\Ord_v\le 2u$,
whereas Alishahi and Eftekhary prove the sharper bound
$\Ord_v(K)\le u(K)$ \cite[Theorem~1.1]{AlishahiEftekhary}. Equivalently,
$2\Ord_v\le 2u$ is a side constraint between stored quantities.
\item\label{S6} Relations $1$ and $43$ bound $\td$ from above, by $c$ and by
$2c_\Delta$, and relations $27$ and $39$ bound it from below, by $2g$ and
$\degPz$, whereas for torus knots the depth is
known exactly: a strictly positive $n$-braid word of length $\ell$ has closure
of depth $\ell-n+1$ \cite{Kaplan15}, so that
$\td(T(p,q))=(p-1)(q-1)=2g(T(p,q))$ for $2\le p<q$ with $\gcd(p,q)=1$. This is
an exact evaluation on a recognisable family, in the style of \eqref{eq:alt}
and \eqref{eq:trexact}, and can be imposed as an initialisation rule without
enlarging the vertex set.
\end{enumerate}

Two exact evaluations, also due to Hanaki \cite{Hanaki10,Hanaki14}, can be
imposed as initialisation rules in the style of \eqref{eq:alt}:
\begin{equation}\label{eq:trexact}
\tr(K)=2 \iff K \text{ is a twist knot},\;\;\;
\tr(K)=c(K)-1 \iff K \text{ is a } (2,p)\text{-torus knot}.
\end{equation}
The second determines the $\tr$ column exactly on the $(2,p)$-torus knots
of the table, and the first does so on its twist knots.

Here knots are non-trivial, and the torus parameter satisfies $|p|\ge3$.
For the first statement, Hanaki's one-component classification
\cite[Theorem~1.10]{Hanaki10} reduces a projection of trivializing number two
to a twist projection after removing nugatory crossings. Its non-trivial
resolutions are twist knots; conversely, a standard twist projection has
trivializing number two, and a non-trivial knot cannot have trivializing
number zero.
For the second statement, take a diagram $D$ with $c(D)=c(K)$. The inequalities
$\tr(K)\le\tr(D)\le c(D)-1$ force equality throughout whenever
$\tr(K)=c(K)-1$. Hanaki's extremal classification
\cite[Theorem~1.11]{Hanaki10} identifies the underlying projection as the
closed two-braid projection. Opposite successive braid crossings would cancel,
contrary to crossing minimality; hence $K=T(2,p)$. Conversely, for such a knot
$2g(K)=|p|-1=c(K)-1$, and $2g\le\tr\le c-1$ gives equality.
These arguments use imported projection classifications and elementary
consequences, not interval propagation.

\begin{remark}\label{rem:effect}
The chain $c \ge \tr \ge 2\gcan \ge 2\gfree$ formed by relations $26$, $28$ and
$5$ already supplies a finite upper bound in the $2\gcan$ and $2\gfree$ columns
on every row of the table, even though neither column has imported source
values. Imposing~\ref{S2} replaces the bound $2\gfree \le c$ by
$2\gfree \le c-1$ throughout, and hence, by the evenness of $2\gfree$, by
$2\gfree \le 2\lfloor (c-1)/2\rfloor$.

Quantitatively, on the supplied snapshot these side rules act as follows; in
each of the three experiments no lower endpoint moves, no interval becomes
empty, and no column other than the ones named changes. Imposing~\ref{S1}
lowers the upper endpoint of $\td$ on $12413$ rows and makes exactly one further
skein depth exact, namely $\td(13\mathrm a_{4878})=12$. Imposing~\ref{S2} lowers
the upper endpoint of $\tr$ on $2176$ rows and hence, through relations $28$ and
$5$, the upper endpoints of $2\gcan$ and of $2\gfree$ on $825$ rows each, and
makes $78$ further $\tr$ entries exact. Imposing~\ref{S3} and~\ref{S5} together
lowers the upper endpoint of $\Ord_v$ on $12952$ rows and makes $2117$ of its
entries exact. That no interval becomes empty is a consistency check of the
imported columns against the sharper published statements, independent of the
propagation imposed here.
\end{remark}

\section{Candidate inequalities and partial results}\label{sec:candidates}

Because \textsf{NewDB5} contains intervals as well as exact values, one must
distinguish empirical compatibility from verification. For a proposed inequality
$X\le Y$ and a knot with
\[
X\in[L_X,U_X],\;\;\; Y\in[L_Y,U_Y],
\]
the inequality is \emph{contradicted by the current bounds} if $L_X>U_Y$, and it
is \emph{certified by the current bounds} if $U_X\le L_Y$. In the remaining
cases the bounds are merely compatible with the inequality. An equality witness
is certified when both values are exact and equal; a strict witness $X<Y$ is
certified, for example, when $U_X<L_Y$.

The screen used here is the following. A candidate inequality must be between
vertices of Figure~\ref{fig:graph}; it must not follow from the transitive
closure of the $47$ established relations; it must not be contradicted by any
knot in \textsf{NewDB5}; and there should be at least one certified equality
witness and at least one certified strict witness. One should also discard
either orientation of a pair $X,Y$ when examples are known with $X(K)<Y(K)$ and
with $X(J)>Y(J)$. Such examples include $(2g,\degPz)$ from \cite{Morton},
$(\degPz,2\cl)$ and $(2\gcan,2\cl)$ from \cite{BrittJensen}, $(2\gfree,2\cl)$
from \cite{Moriah}.

All nine candidates below pass the recorded-interval and graph-theoretic
screen: each has an exact-equality witness and a strict witness, none is
contradicted by the recorded intervals, and none follows by transitivity from
the $47$ established arrows even after adjoining the other eight candidates.
These tests do not establish universal validity. None of the candidates
(c1)--(c9) is imposed in the database propagation.
\begin{align*}
&\text{(c1)}\ \ 2\br-2 \le \spFa, &&\text{(c6)}\ \ 2\cl_4 \le 2c_\Delta,\\
&\text{(c2)}\ \ 2\br-2 \le \tr, &&\text{(c7)}\ \ \lceil \spV/2\rceil \le \td,\\
&\text{(c3)}\ \ 2\br-2 \le 2c_\Delta, &&\text{(c8)}\ \ 2|\tau| \le \degPz,\\
&\text{(c4)}\ \ 2\gfree \le \td, &&\text{(c9)}\ \ |s| \le \degPz,\\
&\text{(c5)}\ \ 2\gr \le 2\us.
\end{align*}

The following witness counts use the \emph{complete} assignment of
Remark~\ref{rem:sheet}, including propagated lower bounds in cells with infinite
upper endpoint. The strict column counts $U_X<L_Y$; the certified column counts
all $U_X\le L_Y$ and does not require either invariant to be exact. The last
column counts $L_X>U_Y$. All counts use the complete \textsf{NewDB5}
fixed point, without adjoining any of the candidate inequalities.
\begin{center}\small
\begin{tabular}{lrrrr}
\toprule
Comparison & Exact equality & Strict & All certified & Contradicted\\
\midrule
(c1) & $3$ & $12909$ & $12930$ & $0$\\
(c2) & $20$ & $11734$ & $12485$ & $0$\\
(c3) & $1$ & $12334$ & $12538$ & $0$\\
(c4) & $273$ & $153$ & $6865$ & $0$\\
(c5) & $4723$ & $7$ & $5069$ & $0$\\
(c6) & $14$ & $11716$ & $12821$ & $0$\\
(c7) & $38$ & $9023$ & $11206$ & $0$\\
(c8) & $741$ & $12224$ & $12965$ & $0$\\
(c9) & $741$ & $12224$ & $12965$ & $0$\\
\bottomrule
\end{tabular}
\end{center}
Every candidate has both an exact-equality witness and a strict witness;
the trefoil is the single exact-equality witness of (c3). All these numerical
statements retain the source-bound qualification of Section~\ref{sec:db}.
Among the exact equalities recorded for (c5), the $511$ on known ribbon rows
come from $\gr=\us=0$ by definition; they therefore provide no evidence for
the general Slice--Ribbon problem.

\begin{remark}\label{rem:c4witness}
For (c4) the certified witnesses deserve a separate word. By \eqref{eq:alt} the
$2\gfree$ column is exact on the $6729$ alternating rows, and (c4) is already a
theorem there: an alternating diagram is homogeneous in the sense of
Definition~\ref{def:homogeneous}, so Proposition~\ref{prop:c4hom} applies. These rows therefore provide no evidence
beyond that theorem. Among the non-alternating rows, consider the $134$ on
which $2\gfree$ is exact in the supplied snapshot. On that set (c4) is certified with
exact equality on $46$ rows, the first being $\kn{8}{}{19}$, and strictly on
$35$ rows, the first being $\kn{8}{}{20}$. The remaining $53$ rows also certify
the weak inequality: they satisfy $U_{2\gfree}=L_{\td}<U_{\td}$. They certify
neither exact equality nor strictness, but must not be classified as merely
compatible. The $\td$ column is exact on $70$ of these $134$ rows; this
exactness is not a prerequisite for certifying (c4). In total,
$6865-6729=136$ non-alternating rows certify (c4); the other two have non-exact
$2\gfree$ and lie outside this $134$-row subset.

Since fibredness is not a column of \textsf{NewDB5}, splitting these witnesses
further into fibred and non-fibred rows requires an additional source column.
Witnesses on fibred rows are covered by Proposition~\ref{prop:c4}, and those on
homogeneous rows by Proposition~\ref{prop:c4hom}. Only rows simultaneously
non-fibred and non-homogeneous can provide evidence beyond these proved cases.
By Remark~\ref{rem:effect} every row already carries a finite upper bound for
$2\gfree$; the unresolved rows are limited by the relative interval endpoints,
not simply by the absence of exact values.
\end{remark}

\subsection{Bridge and genus criteria}

The following results apply established inequalities and structural theorems
to the candidate comparisons. A sufficient condition obtained by composing
known arrows is distinguished from a new inequality valid for all knots.

\begin{proposition}\label{prop:c2c3}
Candidates \textup{(c2)} and \textup{(c3)} hold for every knot $K$ with
$\br(K)\le 3$.
\end{proposition}

\begin{proof}
The candidates, like the graph, are asserted for non-trivial knots; we note in
passing that for the unknot $\br=1$ and $2\br-2=\tr=2c_\Delta=0$, so that both
assertions hold there as well. Let $K$ be non-trivial.

Hanaki proved that $\tr(K)$ is even and $\tr(K)\ge 2$ \cite{Hanaki14}. If
$\br(K)=2$, this gives
\[
2\br(K)-2 = 2 \le \tr(K).
\]
If $\br(K)=3$ and $\tr(K)=2$, Hanaki's classification of trivializing-number-two projections implies that
$K$ is a twist knot \cite[Theorem~1.10]{Hanaki10}; see also \cite{Hanaki14}. A twist knot has a
standard two-bridge presentation, contradicting $\br(K)=3$. Hence, the even
integer $\tr(K)$ is at least $4=2\br(K)-2$. This proves (c2) for $\br\le 3$.

For (c3), if $\br(K)=2$ then $c_\Delta(K)\ge 1$, since a zero-delta-crossing
knot diagram represents the unknot. Thus, $2\br(K)-2=2\le 2c_\Delta(K)$. If
$\br(K)=3$ and $c_\Delta(K)=1$, replace the single delta-crossing block by its
defining three ordinary crossings. This gives an ordinary diagram of $K$ with
three crossings, hence $c(K)\le 3$. The only non-trivial knot with at most three
crossings is the trefoil, which has bridge number two. This is again a
contradiction. Therefore, $c_\Delta(K)\ge 2$, and $2\br(K)-2=4\le 2c_\Delta(K)$.
\end{proof}

\begin{proposition}\label{prop:c2c3gc}
Candidates \textup{(c2)} and \textup{(c3)} hold for every non-trivial knot $K$
satisfying
\[
\br(K)\le \gcan(K)+1.
\]
\end{proposition}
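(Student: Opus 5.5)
The plan is to compose established arrows of Figure~\ref{fig:graph} with the hypothesis, so that no new topology is needed. The hypothesis $\br(K)\le\gcan(K)+1$ is equivalent to
\[
2\br(K)-2\ \le\ 2\gcan(K),
\]
and both candidates should then follow from chains of arrows that start at $2\gcan$.

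For (c2), I will use relation $28$, which states $\tr\ge 2\gcan$. Combining it with the displayed inequality gives
\[
2\br(K)-2\le 2\gcan(K)\le\tr(K),
\]
which is (c2). Nothing further is needed, beyond the standing assumption that $K$ is non-trivial, which is required because the arrows of the graph are asserted only for non-trivial knots.

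For (c3), I will follow the chain through the triple-crossing number. Relation $3$ gives $2\gcan\le c_3$, and relation $36$ gives $c_3\le 2c_\Delta$. Hence
\[
2\br(K)-2\le 2\gcan(K)\le c_3(K)\le 2c_\Delta(K).
\]
The main point to check is the direction and normalisation of these two arrows as recorded in Appendix~\ref{app:table}, namely that the vertex labels are $2\gcan$, $c_3$ and $2c_\Delta$ with no further rescaling. If relation $36$ were instead stated as $c_\Delta$ against $c_3$ without the factor $2$, I would fall back on the chain $2\gcan\le\tr$ (relation $28$) together with the $\td$ route via relation $43$. That route looks weaker, however, so I expect the direct chain $3$--$36$ to be the intended one.

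I do not anticipate a genuine obstacle. The only care needed is the non-triviality convention, and possibly a remark that this criterion covers cases such as alternating knots with few bridges relative to genus, where $\gcan=g$ by \eqref{eq:alt}. In that setting the hypothesis can be checked from the columns $2\br-2$ and $2g$.
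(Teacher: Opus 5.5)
Your proof is correct and is exactly the paper's argument: relation $28$ gives $2\br(K)-2\le 2\gcan(K)\le\tr(K)$ for (c2), and relations $3$ and $36$ give $2\br(K)-2\le 2\gcan(K)\le c_3(K)\le 2c_\Delta(K)$ for (c3). Your fallback through relation $43$ would not have worked, because no arrow of the graph gives $\td\ge 2\gcan$. It is not needed, since relation $36$ is indeed $2c_\Delta\ge c_3$.
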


\begin{proof}
For (c2), relation $28$ gives $\tr(K)\ge 2\gcan(K)$, and therefore
\[
2\br(K)-2\le 2\gcan(K)\le \tr(K).
\]
For (c3), relations $36$ and $3$ give
\[
2c_\Delta(K)\ge c_3(K)\ge 2\gcan(K),
\]
so the same hypothesis yields
\[
2\br(K)-2\le 2\gcan(K)\le 2c_\Delta(K).
\]
\end{proof}

\begin{remark}
Consequently, a counterexample to either (c2) or (c3) must satisfy
$\br(K)\ge \gcan(K)+2$. This is a useful structural filter because it depends
only on established arrows of Figure~\ref{fig:graph}, not on the numerical
screen.
\end{remark}

\begin{corollary}\label{cor:c2c3genus}
Candidates \textup{(c2)} and \textup{(c3)} hold for every non-trivial knot $K$
satisfying
\[
\br(K)\le g(K)+1.
\]
This class is closed under connected sum and contains every torus knot. Hence
(c2) and (c3) hold for every connected sum of torus knots.
\end{corollary}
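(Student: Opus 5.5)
The plan is to reduce the corollary to Proposition~\ref{prop:c2c3gc} and then check the two closure claims separately. For the reduction, the chain of relations $6$ and $5$ gives $g(K)\le\gfree(K)\le\gcan(K)$. Hence $\br(K)\le g(K)+1$ implies $\br(K)\le\gcan(K)+1$, and Proposition~\ref{prop:c2c3gc} yields (c2) and (c3) for every non-trivial knot in the class.

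For closure under connected sum, I would use two classical additivity facts. Schubert's theorem gives $\br(K_1\#K_2)=\br(K_1)+\br(K_2)-1$. Additivity of the Seifert genus gives $g(K_1\#K_2)=g(K_1)+g(K_2)$. If $\br(K_i)\le g(K_i)+1$ for $i=1,2$, then
\[
\br(K_1\#K_2)\le g(K_1)+g(K_2)+1=g(K_1\#K_2)+1.
\]
The class is therefore closed under connected sum. The unknot lies in the class trivially, since $1\le 0+1$. Non-triviality of a connected sum of non-trivial factors is automatic from genus additivity.

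For torus knots, take $T(p,q)$ with $2\le p<q$ coprime. Schubert's computation gives $\br(T(p,q))=p$. The Seifert genus is $g=(p-1)(q-1)/2$. I need to check $p\le (p-1)(q-1)/2+1$, which is equivalent to $2(p-1)\le (p-1)(q-1)$, i.e.\ $q\ge 3$. This holds because $q>p\ge2$. Mirror images change neither invariant, so negative torus knots are covered as well.

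The final sentence of the corollary then follows by induction on the number of summands. The only real point of care is citing the bridge-number formulas correctly: Schubert's additivity of $\br$ minus one, and $\br(T(p,q))=\min(p,q)$. I regard both as standard imports rather than obstacles. The arithmetic for the torus family is a one-line check, with equality exactly at $T(2,3)$.
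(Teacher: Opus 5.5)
Your proposal is correct and follows the paper's own argument: you reduce to Proposition~\ref{prop:c2c3gc} via $g\le\gcan$, verify $\br(T(p,q))=p\le (p-1)(q-1)/2+1$ (the paper writes this as $g+1-\br=(p-1)(q-3)/2\ge 0$), and use Schubert's connected-sum formula for $\br$ together with additivity of genus. Your extra remarks (non-triviality of the sum via genus additivity, mirror invariance, and equality only at $T(2,3)$) are harmless refinements of the same proof.
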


\begin{proof}
Since $g(K)\le \gcan(K)$, the hypothesis implies
$\br(K)\le \gcan(K)+1$, so Proposition~\ref{prop:c2c3gc} applies.
For $K=T(p,q)$ with $2\le p<q$ and $\gcd(p,q)=1$, Schubert's bridge-number
formula \cite{Schubert} and the genus computation from the standard positive
braid surface \cite{Cromwell89} give
\[
\br(K)=p,\;\;\; g(K)=\frac{(p-1)(q-1)}2,
\]
and therefore
\[
g(K)+1-\br(K)=\frac{(p-1)(q-3)}2\ge0.
\]
Finally, Schubert's connected-sum formula
$\br(K_1\#K_2)=\br(K_1)+\br(K_2)-1$ \cite{Schubert}, together with
$g(K_1\#K_2)=g(K_1)+g(K_2)$ \cite{Schubert49}, shows that
$\br\le g+1$ is preserved by connected sum. Thus, all connected sums of torus
knots satisfy the hypothesis.
\end{proof}

\subsection{Free surfaces and ribbon comparisons}

\begin{definition}[Homogeneous diagrams]\label{def:homogeneous}
The signed Seifert graph of an oriented diagram has one vertex for each
Seifert circle and one edge, carrying the crossing sign, for each crossing.
A block is a maximal connected subgraph without a cut vertex; in particular,
a bridge edge is a block. The diagram is \emph{homogeneous} if each block has
edges of only one sign. A knot is homogeneous if it
admits such a diagram \cite{Cromwell89}. Alternating diagrams and positive
diagrams are homogeneous.
\end{definition}

\begin{lemma}[Common homogeneous-knot input]\label{lem:homogeneous-data}
If $K$ is homogeneous, then
\[
\gcan(K)=\gfree(K)=g(K),\;\;\;
\spD(K)=\degPz(K)=2g(K).
\]
\end{lemma}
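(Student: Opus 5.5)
The plan is to work on a homogeneous diagram $D$ of $K$ and take the canonical surface $S_D$ produced by Seifert's algorithm. Its genus is $g(S_D)=(c(D)-s(D)+1)/2$, where $s(D)$ is the number of Seifert circles. Cromwell's theorem for homogeneous links \cite{Cromwell89} says two things. First, $S_D$ has minimal genus, so $g(S_D)=g(K)$. Second, the Conway polynomial of a homogeneous diagram has $z$-degree exactly $1-\chi(S_D)=c(D)-s(D)+1=2g(K)$. These facts, combined with the chains of relations already in Figure~\ref{fig:graph}, should give both identities. I will therefore take the following from \cite{Cromwell89} as imported input: Seifert's algorithm on a homogeneous diagram gives a minimal genus surface, and the leading Conway coefficient is nonzero of degree $2g$.

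For the genus equalities, the canonical genus is by definition the minimum of $g(S_{D'})$ over all diagrams $D'$ of $K$. Hence $\gcan(K)\le g(S_D)=g(K)$. Relations $5$ and $6$ give $g\le\gfree\le\gcan$, and together these force $\gcan=\gfree=g$. This is the same argument as for \eqref{eq:alt}, with Crowell--Murasugi replaced by Cromwell's homogeneous version.

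For the polynomial equalities, item $40$ gives $\spD(K)=\deg_z\nabla_K$. Cromwell's degree statement gives $\deg_z\nabla_K=2g(K)$, so $\spD=2g$. The upper bound on the HOMFLY degree comes from relation $7$ (Morton's bound), $\degPz\le 2\gcan$, which now reads $\degPz\le 2g$. The lower bound comes from relation $40$, $\spD\le\degPz$, which gives $2g\le\degPz$. Thus $\degPz=2g$. If I want to avoid relation $7$, I can instead use the inequality $\degPz\le c(D)-s(D)+1$ for every diagram, obtained from Morton's skein-theoretic argument.

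The main obstacle is making sure Cromwell's result really delivers the Conway degree statement, that is, that the extreme coefficient of $\nabla_D$ in degree $c-s+1$ does not vanish for a homogeneous diagram. I would cite this directly from \cite[Theorem~4 and Corollary~2.1]{Cromwell89}, where the top coefficient is shown to be a product of nonzero top coefficients over the blocks (a Murasugi-sum decomposition). If a self-contained justification were needed, I would decompose $S_D$ along the blocks of the Seifert graph as an iterated Murasugi sum of surfaces coming from single-sign blocks. Each such piece is a positive or negative diagram, whose top Conway coefficient is nonzero by the positive-diagram case. The leading coefficient is then multiplicative under Murasugi sum, so the total is nonzero.
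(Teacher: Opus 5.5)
Your proposal is correct and is essentially the paper's own argument: Cromwell's homogeneous-link theorem supplies a minimal-genus canonical surface and $\deg_z\nabla_K=2g(K)$, which forces $g\le\gfree\le\gcan\le g$. The Alexander--Conway identity of item~$40$ and Morton's bound (relation~$7$) then give $2g=\spD\le\degPz\le2\gcan=2g$; your Murasugi-sum sketch of why the top Conway coefficient is nonzero is an optional extra, since the paper simply imports that fact from Cromwell.
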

\begin{proof}
Cromwell's homogeneous-link theorem gives a minimal-genus canonical Seifert
surface and $\deg_z\nabla_K=2g(K)$ \cite{Cromwell89}. Thus
$g\le\gfree\le\gcan\le g$, and the Alexander--Conway substitution gives
$\spD=2g$. Specialisation and Morton's degree bound \cite{Morton} now yield
$2g=\deg_z\nabla_K\le\degPz\le2\gcan=2g$.
This lemma assembles the standard inputs used below; it is not claimed as a
new homogeneous-link theorem.
\end{proof}

\begin{proposition}\label{prop:c4}
Candidate \textup{(c4)} holds for every fibred knot.
\end{proposition}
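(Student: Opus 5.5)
We need $2\gfree(K)\le\td(K)$ for every fibred knot $K$. The plan is to place $\td$ above $2g$ and show that for fibred knots $\gfree$ collapses to $g$.

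The lower end is already in the graph. Relation $27$ gives $\td(K)\ge 2g(K)$ for every knot; this is the Scharlemann--Thompson bound \cite{SchThom}. So it suffices to show $\gfree(K)=g(K)$ when $K$ is fibred. Relation $6$ already gives $g\le\gfree$ in general, so only the reverse inequality $\gfree(K)\le g(K)$ needs proof.

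For that step, let $F$ be a fibre surface of $K$. The standard fact is that a fibre is a minimal-genus Seifert surface, so $\operatorname{genus}(F)=g(K)$. Its complement $S^3\setminus N(F)$ is homeomorphic to $F\times[0,1]$, because the complement of the fibre in the fibration over $S^1$ is the fibre cut open. Now $F$ is a compact surface with nonempty boundary, hence a handlebody of genus $2g+1-1=2g$, and $F\times I$ is therefore a handlebody. Consequently $\pi_1(S^3\setminus F)\cong\pi_1(F)$ is free, so $F$ is a free Seifert surface of genus $g(K)$. This gives $\gfree(K)\le g(K)$, and combined with relation $6$ we get $\gfree(K)=g(K)$.

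Putting the two pieces together,
\[
 2\gfree(K)=2g(K)\le\td(K).
\]
There is no real obstacle; the only care needed is to cite correctly the facts that the fibre has minimal genus and that its complement is a product. Both are classical, going back to Stallings and Neuwirth, and can be quoted. I would also remark that no fibred hypothesis is used beyond producing a free surface of minimal genus. The same proof therefore shows that (c4) holds whenever $\gfree=g$, which prepares the homogeneous case: there Lemma~\ref{lem:homogeneous-data} gives $\gfree=g$ directly.
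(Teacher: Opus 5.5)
Your proof is correct and is essentially the paper's argument: relation $27$ supplies $2g(K)\le\td(K)$, and the fibre $F$ is a minimal-genus Seifert surface whose complement is $F\times I$, a handlebody of genus $2g(K)$, so $\gfree(K)=g(K)$ and hence $2\gfree(K)\le\td(K)$. One small wording slip: the genus-$2g$ handlebody is $F\times I$, not the surface $F$ itself, which only has free fundamental group of rank $2g+1-1$.
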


\begin{proof}
Let $K$ be fibred with compact fibre $F$, a Seifert surface of genus $g(K)$,
and let $X=S^3\setminus\operatorname{int}\nu K$ be the knot exterior. Cutting
the mapping torus $X$ open along the fibre gives $F\times I$. Equivalently,
the exterior of a regular neighbourhood of $F$ in $S^3$ is a handlebody
homeomorphic to $F\times I$. Hence its fundamental group is free of rank
$2g(K)$, so $F$ is a free Seifert surface and $\gfree(K)\leq g(K)$. The
reverse inequality $g(K)\leq\gfree(K)$ is immediate from the definitions;
therefore $\gfree(K)=g(K)$. The established skein-depth bound for a knot is
relation $27$ of Figure~\ref{fig:graph}, namely $2g(K)\le \td(K)$
\cite{SchThom}. Therefore, $2\gfree(K)\le \td(K)$.
\end{proof}

\begin{proposition}\label{prop:c4hom}
Candidate \textup{(c4)} holds for every homogeneous knot.
\end{proposition}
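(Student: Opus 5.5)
The argument mirrors the proof of Proposition~\ref{prop:c4}. We replace the fibration input, which gave $\gfree=g$, by the homogeneous-knot evaluation of Lemma~\ref{lem:homogeneous-data}. Then we close the chain with an established lower bound for the skein depth.

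\emph{Step 1.} Let $K$ be a homogeneous knot. By Lemma~\ref{lem:homogeneous-data} we have $\gfree(K)=g(K)$. This equality comes from Cromwell's theorem that the canonical Seifert surface of a homogeneous diagram has minimal genus. That gives $\gcan(K)=g(K)$, and then $g\le\gfree\le\gcan$ forces equality throughout.

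\emph{Step 2.} Invoke relation~$27$ of Figure~\ref{fig:graph}, namely $2g(K)\le\td(K)$ \cite{SchThom}, which is valid for every knot. Combining it with Step~1 gives
\[
2\gfree(K)=2g(K)\le\td(K),
\]
which is candidate (c4). As a cross-check, Lemma~\ref{lem:homogeneous-data} also gives $\degPz(K)=2g(K)$. Relation~$39$ ($\td\ge\degPz$) therefore yields the same bound independently of \cite{SchThom}.

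\emph{Main obstacle.} There is no real difficulty once the lemma is available. The only point needing care is that Lemma~\ref{lem:homogeneous-data} is applied to the knot type: $K$ merely \emph{admits} a homogeneous diagram, and Cromwell's theorem is applied to that diagram. The resulting equalities are statements about invariants of $K$, so no minimality of the diagram is required. The non-trivial knot hypothesis of the graph is harmless here. For the unknot both sides of (c4) vanish, so the statement holds for all homogeneous knots.
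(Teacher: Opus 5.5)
Your proposal is correct and follows the paper's proof exactly: Lemma~\ref{lem:homogeneous-data} gives $\gfree(K)=g(K)$, and relation~$27$ gives $2\gfree(K)=2g(K)\le\td(K)$. Your cross-check is also valid and does not use relation~$27$: the same lemma gives $\degPz(K)=2g(K)$, and relation~$39$ then yields $\td(K)\ge 2g(K)$.
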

\begin{proof}
Lemma~\ref{lem:homogeneous-data}, applied to a homogeneous diagram in the sense
of Definition~\ref{def:homogeneous}, gives $\gfree=g$. Relation $27$ therefore
gives $2\gfree(K)=2g(K)\le\td(K)$.
\end{proof}

The next proposition is a reformulation using the classical comparison
$\gr\leq\urw$, already included as relation~$30$; compare
\cite[Definition~5 and Lemmas~9--10, pp.~319--320]{Shibuya}.
Under the smooth Slice--Ribbon Conjecture the target classes in the definitions
of $\us$ and $\urw$ coincide, so these two crossing-change distances are equal.
The explicit smooth construction below also records the geometric reason for
the required ribbon-genus bound.

\begin{proposition}\label{prop:c5}
The universal validity of candidate \textup{(c5)}, equivalently
\[
\gr(K)\le \us(K)\;\;\; \text{for every knot } K,
\]
is equivalent to the smooth Slice--Ribbon Conjecture.
\end{proposition}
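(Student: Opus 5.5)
We prove both implications separately; each reduces to the observation that a slice knot $K$ has $\us(K)=0$, while $\gr(K)=0$ holds exactly when $K$ is ribbon.

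\emph{(c5) implies the Slice--Ribbon Conjecture.} Suppose $\gr(K)\le\us(K)$ for every knot $K$. Let $K$ be smoothly slice. Zero crossing changes transform $K$ into a slice knot, so $\us(K)=0$, and hence $\gr(K)=0$. A ribbon surface of genus zero with connected boundary $K$ is a ribbon disk. I would check against the definition of $\gr$ in Appendix~\ref{app:defs} that the surfaces counted there are connected with boundary $K$, so that genus zero means a disk. Then $K$ is ribbon, which is the smooth Slice--Ribbon Conjecture.

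\emph{The Slice--Ribbon Conjecture implies (c5).} Assume every smoothly slice knot is ribbon. Let $n=\us(K)$ and choose $n$ crossing changes taking $K$ to a slice knot $J$. By the assumption $J$ is ribbon, so these same crossing changes witness $\urw(K)\le n$. Together with relation~$25$ this gives $\urw(K)=\us(K)$, matching the remark that the target classes of the two distances coincide. Relation~$30$ then gives $\gr(K)\le\urw(K)=\us(K)$.

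Relation~$30$ was proved geometrically in Section~\ref{sec:network}. Each crossing change is realised by two oriented saddle moves, and these are concatenated with a ribbon disk for $J$. Reversing the cobordism so that the radial function on the composite surface has no local maxima gives a ribbon surface of genus $n$. I will cite that argument rather than repeat it. I would only point out that it applies verbatim, since its sole input is a ribbon endpoint $J$.

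\emph{Main obstacle.} The delicate step is the identification $\gr=0\iff$ ribbon. This relies on the convention that surfaces in the definition of $\gr$ are connected, or at least that a genus-zero ribbon surface with one boundary component is a disk. If the appendix allowed disconnected surfaces, I would argue that closed components can be discarded. A closed ribbon component would need a local maximum of the radial function, which is forbidden, so no such components exist.

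I would also make sure the equivalence is stated for knots rather than links, since Shibuya's invariants are defined in the knot version fixed in Convention~\ref{conv:cat}. Everything else is definitional.
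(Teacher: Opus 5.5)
Your proposal is correct and matches the paper's argument: the forward direction is identical, and your concern about connectedness is settled by the appendix definition of $\gr$, which requires connected surfaces. For the converse, you note that Slice--Ribbon makes $J$ ribbon, hence $\urw(K)\le\us(K)$, and then apply relation~$30$. The paper instead re-runs the same construction inline (two saddles per crossing change, glued to a ribbon disk for $J$), while remarking just before the proposition that this is a reformulation via relation~$30$. That relation is proved independently in Section~\ref{sec:network}, so your shortcut is not circular.
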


\begin{proof}
Assume first that $\gr(K)\le \us(K)$ for every knot. If $K$ is slice, then
$\us(K)=0$, so $\gr(K)=0$. Thus, $K$ bounds a ribbon disk and is ribbon. Hence
the Slice--Ribbon Conjecture follows.

Conversely, assume the Slice--Ribbon Conjecture; only its validity for the
single knot $J$ produced below is used. Put $n=\us(K)$ and choose $n$ crossing
changes taking $K$ to a slice knot $J$. A crossing change can be realised by two
oriented band attachments \cite[Section~1.4]{JMZ}; equivalently, it gives a
connected genus-one cobordism between the two knots containing only saddle
critical points. Reversing the chosen sequence when necessary and concatenating the
$n$ local cobordisms gives a cobordism from $J$ to $K$ with only saddle points;
it is connected, being a concatenation of connected cobordisms between knots,
and its genus is $n$, since gluing two connected cobordisms between knots along
a single circle adds genera. By Slice--Ribbon, $J$ bounds a ribbon disk. Place
the genus-$n$ saddle-only cobordism in a sufficiently thin collar of
$S^3=\partial B^4$ and glue it to this disk. The ribbon disk has no interior
radial maxima, and the added cobordism contributes only index-one critical
points; after the standard collar reparametrisation, the glued genus-$n$ surface
therefore has no interior radial maxima. It is a ribbon surface bounded by $K$.
Consequently $\gr(K)\le n=\us(K)$.
\end{proof}

\begin{remark}
Thus, (c5) should not be regarded merely as a small-crossing data conjecture: in
the smooth category it packages the full Slice--Ribbon problem.
\end{remark}

\begin{proposition}\label{prop:c6}
Candidate \textup{(c6)} holds for every knot $K$ with
$\cl_4(K)\le \gcan(K)$. In particular it holds whenever $\us(K)\le g(K)$, and
hence for every torus knot.
\end{proposition}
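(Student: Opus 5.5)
Candidate (c6) reads $2\cl_4\le 2c_\Delta$. The plan is to route it through the established chain of arrows on the canonical-genus side and then check the two sufficient conditions.

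\textbf{Step 1: the main claim.} Relations $36$ and $3$ of Figure~\ref{fig:graph} give
\[
2c_\Delta(K)\ \ge\ c_3(K)\ \ge\ 2\gcan(K),
\]
exactly as in the proof of Proposition~\ref{prop:c2c3gc}. Hence, if $\cl_4(K)\le\gcan(K)$, then
\[
2\cl_4(K)\ \le\ 2\gcan(K)\ \le\ 2c_\Delta(K).
\]
For the unknot all quantities vanish, so the non-triviality convention of the graph causes no issue.

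\textbf{Step 2: the condition $\us(K)\le g(K)$.} Relation $12$ gives $\cl_4(K)\le\us(K)$. The elementary inequality $g(K)\le\gcan(K)$ is the composite of relations $6$ and $5$. Therefore $\us(K)\le g(K)$ implies
\[
\cl_4(K)\ \le\ \us(K)\ \le\ g(K)\ \le\ \gcan(K),
\]
and Step~1 applies.

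\textbf{Step 3: torus knots.} For a non-trivial torus knot $T(p,q)$ with $2\le p<q$ and $\gcd(p,q)=1$, it suffices to show $\us\le g$. Relations $25$ and $35$ give
\[
\us\ \le\ \urw\ \le\ u.
\]
The Milnor conjecture, proved by Kronheimer--Mrowka, gives $u(T(p,q))=(p-1)(q-1)/2=g(T(p,q))$. One could instead quote the standard inequality $u\le g$ for positive braid closures, but the exact value is cleaner. Alternatively, since torus knots are positive, relation $41$ or $42$ together with $g_4=g$ could be used, but only the upper bound on $\us$ is needed, and $u=g$ supplies it. Thus $\us\le g$ and the previous step applies.

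\textbf{Expected obstacle.} The only step beyond composing arrows is the torus-knot input $u(T(p,q))\le g(T(p,q))$, which requires an outside citation. This citation needs only the elementary upper bound $u\le(p-1)(q-1)/2$, realised by explicit crossing changes in the standard braid diagram, not the full Milnor conjecture. The first thing to try is to cite this explicit crossing-change unknotting of the torus braid.
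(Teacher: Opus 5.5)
Your proof is correct and follows the paper's argument step for step. It uses the chain $2c_\Delta\ge c_3\ge 2\gcan$ from relations $36$ and $3$, then $\cl_4\le\us$ and $g\le\gcan$, then $\us\le\urw\le u\le g$ for torus knots. The only difference is the source of $u(T(p,q))\le g(T(p,q))$: the paper takes it from relation $32$ ($u\le a$) together with the elementary half $a(T(p,q))\le (p-1)(q-1)/2$ of Ozawa's ascending-number evaluation, which, as you note yourself, avoids the Milnor conjecture.
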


\begin{proof}
Relations $36$ and $3$ of Figure~\ref{fig:graph} give
\[
2c_\Delta(K)\ \ge\ c_3(K)\ \ge\ 2\gcan(K),
\]
so $c_\Delta(K)\ge \gcan(K)$, and the hypothesis yields
$2\cl_4(K)\le 2\gcan(K)\le 2c_\Delta(K)$, which is (c6).

For the second assertion, relation $12$ gives $\cl_4(K)\le \us(K)$, while
relations $5$ and $6$ give $g(K)\le \gfree(K)\le \gcan(K)$; hence
$\us(K)\le g(K)$ implies $\cl_4(K)\le \gcan(K)$. Finally, let $K=T(p,q)$ with $2\le p<q$ and $\gcd(p,q)=1$. Ozawa's torus-knot
evaluation \cite[Theorem~2.9]{Ozawa} gives $a(K)=(p-1)(q-1)/2$, and we use only
its elementary half, the upper bound
\[
a(K)\ \le\ \frac{(p-1)(q-1)}2 ,
\]
which is proved there by an explicit ascending diagram; the reverse inequality
in that theorem passes through the solution of Milnor's conjecture and is not
needed. Together with relation $32$, $u(K)\le a(K)$, and the genus
formula $g(K)=(p-1)(q-1)/2$ coming from the standard positive braid surface
\cite{Cromwell89}, this gives $u(K)\le g(K)$. Since
$\us(K)\le\urw(K)\le u(K)$, we obtain $\us(K)\le g(K)$.
Mirroring changes none of these quantities.
\end{proof}

\begin{remark}
The hypothesis of Proposition~\ref{prop:c6} is not restricted to torus knots;
for example, the sufficient condition $u(K)\le g(K)$ implies it. Failure of
this sufficient condition does not imply failure of the actual hypothesis
$\cl_4(K)\le\gcan(K)$, and still less failure of (c6). Thus, the values
$g(7_4)=1$ and $u(7_4)=2$ merely show that the elementary test $u\le g$ is
inconclusive for $7_4$; they do not decide the sharper criterion. The unresolved
universal statement (c6) is kept separate from all these sufficient tests.
\end{remark}

\subsection{Polynomial and homological criteria}

\begin{proposition}\label{prop:c1alt}
Candidate \textup{(c1)} holds for every non-trivial alternating knot.
\end{proposition}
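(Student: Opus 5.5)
The plan is to sandwich $2\br-2$ below $c$ and to use equation~\eqref{eq:altF} from Remark~\ref{rem:kauffman}, which says that $\spFa(K)=c(K)$ for every alternating knot. With that identity, (c1) for a non-trivial alternating knot reduces to the purely diagrammatic inequality
\[
2\br(K)-2\le c(K).
\]

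\emph{Step 1 (polynomial side).} Invoke \eqref{eq:altF}. It came from Thistlethwaite's lower bound $\spFa\ge c$ for reduced alternating (adequate) diagrams, combined with relations $37$ and $45$. This gives $\spFa(K)=c(K)$, so nothing new is needed here.

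\emph{Step 2 (bridge side).} Bound the bridge number by the crossing number. The route through the graph is $2\br-2\le 2b-2\le \alpha-2\le c$, using relations $22$, $14$ and $45$. Relation $14$ is Cromwell's $2b-2\le\alpha-2$, read as an arrow $\alpha-2\to 2b-2$, i.e.\ $\alpha-2\ge 2b-2$. Relation $45$ gives $c\ge\alpha-2$, and relation $22$ gives $2b-2\ge 2\br-2$. Composing these three established arrows yields $2\br-2\le c$ for every non-trivial knot, alternating or not.

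\emph{Step 3 (combine).} Chaining the two steps gives
\[
2\br(K)-2\le c(K)=\spFa(K)
\]
on every non-trivial alternating knot.

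The main point requiring care is confirming the orientation of arrow $14$ in Figure~\ref{fig:graph}. The edge is drawn from $b$ back to $\alpha$, which means $\alpha-2\ge 2b-2$; this is Cromwell's theorem $b\le \alpha/2$, i.e.\ $2b\le\alpha$, so the orientation is as needed. If one preferred to avoid the graph altogether, there is an elementary alternative for Step 2: a diagram with $c$ crossings has at most $c$ overpasses, so $\br\le c/2$ and $2\br-2\le c-2$.

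The other point to check is that \eqref{eq:altF} applies to knots, i.e.\ to non-split links, which holds. The non-triviality hypothesis is needed only because relation $45$ and Remark~\ref{rem:kauffman} are stated for non-trivial knots.
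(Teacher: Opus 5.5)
Your argument is correct and has the same structure as the paper's proof: bound $2\br-2$ above by $c$ using established arrows, then invoke \eqref{eq:altF}. The only difference is the path through the graph. The paper goes through Ozawa's ascending number, $2\br-2\le 2a\le c$ (relations $33$ and $24$), whereas you go through the braid and arc indices, $2\br-2\le 2b-2\le\alpha-2\le c$ (relations $22$, $14$, $45$). Both are valid composites of displayed arrows.

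You should drop the ``elementary alternative'', because it is false. The trefoil has $\br=2$ and $c=3$, so both $\br\le c/2$ and $2\br-2\le c-2$ fail ($2\not\le 1$). In any case ``at most $c$ overpasses'' would only give $\br\le c$. The correct bound of this type is $\br\le c/2+1$, which is exactly what the paper's chain through $2a$ provides.
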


\begin{proof}
Let $K$ be a non-trivial alternating knot. Ozawa's relations $33$ and $24$
of Figure~\ref{fig:graph} give
\[
2\br(K)-2\le2a(K)\le c(K).
\]
The alternating identity \eqref{eq:altF} of Remark~\ref{rem:kauffman},
obtained from Thistlethwaite's adequate-link estimate
\cite{Thistlethwaite88b,Cromwell98,ParkSeo} and relations $37$ and $45$,
gives $c(K)=\spFa(K)$. Therefore
\[
2\br(K)-2\le c(K)=\spFa(K).\qedhere
\]
\end{proof}

\begin{remark}
The alternating identity \eqref{eq:altF} is not part of the graph: the
displayed arrows $37$ and $45$ only give
$\spFa\le\alpha-2\le c$, in the opposite direction to the additional
lower bound used above. Proposition~\ref{prop:c1alt} therefore does not
make (c1) a transitive consequence of the established relations, and (c1)
retains its conjectural content on non-alternating knots. As explained in
Remark~\ref{rem:kauffman}, \eqref{eq:altF} constrains the imported columns;
it is not an additional propagation rule here.
\end{remark}

For the torus-knot case of (c7), a stronger skein-depth evaluation is already
known. Kaplan, Krcatovich and O'Brien prove that a strictly positive or negative
$n$-braid word of length $\ell$ has closure of depth $\ell-n+1$
\cite[Theorem~1 in the arXiv version]{Kaplan15}.
Applying this to $(\sigma_1\cdots\sigma_{p-1})^q$ gives
\[
 \td(T(p,q))=(p-1)(q-1)=2g(T(p,q))
 \;\;\; (2\leq p<q,\ \gcd(p,q)=1).
\]
Thus, the (c7) assertion below is an application of a known exact depth
calculation and the classical Jones-polynomial formula, rather than a new
torus-knot depth evaluation.

\begin{proposition}\label{prop:torus}
Candidates \textup{(c2)}, \textup{(c3)} and \textup{(c7)} hold for every torus
knot.
\end{proposition}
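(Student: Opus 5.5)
The plan is to treat the three candidates separately, taking $K=T(p,q)$ with $2\le p<q$ and $\gcd(p,q)=1$; mirroring changes none of the quantities involved, so this choice of representative loses nothing.

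\textbf{(c2) and (c3).} These are already contained in Corollary~\ref{cor:c2c3genus}. There $\br(K)=p$ and $g(K)=(p-1)(q-1)/2$ give $\br(K)\le g(K)+1$. Proposition~\ref{prop:c2c3gc} then yields
\[
2\br(K)-2\le 2\gcan(K)\le\tr(K)
\qquad\text{and}\qquad
2\br(K)-2\le 2\gcan(K)\le 2c_\Delta(K).
\]
So I will simply cite that corollary.

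\textbf{(c7).} The upper side comes from the Kaplan--Krcatovich--O'Brien evaluation recalled just before the statement:
\[
\td(T(p,q))=(p-1)(q-1)=2g(T(p,q)).
\]
It therefore suffices to prove $\spV(T(p,q))\le 2(p-1)(q-1)$; the ceiling is harmless because the right-hand side is an even integer. For this I will use Jones' classical formula
\[
V_{T(p,q)}(t)=t^{(p-1)(q-1)/2}\,\frac{1-t^{p+1}-t^{q+1}+t^{p+q}}{1-t^{2}}.
\]
The numerator factors as $(1-t^{p+1})(1-t^{q+1})+t^{p+q}(1-t^{2})$. The span is bounded by the span of the numerator minus $2$, which is at most $p+q-2$.

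I then need $p+q-2\le 2(p-1)(q-1)$, that is,
\[
2pq-3p-3q+4\ge 0.
\]
This holds for $p\ge2$ and $q\ge3$: the expression equals $q-2\ge1$ when $p=2$, and it is increasing in $p$ since $2q-3>0$. So $\lceil\spV/2\rceil\le (p+q-2)/2\le\td$.

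\textbf{Main obstacle.} The step that needs care is the span bound for the quotient. The quotient is an honest polynomial whose lowest term is $1$ and whose highest degree is at most $p+q-2$. Dividing by $1-t^2$ reduces the degree by exactly $2$ and keeps the constant term $1$, so the span of the quotient is at most $(p+q)-2$. In fact the actual span is $p+q-2$; the monomial prefactor $t^{(p-1)(q-1)/2}$ does not affect the span.

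Sign and mirror conventions only reverse $t\mapsto t^{-1}$, which preserves the span. As a sanity check, the trefoil gives span $3$, $\lceil 3/2\rceil=2=\td$, consistent with (c7) being an equality there.
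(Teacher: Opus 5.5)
Your proof is correct and follows the paper's route: (c2) and (c3) come from Corollary~\ref{cor:c2c3genus}, and (c7) combines Jones's formula, which gives $\spV(T(p,q))=p+q-2$, with $\td\ge 2g=(p-1)(q-1)$. For that depth bound the paper needs only relation~27, not the Kaplan--Krcatovich--O'Brien exact value, and it closes with the intermediate bound $p+q-3$ and $(p-2)(q-2)\ge0$ where you use $2pq-3p-3q+4\ge0$. One slip: your closing chain $\lceil\spV/2\rceil\le(p+q-2)/2$ is false whenever $p+q$ is odd (your own trefoil check gives $2>3/2$); it should read $\lceil\spV/2\rceil\le(p-1)(q-1)=\td$, which is exactly what your earlier integrality reduction already delivers.
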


\begin{proof}
The assertions (c2) and (c3) are already contained in
Corollary~\ref{cor:c2c3genus}. It remains to prove (c7).
The quantities involved are invariant under mirroring, so take $K=T(p,q)$
with $2\le p<q$ and $\gcd(p,q)=1$. The standard positive braid surface and
Cromwell's theorem give $2g(K)=(p-1)(q-1)$ \cite{Cromwell89}.
For (c7), Jones's formula \cite{Jones87}
\[
V_{T(p,q)}(t)\ =\ t^{(p-1)(q-1)/2}\,
\frac{1-t^{p+1}-t^{q+1}+t^{p+q}}{1-t^{2}}
\]
exhibits the Jones polynomial as $t^{(p-1)(q-1)/2}$ times a polynomial in $t$
with constant term $1$ and leading term $-t^{\,p+q-2}$; hence
$\spV(T(p,q))=p+q-2$. Since $p+q\ge 5$, we have
$\lceil (p+q-2)/2\rceil \le p+q-3$, and
\[
(p-1)(q-1)-(p+q-3)\ =\ (p-2)(q-2)\ \ge\ 0
\]
gives $p+q-3\le 2g(K)$. Therefore, using relation $27$ once more,
\[
\Big\lceil \tfrac{\spV(K)}{2}\Big\rceil\ \le\ p+q-3\ \le\ 2g(K)\ \le\ \td(K).
\]
For the trefoil $T(2,3)$ the final chain is an equality. The earlier
(c2) and (c3) assertions also hold with equality there, consistently with the
certified equality witness for (c3) recorded above.
\end{proof}

\begin{proposition}\label{prop:positive}
For every positive knot $K$ (that is, a knot admitting a diagram all of whose
crossings are positive), candidates \textup{(c8)} and \textup{(c9)} hold with
equality:
\[
2|\tau(K)|\ =\ |s(K)|\ =\ \degPz(K)\ =\ 2g(K).
\]
\end{proposition}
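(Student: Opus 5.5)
The plan is to route all four quantities through the genus $g(K)$, using two inputs for positive knots: the homogeneous-knot data of Lemma~\ref{lem:homogeneous-data}, and the known equality of the slice genus with the Seifert genus.

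First, a positive diagram is homogeneous in the sense of Definition~\ref{def:homogeneous}, because every edge of its signed Seifert graph carries the sign $+$. Lemma~\ref{lem:homogeneous-data} therefore applies and gives
\[
\degPz(K)=2g(K).
\]
It also shows that the canonical Seifert surface of a positive diagram has minimal genus.

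Next I will show $g_4(K)=g(K)$ for a positive knot. This is the main step. One route is Rudolph's theorem that positive knots are strongly quasipositive. Then the Bennequin-type equalities give $\tau(K)=g_4(K)=g(K)$, by Livingston or Hedden, and $s(K)=2g_4(K)=2g(K)$, by Rasmussen's computation for strongly quasipositive knots, or Shumakovitch for positive knots. The more direct route is to apply Rasmussen's and Ozsv\'ath--Szab\'o's results for positive diagrams. These state that $s=2g$ and $\tau=g$, computed from the canonical surface as $c(D)-o(D)+1=2g$, where $o(D)$ is the number of Seifert circles; the identity holds because the canonical surface is minimal. In either version the slice-Bennequin inequality supplies the lower bound $s\ge c(D)-o(D)+1$. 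Combining it with relations $41$ and $42$ of Figure~\ref{fig:graph} gives
\[
|s|\le 2g_4\le 2g=c(D)-o(D)+1\le s,
\]
and the analogous chain for $2\tau$. Hence
\[
|s|=2|\tau|=2g_4=2g.
\]

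I expect the obstacle to be the sign conventions. Under Convention~\ref{conv:cat}, a positive knot has $\tau=g>0$ and $s=2g>0$, just as $T(2,3)$ does. Since the statement only involves absolute values, mirroring does not matter. I will still cite the slice-Bennequin inequality in the matching convention, so that the lower bound is applied to $s$ and $\tau$ rather than to their negatives.

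Combining the two displays gives $2|\tau(K)|=|s(K)|=\degPz(K)=2g(K)$. Equality therefore holds in both (c8) and (c9).
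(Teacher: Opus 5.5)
Your proposal is correct and follows essentially the same route as the paper. In both, homogeneity of the positive diagram and Lemma~\ref{lem:homogeneous-data} give $\degPz(K)=2g(K)$, Rasmussen's theorem gives $s=2g_4=2g$, and an imported positivity result gives $\tau=g_4=g$. For that last step the paper cites Abe's characterisation, while you use Rudolph's strong quasipositivity with Livingston (or the slice--Bennequin sandwich), an equally standard source. One small correction: Rasmussen's computation is the one for positive knots, and the extension to strongly quasipositive knots is due to Shumakovitch and Plamenevskaya, so the attributions in your first route are swapped.
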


\begin{proof}
Rasmussen proved $s(K)=2g_4(K)=2g(K)$ for positive knots
\cite{Rasmussen}. Abe's characterisation of positive knots gives, in addition,
$\tau(K)=s(K)/2=g_4(K)=g(K)$
\cite[Theorem~1.3 in the arXiv version]{Abe11}.
A positive diagram is homogeneous, so
Lemma~\ref{lem:homogeneous-data}, using Cromwell's theorem and Morton's bound,
gives $\degPz(K)=2g(K)$. These established equalities prove the assertion.
\end{proof}

\begin{remark}
Proposition~\ref{prop:positive} supplies theorem-level equality witnesses for
(c8) and (c9): every positive row of \textsf{NewDB5} on which the columns
$2|\tau|$, $|s|$ and $\degPz$ are exact is a certified equality witness for
both candidates simultaneously.
\end{remark}

\begin{proposition}\label{prop:c8c9hom}
Candidates \textup{(c8)} and \textup{(c9)} hold for every homogeneous knot.
More generally, they hold for every knot satisfying $\degPz(K)=2g(K)$.
For a homogeneous knot, equality in (c8) occurs exactly when $g_4(K)=g(K)$
and the Floer slice-genus bound is sharp; similarly, equality in (c9) occurs
exactly when $g_4(K)=g(K)$ and the Rasmussen slice-genus bound is sharp.
\end{proposition}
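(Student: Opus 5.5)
The argument reduces (c8) and (c9) to the chain of established slice-genus bounds, with the hypothesis $\degPz(K)=2g(K)$ serving as the bridge to the three-dimensional genus. We prove the general statement first and then specialise to homogeneous knots.

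\textbf{Step 1 (general inequality).} Assume $\degPz(K)=2g(K)$. Relation $42$ gives $2|\tau(K)|\le 2g_4(K)$, relation $41$ gives $|s(K)|\le 2g_4(K)$, and the classical bound $g_4(K)\le g(K)$ holds by pushing a Seifert surface into $B^4$. In the graph this last bound is obtained by composing relations $19$ and $18$ or $16$; alternatively one can use $\gr\le g$ from Remark~\ref{rem:finite} together with relation $19$. Chaining these gives
\[
2|\tau(K)|\le 2g_4(K)\le 2g(K)=\degPz(K),
\]
and likewise $|s(K)|\le 2g_4(K)\le 2g(K)=\degPz(K)$. This proves (c8) and (c9) under the hypothesis.

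\textbf{Step 2 (homogeneous knots).} For a homogeneous knot, Lemma~\ref{lem:homogeneous-data} gives $\degPz(K)=2g(K)$, so Step 1 applies directly. The slice-genus bounds used in Step 1 are smooth statements, so no category issue of the kind in Remark~\ref{rem:top} arises.

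\textbf{Step 3 (equality characterisation).} For a homogeneous $K$, equality $2|\tau|=\degPz$ in the chain $2|\tau|\le 2g_4\le 2g=\degPz$ holds exactly when both intermediate inequalities are equalities. This means $g_4(K)=g(K)$ together with $|\tau(K)|=g_4(K)$, and the second condition is precisely the statement that the Floer slice-genus bound is sharp. The conditions are necessary because each link of the chain is an inequality, and sufficient because they force the whole chain to collapse. The same argument with $|s|\le 2g_4$ gives the characterisation for (c9).

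The only point requiring care is to avoid routing $g_4\le g$ through the topological relation $34$. Beyond that, the argument is a direct assembly of established arrows.
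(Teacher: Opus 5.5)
Your proof is correct and follows the paper's argument essentially step for step. You use the chains $2|\tau|\le 2g_4\le 2g$ and $|s|\le 2g_4\le 2g$ from relations $42$, $41$ and $g_4\le g$, invoke Lemma~\ref{lem:homogeneous-data} to supply $\degPz=2g$ for homogeneous knots, and read off the equality criterion from the collapse of the chain. One small correction: the graph path from $2g$ to $2g_4$ is relations $16,17,18,19$ (or $16,17,20,21$), not ``$19$ and $18$ or $16$''. Also, relation $34$ has nothing to do with $g_4\le g$, so your closing caution is unnecessary.
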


\begin{proof}
For every knot, relations $42$ and $41$, together with $g_4(K)\le g(K)$, give
\[
2|\tau(K)|\le 2g_4(K)\le 2g(K),\;\;\;
|s(K)|\le 2g_4(K)\le 2g(K).
\]
Thus, the more general assertion follows immediately whenever
$\degPz(K)=2g(K)$.

For homogeneous knots, Lemma~\ref{lem:homogeneous-data} supplies
$\degPz(K)=2g(K)$.
The equality criterion follows because equality at the two ends of either chain
above forces, and is forced by, equality in both intermediate inequalities.
\end{proof}

\begin{remark}[Comparison with Abe's homogeneous-knot results]
Abe proves $2\tau(K)=s(K)$ for homogeneous knots
\cite[Theorem~1.2 and the following discussion in the arXiv version]{Abe11}.
Consequently (c8) and (c9) are equivalent on this class. For a non-trivial
homogeneous knot, equality in either comparison occurs precisely when $K$ or
its mirror is positive. Indeed, equality and
Lemma~\ref{lem:homogeneous-data} give $|\tau(K)|=g(K)$. After mirroring when
necessary, the slice-genus bounds give
$\tau=s/2=g_4=g$, and Abe's positive-knot characterisation applies
\cite[Corollary~1.4 in the arXiv version]{Abe11}.
The converse follows from Proposition~\ref{prop:positive} and mirroring.
\end{remark}

\begin{definition}\label{def:sigmathin}
Call $K$ \emph{simultaneously signature-thin} if its unreduced rational
Khovanov homology is supported on the two diagonals
\[
 Kh^{i,j}(K;\mathbb Q)\ne0\;\;\Longrightarrow\;\;
 j-2i\in\{-\sigma(K)-1,-\sigma(K)+1\},
\]
and its hat knot Floer homology over $\mathbb F_2$ is supported on
\[
 \widehat{HFK}_{M}(K,A;\mathbb F_2)\ne0\;\;\Longrightarrow\;\;
 M-A=\frac{\sigma(K)}2.
\]
Here $i,j$ are the homological and quantum gradings, and $M,A$ are the
Maslov and Alexander gradings. We use $s$ over characteristic zero.
The term fixes both coefficient fields and grading normalisation; support
on an unspecified number of diagonals is not sufficient.
\end{definition}

\begin{lemma}[Signature-thin concordance invariants]\label{lem:thin-values}
A simultaneously signature-thin knot satisfies
$s(K)=-\sigma(K)$ and $\tau(K)=-\sigma(K)/2$.
\end{lemma}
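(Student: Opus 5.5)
The plan is to treat the two invariants separately and in each case read the value off a single homological diagonal, using the structure theorems that express $s$ and $\tau$ through the spectral sequences.

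\emph{The Rasmussen invariant.} I would use Lee's deformation. Over $\mathbb Q$, Lee's spectral sequence starts at $Kh(K;\mathbb Q)$ and converges to a two-dimensional space. That limit is supported in homological degree $0$, with quantum gradings $s(K)\pm1$ (Rasmussen). Since the surviving generators lie in homological degree $i=0$, the support hypothesis of Definition~\ref{def:sigmathin} forces $j=s\pm1\in\{-\sigma-1,-\sigma+1\}$. The two surviving quantum gradings are $s-1$ and $s+1$, which differ by $2$. They must therefore be exactly $-\sigma-1$ and $-\sigma+1$, so $s=-\sigma$.

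A point to check is that the diagonal condition really applies to the $E_\infty$ page. This holds because $E_\infty$ is a subquotient of $E_1=Kh$, and the filtration grading on $E_\infty$ agrees with the quantum grading of the representing classes. The sign conventions are those of Convention~\ref{conv:cat}, and the trefoil is the sanity check: $\sigma=-2$ and $s=2$.

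\emph{The Ozsv\'ath--Szab\'o invariant.} I would use the spectral sequence from $\widehat{HFK}(K)$ to $\widehat{HF}(S^3)\cong\mathbb F_2$, which is supported in Maslov grading $0$. By definition $\tau$ is the Alexander filtration level at which the generator survives. The surviving class comes from a generator of $\widehat{HFK}_M(K,A)$ with $M=0$ and $A=\tau$; more precisely, the generator in filtration level $\tau$ of the associated graded has Maslov grading $0$. The thinness hypothesis $M-A=\sigma/2$ then gives $-\tau=\sigma/2$, that is, $\tau=-\sigma/2$. For the trefoil this gives $\tau=1$, matching Convention~\ref{conv:cat}.

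\emph{Main obstacle.} The delicate step is justifying in the Floer case that the surviving class has a representative in $\widehat{HFK}$ of Maslov grading exactly $0$ at Alexander grading $\tau$. I would argue this as follows:
\begin{enumerate}
\item The differentials in the filtered complex preserve the Maslov grading.
\item The associated graded object is $\widehat{HFK}$, with its Maslov grading.
\item So the $E_\infty$ page is a Maslov-graded subquotient of $\widehat{HFK}$, isomorphic to $\widehat{HF}(S^3)$ in Maslov grading $0$.
\item Its unique nonzero piece sits in filtration level $\tau$ by the definition of $\tau$.
\end{enumerate}

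The Khovanov side is analogous, using the fact that Lee's differential is filtered of degree $\ge 0$ in $q$ and has homological degree $+1$. Finally, I would remark that the conclusion is mirror-compatible. Mirroring negates $\sigma$, $s$ and $\tau$ and reflects both diagonal conditions consistently, so no further convention needs to be checked.
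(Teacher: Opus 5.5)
Your proposal is correct and follows the paper's proof essentially step for step. On the Khovanov side, the two Lee survivors in homological degree $0$ at quantum levels $s\pm1$ are forced onto the diagonals $j=-\sigma\pm1$; on the Floer side, the surviving class of the spectral sequence from $\widehat{HFK}(K)$ to $\widehat{HF}(S^3)$, which has Maslov grading $0$ and Alexander level $\tau$, is substituted into $M-A=\sigma/2$ (one wording fix: the differentials are not Maslov-grading preserving but homogeneous of Maslov degree $-1$, and this homogeneity is what makes $E_\infty$ a Maslov-graded subquotient of $\widehat{HFK}$).
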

\begin{proof}
Lee's filtered deformation and Rasmussen's normalisation give two surviving
knot classes in homological degree zero, at quantum filtration levels
$s-1$ and $s+1$ \cite{Lee05,Rasmussen}. On the stated two Khovanov diagonals,
the associated-graded survivors in that degree can only have levels
$-\sigma-1$ and $-\sigma+1$, forcing $s=-\sigma$.
For knot Floer homology, the spectral sequence from $\widehat{HFK}(K)$ to
$\widehat{HF}(S^3)$ leaves a generator of Maslov grading zero, whose Alexander
filtration level is $\tau(K)$ \cite{OzsSzaGenus,OzsSzaAlt}. The support equation
$M-A=\sigma/2$ therefore gives $\tau=-\sigma/2$.
\end{proof}

\begin{proposition}\label{prop:c8c9thin}
Candidates \textup{(c8)} and \textup{(c9)} hold for every simultaneously
signature-thin knot.
\end{proposition}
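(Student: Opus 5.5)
The plan is to reduce both candidates to the single classical quantity $|\sigma|$, using Lemma~\ref{lem:thin-values}, and then to bound $|\sigma|$ by $\degPz$ through two displayed arrows that pass only through polynomial and signature vertices.

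\emph{Step 1.} Let $K$ be simultaneously signature-thin in the sense of Definition~\ref{def:sigmathin}. Lemma~\ref{lem:thin-values} gives $s(K)=-\sigma(K)$ and $\tau(K)=-\sigma(K)/2$. Taking absolute values,
\[
 |s(K)|\ =\ |\sigma(K)|\ =\ 2|\tau(K)|.
\]
By Convention~\ref{conv:cat}, simultaneously reversing the three sign conventions does not affect these absolute-value identities. So (c8) and (c9) both reduce to the single inequality $|\sigma(K)|\le\degPz(K)$.

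\emph{Step 2.} Chain relation $34$ with relation $40$ of Figure~\ref{fig:graph}:
\[
 |\sigma(K)|\ \le\ \spD(K)\ \le\ \degPz(K).
\]
Relation $34$ comes from Kauffman--Taylor's locally flat bound $|\sigma|\le2\gtop$ combined with Feller's $2\gtop\le\spD$. Relation $40$ comes from the specialisation $a=1$ together with the Alexander--Conway substitution computed in item~$40$.

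\emph{Step 3.} Combining Steps 1 and 2 gives
\[
 2|\tau(K)|=|s(K)|=|\sigma(K)|\le\degPz(K),
\]
which is exactly (c8) and (c9).

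The only point needing care is the warning of Remark~\ref{rem:top}: relation $34$ is a topological statement and must not be composed with smooth four-dimensional vertices. I would check explicitly that no such composition occurs here. The chain in Step 2 never passes through $g_4$ or any other smooth vertex. It compares the signature, a classical concordance-invariant integer, with two polynomial quantities, so the smooth/topological distinction plays no role. The smooth invariants $s$ and $\tau$ enter only through the exact identities of Lemma~\ref{lem:thin-values}, not through any genus bound.

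I expect no serious obstacle: all the substantive input is in Lemma~\ref{lem:thin-values}. It may be worth remarking that, unlike Proposition~\ref{prop:c8c9hom}, this route does not need $\degPz=2g$. It therefore also covers thin knots with a degree drop $\spD<\degPz$. Equality in (c8) and (c9) then holds exactly when $|\sigma|=\spD=\degPz$.
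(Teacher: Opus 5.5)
Your proof is correct and is the paper's argument: Lemma~\ref{lem:thin-values} gives $|s|=2|\tau|=|\sigma|$, and relations $34$ and $40$ then give $|\sigma|\le\spD\le\degPz$. Your side remarks are also accurate, namely that the topological relation $34$ is never composed with a smooth vertex here and that equality holds exactly when $|\sigma|=\spD=\degPz$.
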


\begin{proof}
By Lemma~\ref{lem:thin-values},
\[
|s(K)|=2|\tau(K)|=|\sigma(K)|.
\]
Relations $34$ and $40$ then give
\[
|s(K)|=2|\tau(K)|=|\sigma(K)|\le \spD(K)\le \degPz(K),
\]
which proves (c8) and (c9).
\end{proof}

\begin{corollary}\label{cor:qa}\label{prop:c8c9}
Candidates \textup{(c8)} and \textup{(c9)} hold for every quasi-alternating
knot, and in particular for every alternating knot.
\end{corollary}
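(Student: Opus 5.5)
The plan is to reduce Corollary~\ref{cor:qa} to Proposition~\ref{prop:c8c9thin}. To do so, I will check that every quasi-alternating knot is simultaneously signature-thin in the precise sense of Definition~\ref{def:sigmathin}. Two published thinness theorems supply this. First, Manolescu and Ozsv\'ath show that a quasi-alternating link has reduced Khovanov homology that is $\sigma$-thin: it is free and supported on the single diagonal fixed by the signature. It also has $\widehat{HFK}$ over $\mathbb F_2$ supported on the single line $M-A=\sigma/2$. The second statement is exactly the knot Floer condition of Definition~\ref{def:sigmathin}, once the signature convention of Convention~\ref{conv:cat} has been matched to theirs.

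The Khovanov half requires passing from reduced to unreduced rational homology. For a knot whose reduced Khovanov homology is thin, the standard long exact sequence relating reduced and unreduced theories splits. Over $\mathbb Q$, Shumakovitch's result also gives this for thin knots. Either way, the unreduced group is supported on the two adjacent diagonals $j-2i=-\sigma\pm1$, which is the first condition of the definition. At this step I will carefully check the grading shift between reduced and unreduced theories and the sign convention ($s=-\sigma$ for the right-handed trefoil with $\sigma=-2$). With these checks made, quasi-alternating knots are simultaneously signature-thin, and Proposition~\ref{prop:c8c9thin} yields (c8) and (c9).

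For alternating knots, there are two routes.
\begin{itemize}
\item Non-split alternating links are quasi-alternating (Ozsv\'ath--Szab\'o), so the first part applies.
\item Alternatively, one can use Lee's Khovanov thinness and Ozsv\'ath--Szab\'o's alternating $\widehat{HFK}$ theorem \cite{OzsSzaAlt} directly. Those results give $s=-\sigma$ and $\tau=-\sigma/2$, which again feed into the chain $|\sigma|\le\spD\le\degPz$ of relations $34$ and $40$.
\end{itemize}
The unknot is trivially covered, since all quantities vanish.

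The main obstacle is normalisation bookkeeping rather than any new mathematics. The two diagonals and the line $M-A=\sigma/2$ must be exactly the ones in the cited theorems under the paper's sign conventions. If a convention mismatch appears, I will use the mirror-invariance of the absolute-value statements noted in Convention~\ref{conv:cat}. Mirroring the knot flips $\sigma$, $s$ and $\tau$ simultaneously, so the final inequalities are unaffected.
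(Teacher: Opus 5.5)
Your reduction is the same as the paper's. You use Manolescu--Ozsv\'ath $\sigma$-thinness for quasi-alternating knots, pass from reduced to unreduced rational Khovanov homology, identify their Floer line $\delta=A-M=-\sigma/2$ with $M-A=\sigma/2$, and apply Proposition~\ref{prop:c8c9thin}; alternating knots are covered because they are quasi-alternating.

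One justification is false, although the conclusion it supports is correct. The reduced--unreduced long exact sequence does \emph{not} split over $\mathbb Q$ for thin knots. The right-handed trefoil has reduced homology of rank $\det=3$. Its unreduced rational homology has rank $4$, in bidegrees $(0,1),(0,3),(2,5),(3,9)$, not the rank $6$ a splitting would give. Shumakovitch's splitting $Kh\cong\widetilde{Kh}\otimes\mathbb F_2[X]/(X^2)$ is an $\mathbb F_2$ statement, not a rational one.

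Splitting is not needed. Exactness alone places $Kh^{i,j}(K;\mathbb Q)$ between reduced groups in bidegrees $(i,j\pm1)$. These vanish unless $j\pm1-2i=-\sigma$, which confines the unreduced groups to the two required diagonals. This is precisely how the paper uses the Asaeda--Przytycki sequence.

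Finally, do the grading bookkeeping explicitly rather than relying on your mirroring fallback. The paper notes that the Manolescu--Ozsv\'ath quantum grading is half the integral one of Definition~\ref{def:sigmathin}. Mirroring sends $(i,j)\mapsto(-i,-j)$ and $\sigma\mapsto-\sigma$ simultaneously. It therefore leaves the relation between the supporting diagonals and $\sigma$ unchanged, and cannot repair a genuine convention mismatch.
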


\begin{proof}
Manolescu and Ozsv\'ath prove that quasi-alternating links are Khovanov
homologically $\sigma$-thin over $\mathbb Z$ in the reduced theory and Floer
homologically $\sigma$-thin over $\mathbb F_2$
\cite[Theorems~1 and~2 in the arXiv version]{ManOzs}. For a knot, their
Khovanov grading convention is half the integral quantum grading used in
Definition~\ref{def:sigmathin}; hence reduced $\sigma$-thinness means that
the reduced theory is supported on $j-2i=-\sigma(K)$ in our integral grading.
Over $\mathbb Q$, the reduced--unreduced long exact sequence of
Asaeda--Przytycki \cite{AsaedaPrzytycki}, with its two quantum shifts $\pm1$,
therefore confines the unreduced rational Khovanov homology to
\[
 j-2i\in\{-\sigma(K)-1,-\sigma(K)+1\}.
\]
For knot Floer homology, Manolescu and Ozsv\'ath use
$\delta=A-M=-\sigma(K)/2$, equivalently
$M-A=\sigma(K)/2$. Thus, every quasi-alternating knot is simultaneously
signature-thin in the sense of Definition~\ref{def:sigmathin}, and
Proposition~\ref{prop:c8c9thin} applies. Alternating knots are
quasi-alternating.
\end{proof}

\begin{remark}
Propositions~\ref{prop:c8c9hom} and \ref{prop:c8c9thin} provide two
independent sufficient mechanisms for (c8) and (c9). The first uses the
three- and four-dimensional chain
$2|\tau|,|s|\le 2g_4\le 2g=\degPz$; the second uses
$2|\tau|=|s|=|\sigma|\le \spD\le \degPz$.
Thus, a counterexample to either candidate must be both non-homogeneous and not
simultaneously signature-thin.
\end{remark}

\subsection{Restrictions on counterexamples}

\begin{proposition}\label{prop:filters}
The established graph gives the following necessary conditions for a
counterexample to the remaining candidates.
\begin{enumerate}[label=\textup{(\roman*)},leftmargin=2.4em]
\item A counterexample to (c2) or to (c3) satisfies
$\br(K)\ge \gcan(K)+2$.
\item A counterexample to (c4) satisfies $\gfree(K)>g(K)$; in particular it is
neither fibred nor homogeneous.
\item A counterexample to (c6) satisfies $\cl_4(K)>\gcan(K)\ge g(K)$, and
consequently $\us(K)>\gcan(K)$, $\uc(K)>\gcan(K)$ and $u(K)>\gcan(K)$.
\item A counterexample to (c7) satisfies $\spV(K)\ge 4g(K)+1$.
\item A counterexample to (c8) satisfies $2|\tau(K)|>|\sigma(K)|$, and a
counterexample to (c9) satisfies $|s(K)|>|\sigma(K)|$.
\end{enumerate}
\end{proposition}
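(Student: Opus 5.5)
The plan is to treat each item as the contrapositive of a sufficient condition already proved, or of a short chain of established arrows.

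Item (i) is exactly the contrapositive of Proposition~\ref{prop:c2c3gc}. Suppose $\br(K)\le\gcan(K)+1$. Then (c2) and (c3) both hold. So any counterexample has $\br(K)\ge\gcan(K)+2$, since all quantities are integers.

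For (ii), suppose $\gfree(K)=g(K)$. Relation $27$ then gives $2\gfree=2g\le\td$, so (c4) holds. Hence a counterexample has $\gfree(K)\ne g(K)$, and because $g\le\gfree$ always (relation $6$), this means $\gfree(K)>g(K)$. Fibred and homogeneous knots satisfy $\gfree=g$, by the proofs of Propositions~\ref{prop:c4} and~\ref{prop:c4hom} (Lemma~\ref{lem:homogeneous-data} in the homogeneous case). So a counterexample is neither.

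For (iii), Proposition~\ref{prop:c6} gives (c6) whenever $\cl_4\le\gcan$. A counterexample therefore has $\cl_4>\gcan$, and $\gcan\ge g$ comes from relations $5$ and $6$. The consequences then follow from the established upper bounds on $\cl_4$:
\begin{itemize}
\item relation $12$ gives $\cl_4\le\us$;
\item relation $46$ gives $\cl_4\le\uc$;
\item the chain $12$, $25$, $35$ gives $\cl_4\le\us\le\urw\le u$.
\end{itemize}
Each of $\us,\uc,u$ therefore exceeds $\gcan$.

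For (iv), relation $27$ gives $2g\le\td$. A counterexample to (c7) has $\lceil\spV/2\rceil>\td\ge2g$, that is, $\lceil\spV/2\rceil\ge2g+1$. This is where I expect the only real care to be needed. From $\lceil\spV/2\rceil\ge 2g+1$ one gets only $\spV>4g$, i.e.\ $\spV\ge4g+1$, which uses the ceiling correctly: if $\spV\le 4g$ then $\lceil\spV/2\rceil\le2g$. So the stated bound $\spV\ge4g+1$ is the right conclusion, and no parity of $\spV$ should be invoked. I would double-check this ceiling step rather than claim $4g+2$.

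For (v), I would first reduce both candidates to a comparison with $|\sigma|$. Relations $34$ and $40$ give $|\sigma|\le\spD\le\degPz$. Suppose $2|\tau|\le|\sigma|$. Then $2|\tau|\le\degPz$, so (c8) holds, and likewise for $|s|$ and (c9). Contrapositively, a counterexample to (c8) has $2|\tau(K)|>|\sigma(K)|$, and a counterexample to (c9) has $|s(K)|>|\sigma(K)|$. Relation $34$ is topological, but it is a valid inequality between integer invariants, and here it is used only as such, consistent with Remark~\ref{rem:top}.
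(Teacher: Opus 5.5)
Your proposal is correct and follows the paper's proof essentially step for step. Each item is the contrapositive of, respectively: Proposition~\ref{prop:c2c3gc}; relation~$27$; the chain $2c_\Delta\ge c_3\ge 2\gcan$ from relations~$36$ and~$3$ (which you invoke through Proposition~\ref{prop:c6}); the integer ceiling step combined with relation~$27$; and $|\sigma|\le\spD\le\degPz$ from relations~$34$ and~$40$. The differences are only cosmetic: you exclude fibred and homogeneous knots via $\gfree=g$ rather than by citing Propositions~\ref{prop:c4} and~\ref{prop:c4hom}, and you get $\uc>\gcan$ from relation~$46$ alone.
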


\begin{proof}
Part (i) is Proposition~\ref{prop:c2c3gc}. If (c4) fails, then relation $27$
gives
\[
2\gfree(K)>\td(K)\ge 2g(K),
\]
so $\gfree(K)>g(K)$; Propositions~\ref{prop:c4} and \ref{prop:c4hom} then
exclude fibred and homogeneous knots. If (c6) fails, then relations $36$ and
$3$ give
\[
2\cl_4(K)>2c_\Delta(K)\ge c_3(K)\ge 2\gcan(K)\ge 2g(K),
\]
so $\cl_4(K)>\gcan(K)$; since relations $12$, $25$ and $35$ give
$\cl_4\le \us\le \urw\le u$ and relations $46$ and $31$ give
$\cl_4\le \uc\le u$, the three remaining inequalities of (iii) follow. If (c7)
fails, then
\[
\Big\lceil\frac{\spV(K)}2\Big\rceil>\td(K)\ge 2g(K).
\]
The left-hand side is an integer, hence it is at least $2g(K)+1$, which is
equivalent to $\spV(K)\ge 4g(K)+1$. Finally, relations $40$ and $34$ give
$\degPz(K)\ge \spD(K)\ge |\sigma(K)|$. Therefore, failure of (c8) forces the stated strict inequality involving
$\tau$, while failure of (c9) forces the corresponding inequality involving
$s$.
\end{proof}

\begin{remark}\label{rem:reduce}
Propositions~\ref{prop:c2c3}, \ref{prop:c2c3gc}, \ref{prop:c4},
\ref{prop:c4hom}, \ref{prop:c6}, \ref{prop:c1alt},
\ref{prop:torus}, \ref{prop:positive}, \ref{prop:c8c9hom},
\ref{prop:c8c9thin} and \ref{prop:filters}, together with
Corollaries~\ref{cor:c2c3genus} and \ref{cor:qa} reduce the first
counterexample searches for the retained candidates as follows. For (c1), it
suffices to search among non-alternating knots. For (c2) and (c3), it suffices
to consider knots with bridge number at least four and, more sharply,
$\br\ge \gcan+2$, thereby excluding in particular all connected sums of torus
knots. For (c4), the knot must be both non-fibred and non-homogeneous. For
(c6), it must be a non-torus knot with $\cl_4>\gcan$, hence with
$\us>\gcan$ and $u>\gcan$. For (c7), it must be a non-torus knot with
$\spV\ge 4g+1$. Finally, a counterexample to (c8) or (c9) must be
non-homogeneous and not simultaneously signature-thin; for (c8) it must also
satisfy $2|\tau|>|\sigma|$, and for (c9) it must satisfy
$|s|>|\sigma|$. These are theorem-level filters and do not use the
computational witness counts.
\end{remark}

\appendix
\section{Short definitions of the invariants}\label{app:defs}

\subsection*{Crossing-type and diagrammatic invariants}
\ \\
\noindent\textbf{1. Crossing number $c(K)$.} The crossing number $c(K)$ of a
knot $K$ is the minimum number of ordinary double crossings among all regular
planar diagrams representing the knot. A regular diagram is obtained from a
generic projection of the knot into the plane so that all singularities are
transverse double points equipped with over-under crossing information.

\noindent\textbf{2. Triple-crossing number $c_3(K)$.} A triple-crossing
projection of $K$ is a projection all of whose singularities are triple points,
with no ordinary double crossings. A triple crossing is a point where exactly
three strands intersect transversely, together with a specified height ordering
of the top, middle, and bottom strands. Every knot admits such a projection
\cite{Adams}. The triple-crossing number $c_3(K)$ is the minimum number of
triple crossings among all such projections.

\noindent\textbf{3. Delta-crossing number $c_\Delta(K)$.}
A delta-diagram is an ordinary diagram whose crossings are partitioned into
pairwise disjoint local three-crossing blocks, each equal to the delta-crossing
tangle of Nakanishi--Sakamoto--Satoh, or its reflected version
\cite{NSS15,Jab23}. There are no ordinary crossings outside those blocks.
The number $c_\Delta(K)$ is the minimum number of blocks over such diagrams
representing $K$. The word ``delta'' refers to this specified local tangle,
not to an arbitrary grouping of three ordinary crossings or to a triple point.
Resolving each block as three ordinary crossings gives
$c(K)\le3c_\Delta(K)$.

\noindent\textbf{4. Skein tree depth $\td(K)$.} An ordinary oriented binary
skein tree has a diagram of $K$ at its root and unlink diagrams at its leaves.
At each internal vertex, a crossing is chosen; one child is obtained by
switching that crossing and the other by applying its oriented smoothing. These are the two terms obtained by
solving the HOMFLY--PT skein relation of Convention~\ref{conv:poly} for the
parent. Reidemeister moves and diagram isotopies may be made without cost
between resolutions. The depth is the maximum number of edges in a root-to-leaf
path, counting \emph{both} switching and smoothing edges.
The skein tree depth $\td(K)$ is the minimum of this depth over all such trees
and all diagrams of $K$ \cite{Cromwell89,SchThom,Thompson89,Kaplan15,Jab26}; the same definition
applies to links. No division by a computed polynomial, pruning by cancellation,
or replacement of this two-branch recursion by a different skein relation is
allowed in this definition. The unlink with $r\ge1$ components has polynomial
$((a-a^{-1})/z)^{r-1}$, as used in Theorem~\ref{prop:td-leading}.

\noindent\textbf{5. Trivializing number $\tr(K)$.} Following Hanaki
\cite{Hanaki10,Hanaki14}, start from a knot projection $P$, whose double points
are initially precrossings. A pseudo-diagram $Q$ obtained from $P$ is called
trivial if over/under information has been fixed at some of the precrossings and
every resolution of all remaining precrossings is the unknot. The trivializing
number $\tr(P)$ is the minimum number of crossings whose over/under information
must be fixed in such a trivial pseudo-diagram. If $D$ is a diagram with
underlying projection $P$, set $\tr(D)=\tr(P)$, and define
$\tr(K)=\min\{\tr(D): D \text{ represents } K\}$. Thus, $\tr(K)$ counts the
crossings whose information is prescribed, not the crossings whose information
is forgotten. Hanaki also proved that $\tr(K)$ is even, that $\gcan(K)\le \tr(K)/2$
(relation $28$), and that $\tr(K)\le c(K)-1$ for every non-trivial knot
\cite{Hanaki10,Hanaki14}; the weaker bound $g(K)\le \tr(K)/2$ is \cite{Henrich},
and $u(K)\le \tr(K)/2$ is relation $10$. See Subsection~\ref{sec:next}.

\noindent\textbf{6. Free genus $\gfree(K)$.} The free genus $\gfree(K)$ of a
knot $K$ is the minimum genus among all Seifert surfaces whose complement in
$S^3$ has free fundamental group. Such spanning surfaces are called free
Seifert surfaces; for a standard reference on this invariant see
\cite{Moriah}.

\noindent\textbf{7. Canonical genus $\gcan(K)$.} The canonical genus $\gcan(K)$
is the minimum genus among all Seifert surfaces obtained from Seifert's
algorithm applied directly to knot diagrams of $K$. Every canonical Seifert
surface is free \cite{KobKob}, whence $g\le \gfree \le \gcan$.

\noindent\textbf{8. Genus $g(K)$.} The genus $g(K)$ of a knot $K$ is the minimum
genus among all connected, compact, orientable Seifert surfaces whose boundary
is the knot. A Seifert surface is an orientable surface smoothly embedded in
$S^3$ with boundary equal to $K$.

\noindent\textbf{9. Unknotting number $u(K)$.} The unknotting number $u(K)$ is
the minimum number of crossing changes required to transform the knot into the
unknot. A crossing change replaces a positive crossing with a negative crossing
or vice versa.

\noindent\textbf{10. Band-unknotting number $\ub(K)$.} The band-unknotting
number $\ub(K)$ is the minimum number of oriented band surgeries required to
transform the knot into the unknot. A band surgery removes two small arcs from
the knot and reconnects them using the opposite sides of an embedded band. Since
an oriented band surgery changes the number of components by one, $\ub(K)$ is
even.

\noindent\textbf{11. Band-unlinking number $\ulb(K)$.} The band-unlinking number
$\ulb(K)$ is the minimum number of oriented band surgeries required to transform
the knot into a completely split unlink. With this convention, $\ulb(K)$ is
McDonald's band number $b(K)$ \cite{McDonald}.

\noindent\textbf{12. Clasp number $\cl(K)$.}
Following Shibuya \cite[Definitions~2--3]{Shibuya}, the clasp number is the
minimum, over all \emph{clasp disks}, of the number of clasp singularities. A
clasp disk is a singular spanning disk in $S^3$ whose boundary maps
homeomorphically onto $K$, whose only singularities are
finitely many ordinary clasp double arcs, and which have no ribbon double
arcs, closed double curves, or triple points. For a clasp double arc, its two
preimage arcs in the disk each have one endpoint on the boundary and one in
the interior, in the standard transverse clasp model. Requiring all
singularities to be clasps is essential. Allowing arbitrary singular disks
and counting only their clasp arcs would give a different quantity, for
which relations $4$ and $29$ need not hold.

\noindent\textbf{13. Bridge number $\br(K)$.} The bridge number $\br(K)$ is the
minimum number of local maxima of the knot with respect to a Morse height
function on $S^3$, minimised over all embeddings isotopic to the knot.
Equivalently, it is the minimum number of bridges appearing in a bridge
decomposition of the knot.

\noindent\textbf{14. Braid index $b(K)$.} The braid index $b(K)$ is the minimum
number of strands required to represent the knot as the closure of an element of
the Artin braid group. Explicitly,
$b(K)=\min\{n: K \text{ is the closure of an } n\text{-braid}\}$.

\noindent\textbf{15. Arc index $\alpha(K)$.} The arc index $\alpha(K)$ is the
minimum number of arcs needed in an arc presentation of the knot. Equivalently,
it is the smallest number of half-planes in an open-book decomposition of $S^3$
whose pages contain embedded arcs forming the knot.

\noindent\textbf{16. Mosaic number $m(K)$.} The mosaic number $m(K)$ is the
minimum integer $n$ such that the knot admits a representation as an
$n\times n$ knot mosaic. A knot mosaic is a finite grid of prescribed tiles
assembled according to compatibility rules that encode the knot diagram
combinatorially. We use the standard tile set and matching convention of
\cite{LHLO}.

\noindent\textbf{17. Ascending number $a(K)$.} Following Ozawa
\cite{Ozawa}, the ascending number $a(K)$ is the minimum, over all based
oriented diagrams of $K$, of the number of crossing changes needed to
transform the diagram into a descending diagram. A descending diagram is one
in which, after choosing a base point and orientation, every crossing is first
encountered as an overcrossing. Such diagrams always represent the unknot.

\subsection*{Four-dimensional and concordance invariants}
\ \\
\noindent\textbf{18. Four-dimensional clasp number $\cl_4(K)$.}
This is the minimum number of transverse double points of a smooth proper
self-transverse immersion $f:D^2\looparrowright B^4$ such that
$f^{-1}(S^3)=\partial D^2$, the restriction $f|_{\partial D^2}$ is an embedding
onto $K$, and there is an embedded standard boundary collar. Its only multiple
points are isolated transverse interior double points. The \emph{map} is smooth also at their preimages;
only its image fails to be embedded. Resolving each double point by an
oriented tube gives $g_4(K)\le\cl_4(K)$ \cite{MurYas,OwensStrle}.

\noindent\textbf{19. Slice genus $g_4(K)$.} The slice genus $g_4(K)$, also
called the smooth four-ball genus, is the minimum genus of a smooth, compact,
connected, orientable surface properly embedded in $B^4$ whose boundary is the
knot $K\subset S^3=\partial B^4$. The invariant measures how efficiently the
knot bounds a smooth surface in four dimensions rather than in the three-sphere.
The locally flat analogue $\gtop(K)$ of Convention~\ref{conv:cat} is used only as
an auxiliary quantity in the justification of relation $34$; it is not a vertex
of Figure~\ref{fig:graph}.

\noindent\textbf{20. Knot signature $\sigma(K)$.} The knot signature $\sigma(K)$
is the signature of the symmetrised Seifert form associated with a Seifert
surface for the knot. If $V$ is a Seifert matrix, then
$\sigma(K)=\sgn(V+V^{T})$, where the signature denotes the number of positive
eigenvalues minus the number of negative eigenvalues; the Seifert-form
orientation convention is the one fixed in Convention~\ref{conv:cat}. For knots, the signature is even; it is an integer-valued concordance invariant independent of the chosen
Seifert surface.

\noindent\textbf{21. Doubly slice genus $\gds(K)$.}
This is the minimum genus of a smooth, connected, closed, orientable,
\emph{unknotted} surface $F\subset S^4$ whose transverse intersection with an
equatorial $S^3$ is $K$. Here unknotted means that $F$ bounds a smoothly
embedded three-dimensional handlebody in $S^4$ \cite{LivMeier,McDonald}.
Since $F$ is connected and its intersection with the equator is one circle,
cutting along the equator produces two connected properly embedded surfaces
with boundary $K$ (with opposite boundary orientations), and their genera add
to $g(F)$. Each has genus at least $g_4(K)$; hence
$\gds(K)\geq2g_4(K)$.

\noindent\textbf{22. Ribbon genus $\gr(K)$.}
This is the minimum genus of a smooth, compact, connected, orientable surface
properly embedded in $B^4$ with boundary $K$, for which the radial function can
be chosen Morse with only index-zero and index-one interior critical points.
Such a surface is ribbon; its handle presentation consists of disks and bands,
with no maxima \cite{Shibuya,JMZ}. In particular, $\gr(K)=0$ exactly when $K$
bounds a ribbon disk.

\noindent\textbf{23. Weak ribbon unknotting number $\urw(K)$.}
For the knots considered here, $\urw(K)$ is the minimum number of crossing
changes taking $K$ to a knot bounding a smooth ribbon disk in $B^4$.
This is the knot case of Shibuya's weak ribbon unknotting number
\cite[Definition~5 and Lemma~5]{Shibuya}: his genus-zero ribbon singular
spanning surface, when it has one boundary component, is a disk, and the usual
push-in construction gives an embedded ribbon disk in $B^4$. No assertion
about the equivalence of ribbon and weak ribbon \emph{links} is needed here.
Thus, $\urw(K)=0$ precisely for ribbon knots; $\urw$ is a crossing-change
distance and is not the number of ribbon singularities of a disk.
The two-oriented-band realisation of a crossing change
\cite[Section~1.4]{JMZ}, capped by a ribbon disk at the other end, also
explains directly why $\gr(K)\le\urw(K)$.

\noindent\textbf{24. Slicing number $\us(K)$.} The slicing number $\us(K)$ is
the minimum number of crossing changes required to transform the knot into a
slice knot \cite{OwensStrle}. A slice knot is a knot bounding a smoothly
embedded disk in the four-ball. Thus, the slicing number measures the distance
from the knot to the slice concordance class rather than to the unknot itself.

\noindent\textbf{25. Concordance unknotting number $\uc(K)$.} The concordance
unknotting number is
\[
 \uc(K)=\min\{u(J):J\sim K\},
\]
where $\sim$ denotes smooth knot concordance \cite{OwensStrle}. The invariant
measures the smallest unknotting number achievable within the smooth
concordance class of the knot rather than for the knot itself.

\subsection*{Quantities from knot homologies}
\ \\
\noindent\textbf{26. Torsion order $\Ord_v(K)$.} Following Juh\'asz, Miller
and Zemke \cite[Definition~1.1]{JMZ}, let $K\subset S^3$ be a knot, and let
$HFK^-(K)$ denote the minus version of knot Floer homology, viewed as a
finitely generated module over $\mathbb{F}_2[v]$. The $v$-torsion submodule of
$HFK^-(K)$ is
\[
\Tors_v\!\left(HFK^-(K)\right)
=\{x\in HFK^-(K): v^m x = 0 \text{ for some } m\ge 1\}.
\]
The torsion order of $K$ is defined by
\[
\Ord_v(K)=\min\left\{n\ge 0: v^{n}\cdot \Tors_v\!\left(HFK^-(K)\right)=0\right\}.
\]
Equivalently, $\Ord_v(K)$ is the smallest integer $n$ such that multiplication by
$v^{n}$ annihilates all $v$-torsion elements in $HFK^-(K)$.

\noindent\textbf{27. Ozsv\'ath--Szab\'o invariant $\tau(K)$.}
Let $\mathcal F_j\widehat{CF}(S^3)$ be the Alexander-filtered subcomplex
determined by $K$. Then $\tau(K)$ is the least integer $j$ for which its
inclusion induces a nonzero map on homology to
$\widehat{HF}(S^3)\cong\mathbb F_2$ \cite{OzsSzaGenus}. This is an integer-valued smooth concordance invariant, with
$\tau(T(2,3))=1$ in our convention. The tabulated values imported for the
$2|\tau|$ column originate in Livingston's computations and their
successors \cite{Livingston04,KnotInfo}.

\noindent\textbf{28. Rasmussen invariant $s(K)$.}
Over $\mathbb Q$, the two-dimensional Lee homology of a knot carries quantum
filtration levels $s_{\min}(K)$ and $s_{\max}(K)$, with
$s_{\max}-s_{\min}=2$. Rasmussen defines
$s(K)=(s_{\min}+s_{\max})/2$ \cite{Lee05,Rasmussen}. It is an even integer and a
smooth concordance invariant; our normalisation has $s(T(2,3))=2$.

\subsection*{Quantities from knot polynomials}
\ \\
\noindent\textbf{29. Span of the Alexander polynomial $\spD(K)$.} The span of
the Alexander polynomial is defined by
$\spD(K)=\deg_{\max}\Delta_K(t)-\deg_{\min}\Delta_K(t)$, where $\Delta_K(t)$ is
the Alexander polynomial of the knot. The invariant is the total width of the
exponents appearing in the Alexander polynomial and gives a numerical measure
of algebraic complexity. Since the Alexander polynomial is symmetric up to
multiplication by powers of $t$, the span is independent of normalisation
choices.

\noindent\textbf{30. Span of the Jones polynomial $\spV(K)$.} The span of the
Jones polynomial is defined by $\spV(K)=\deg_{\max}V_K(t)-\deg_{\min}V_K(t)$,
where $V_K(t)$ denotes the Jones polynomial of the knot. By the
Kauffman--Murasugi--Thistlethwaite theorem
\cite{Kauffman87,Murasugi87,Thistlethwaite}, one has $\spV(K)\le c(K)$.
For knots, equality holds exactly for alternating knots; the converse follows
from Murasugi's characterisation \cite[Theorem~4]{Murasugi87}. See
Remark~\ref{rem:kmt} for the distinction between this knot-type statement and
the stronger diagram-level assertion, which requires a primeness hypothesis.

\noindent\textbf{31. HOMFLY-PT $z$-degree $\degPz(K)$.} The invariant
$\degPz(K)$ denotes the maximal degree in the variable $z$ appearing in the
HOMFLY-PT polynomial $P_K(a,z)$, in the normalisation of
Convention~\ref{conv:poly}. The HOMFLY-PT polynomial is characterised by its
skein relation and simultaneously generalises both the Alexander and Jones
polynomials.

\noindent\textbf{32. HOMFLY-PT $a$-spread $\spPa(K)$.} The HOMFLY-PT $a$-spread
is defined by
$\spPa(K)=\deg^{a}_{\max}P_K(a,z)-\deg^{a}_{\min}P_K(a,z)$.

\noindent\textbf{33. Kauffman polynomial $a$-spread $\spFa(K)$.}
We use the ambient-isotopy, two-variable Kauffman polynomial, normalised by
$F_U(a,z)=1$, in the convention of \cite{MortonBeltrami,KnotInfoPolynomials}.
It is the regular-isotopy Kauffman polynomial of an oriented diagram multiplied
by its writhe-correcting monomial in $a$. Define
$\spFa(K)=\deg^a_{\max}F_K-\deg^a_{\min}F_K$.
The writhe correction, mirroring, and the usual original/Dubrovnik convention
change do not affect this spread. This quantity is not the Jones polynomial
or the HOMFLY--PT $a$-spread.

\section{Machine-readable form of Figure~\ref{fig:graph}}\label{app:table}

A picture is not a machine-readable input for the propagation, so we record the
arrow set of Figure~\ref{fig:graph} here as well, with columns
\texttt{(id, source, target, reference)} in the source/target convention
$\texttt{source}\ \ge\ \texttt{target}$ of Section~\ref{sec:network}. Viewed as a directed
graph, the arrow set in Table~\ref{tab:rel} is acyclic, and no arrow is implied
by the remaining $46$:
every displayed relation carries information that is not recoverable from the
others by transitivity alone. Table~\ref{tab:rel} and Figure~\ref{fig:graph}
encode the same relations, including the orientation of every arrow.

For the nine candidates of Section~\ref{sec:candidates}, orient each
proposed edge from its right-hand side to its left-hand side. Adjoining
these nine edges leaves the graph acyclic, and deleting any one of them
leaves no directed path from its source to its target. This is joint
transitivity independence relative to the selected $47$ arrows, not
validity as knot inequalities. The $47$ established arrows are irredundant within their own graph.
They need not remain so after adding proposed edges, and in fact exactly two
of them do not: adjoining all nine candidate edges makes arrow $27$
($\td\ge 2g$) redundant, through
$\td\ge 2\gfree\ge 2g$ using (c4) and relation $6$, and makes arrow $30$
($2\urw\ge 2\gr$) redundant, through $2\urw\ge 2\us\ge 2\gr$ using relation
$25$ and (c5). This is a statement about the directed graph only. Neither
composite is available as a proof, since (c4) and (c5) are unproved in
general; in particular the appeals to relation $27$ in
Propositions~\ref{prop:c4}, \ref{prop:c4hom}, \ref{prop:torus} and
\ref{prop:filters}, and the appeal to relation $30$ before
Proposition~\ref{prop:c5}, are not circular. No candidate edge is used in the
propagation that constructs \textsf{NewDB5}.

\begin{table}[htbp]
\centering\footnotesize
\begin{tabular}{clll}
\toprule
id & source & target & ref. \\
\midrule
$1$ & $c$ & $\td$ & \cite{Cromwell89} \\
$2$ & $c$ & $c_3$ & \cite{Adams} \\
$3$ & $c_3$ & $2\gcan$ & \cite{Jab20} \\
$4$ & $2\cl$ & $2g$ & \cite{Shibuya} \\
$5$ & $2\gcan$ & $2\gfree$ & \cite{KobKob} \\
$6$ & $2\gfree$ & $2g$ & \cite{Moriah} \\
$7$ & $2\gcan$ & $\degPz$ & \cite{Morton} \\
$8$ & $2g$ & $\spD$ & \cite{Giller} \\
$9$ & $c_3$ & $\lceil\spV/2\rceil$ & \cite{Adams} \\
$10$ & $\tr$ & $2u$ & \cite{Henrich} \\
$11$ & $2\cl_4$ & $2g_4$ & \cite{MurYas} \\
$12$ & $2\us$ & $2\cl_4$ & Section~\ref{sec:network}, item~12 \\
$13$ & $2g_4$ & $|\sigma|$ & \cite{KaufTaylor} \\
$14$ & $\alpha-2$ & $2b-2$ & \cite{Cromwell95} \\
$15$ & $2u$ & $\ub$ & \cite{HNT} \\
$16$ & $2g$ & $\ub$ & \cite{JMZ} \\
$17$ & $\ub$ & $\ulb$ & \cite{JMZ} \\
$18$ & $\ulb$ & $2\gr$ & \cite{JMZ} \\
$19$ & $2\gr$ & $2g_4$ & Section~\ref{sec:network}, item~19 \\
$20$ & $\ulb$ & $\gds$ & \cite{McDonald} \\
$21$ & $\gds$ & $2g_4$ & \cite{LivMeier} \\
$22$ & $2b-2$ & $2\br-2$ & \cite{Schubert} \\
$23$ & $2b-2$ & $\spPa$ & \cite{FranksWilliams,Morton} \\
$24$ & $c$ & $2a$ & \cite{Ozawa} \\
\bottomrule
\\
\end{tabular}
\;\;\;
\begin{tabular}{clll}
\toprule
id & source & target & ref. \\
\midrule
$25$ & $2\urw$ & $2\us$ & \cite{Shibuya} \\
$26$ & $c$ & $\tr$ & \cite{Hanaki14} \\
$27$ & $\td$ & $2g$ & \cite{SchThom} \\
$28$ & $\tr$ & $2\gcan$ & \cite{Hanaki14} \\
$29$ & $2\cl$ & $2u$ & \cite{Shibuya} \\
$30$ & $2\urw$ & $2\gr$ & \cite{Shibuya} \\
$31$ & $2u$ & $2\uc$ & \cite{OwensStrle} \\
$32$ & $2a$ & $2u$ & \cite{Ozawa} \\
$33$ & $2a$ & $2\br-2$ & \cite{Ozawa} \\
$34$ & $\spD$ & $|\sigma|$ & \cite{Feller,KaufTaylor} \\
$35$ & $2u$ & $2\urw$ & \cite{Shibuya} \\
$36$ & $2c_\Delta$ & $c_3$ & \cite{Jab23} \\
$37$ & $\alpha-2$ & $\spFa$ & \cite{MortonBeltrami} \\
$38$ & $\alpha-2$ & $m-1$ & \cite{LHLO} \\
$39$ & $\td$ & $\degPz$ & \cite{Jab26} \\
$40$ & $\degPz$ & $\spD$ & Section~\ref{sec:network}, item~40 \\
$41$ & $2g_4$ & $|s|$ & \cite{Rasmussen} \\
$42$ & $2g_4$ & $2|\tau|$ & \cite{OzsSzaGenus} \\
$43$ & $2c_\Delta$ & $\td$ & \cite{Jab26} \\
$44$ & $\ulb$ & $\Ord_v$ & \cite{JMZ} \\
$45$ & $c$ & $\alpha-2$ & \cite{BaePark} \\
$46$ & $2\uc$ & $2\cl_4$ & \cite{OwensStrle} \\
$47$ & $2\br-2$ & $\Ord_v$ & \cite{JMZ} \\
\bottomrule
\\
\end{tabular}
\caption{The $47$ arrows of Figure~\ref{fig:graph}, in the convention
$\text{source}\ge\text{target}$.}
\label{tab:rel}
\end{table}

\subsection*{Data Availability}

The database \textsf{NewDB5} is available in the author's database archive 
\url{https://drive.google.com/drive/folders/1mJk2DDfyKQC7XpIRDOYj4YELWWJ9FN1C}.

\section*{Acknowledgements}

The author used OpenAI's ChatGPT for editorial assistance, literature searches, exploratory mathematical checking, and drafting computational verification code.

\end{document}